\newtheorem{thm}{Theorem}[section]
\newtheorem{cor}[thm]{Corollary}
\newtheorem{prop}[thm]{Proposition}
\newtheorem{defn}[thm]{Definition}
\newtheorem{exam}[thm]{Example}
\newtheorem{rem}[thm]{Remark}
\numberwithin{equation}{section}
\begin{document}

\title [On some solvable Leibniz algebras and their completeness]{On some solvable Leibniz algebras and their completeness}

\author{K.K. Abdurasulov$^{1}$, B.A. Omirov$^{2}$, I.S. Rakhimov$^{1,3}$}

\address{$^1$ Institute of Mathematics Uzbekistan Academy of Sciences, Tashkent, Uzbekistan} \email{abdurasulov0505@mail.ru}

\address{$^2$ National University of Uzbekistan, Tashkent, Uzbekistan} \email{omirovb@mail.ru}

\address{$^3$ Faculty of Computer and Mathematical Sciences, Universiti Teknologi MARA (UiTM), Shah Alam, Malaysia $\&$ Institute of Mathematics Uzbekistan Academy of Sciences, Tashkent, Uzbekistan }
\email{isamiddin@uitm.edu.my}

\begin{abstract} The paper is devoted studying solvable Leibniz algebras with a nilradical possessing the codimension equals the number of its generators. We describe this class in non-split nilradical case. Then the case of split nilradical is worked out. We show that the results obtained earlier on this class of Leibniz algebras come as particular cases of the results of this paper. It is shown that such a solvable extension is unique. Finally, we prove that the solvable Leibniz algebras considered are complete.
\end{abstract}

\subjclass[2010]{17A32, 17A36, 17B30.}

\keywords{Leibniz algebra, solvable algebra, nilradical, derivation. complete algebra}

\maketitle

\section{Introduction}

During the last 25 years the theory of Leibniz algebras has been a topic of active research. Some (co)homology and deformations properties; relations with $R$-matrices and Yang-Baxter equations; result on various types of decompositions; structure of semisimple, solvable and nilpotent Leibniz algebras; classifications of some classes of graded nilpotent Leibniz algebras were obtained in numerous papers devoted to Leibniz algebras. In fact, many results of theory of Lie algebras have been extended to the Leibniz algebras case. For instance, the classical results on Cartan subalgebras, Levi's decomposition, properties of solvable algebras with a given nilradical and others from the theory of Lie algebras are also true for Leibniz algebras. Due to the lack of non-antisymmetricity, however, matters will be not quite as simple as in the Lie algebras case. Nevertheless, many results from the case of Lie algebras are extended to Leibniz algebras.
In 2012 D. Barnes \cite{Barnes2} proved an analogue of Levi-Malcev Theorem\index{Levi-Malcev theorem} for Leibniz algebras. The author proved that a finite-dimensional Leibniz algebra is decomposed into a semidirect sum of its solvable radical and a semisimple Lie algebra. Therefore, the main problem of the description of finite-dimensional Leibniz algebras comes to the study of solvable Leibniz algebras. It was also pointed out there the non-uniqueness of the semisimple subalgebra appeared in the analogue of the Levi-Malcev theorem. Note that in the case of Lie algebras the semisimple part (factor Levi) is unique up to conjugation via an inner automorphism. However, the conjugation in the case of Leibniz algebras is no longer true, in general. Recently, in \cite{KuLO} the authors studied the conditions for the semisimple parts of the decomposition to be conjugated. Recall that A.I. Malcev proved that a solvable Lie algebra is uniquely determined by its nilradical \cite{Malcev} and some special kinds derivations of the nilradical. Mubarakzjanov proposed a method, which used the property that the dimension of the subspace complementary to the nilradical does not exceed the number of nil-independent derivations and as well as the number of generators of the nilradical \cite{mubor1}. Similar to that of the Lie algebras case, it is known that any non-nilpotent solvable Leibniz algebra can be constructed by means of its nilradical and nil-independent derivations of the nilradical \cite{Omirov}.

With a given Leibniz algebra we can associate a few Lie algebras, they are: the quotient algebra by the ideal generated by the squires (Lie core); the algebra of all right multiplication operators with the commutator operation; the quotient algebra by the ideal of right annihilators etc. Some of properties of the Leibniz algebras can be stated in terms of these algebras. For instance, the solvability of the Leibniz algebra is equivalent to the solvability of each of the Lie algebras mentioned. In \cite{Ernie} the concept of completeness of a Leibniz algebra was defined by means of its Lie core.

 Note that the square of a solvable Leibniz algebra is contained in its nilradical. Therefore, it is natural to use the nilradical for constructing a solvable algebra.
For a solvable finite-dimensional Leibniz algebra with a given nilradical the restriction of the right multiplication operator on elements of the subspace  complementary to the nilradical is non-nilpotent outer derivation of the nilradical. Therefore, the dimension of the subspace complementary to the nilradical is not greater than maximal number of nil-independent outer derivations of the nilradical of the solvable Leibniz algebra. This is one of the important reasons to study the derivations of nilpotent Leibniz algebras (especially, outer derivations or the first cohomology group).

In the case of Lie algebras to classify the solvable part a result of the paper \cite{mubor1} has been used (see \cite{AnCaGa}, \cite{CampoamorStursberg3}, \cite{NdogmoWinternitz}, \cite{SnoblWinternitz}, \cite{snoble} and \cite{WangDeng}). In the case of Leibniz algebras the analogue of Mubarakzjanov's \cite{mubor1} results has been also successfully implemented in \cite{4dim}, \cite{Omirov},   \cite{Filiform}, \cite{5dim} and \cite{Shab}.





The paper is organized as follows. In Preliminaries Section we give some definitions and background results needed in this study. Section 3 contains the description of solvable Leibniz algebras with maximal possible codimension of the nilradical. In Section 4 we prove the uniqueness of the solvable Leibniz extensions of the nilradical with the maximal possible codimension. Some comparisons of the results in this paper with those obtained earlier are given in Section 5. Finally, in Section 6 the completeness of the solvable Leibniz algebras studied is proved.

Throughout the paper we consider finite-dimensional vector spaces and algebras over the field of complex numbers. Moreover, in the table multiplications of algebras the products omitted are assumed to be zero.

\section{Preliminaries}
A Leibniz algebra\index{Leibniz algebra} over a field $\mathbb{F}$ is a vector space equipped with a $\mathbb{F}$-bilinear
map $[\cdot,\cdot]:L \times L\longrightarrow L$ satisfying the Leibniz identity
 \begin{equation} \label{Right} {\mathcal L}(x, y, z)=[[x,y],z]- [[x,z],y]-[x,[y,z]]=0,\end{equation} \index{Right Leibniz algebra identity}
for all $x, y, z\in L.$

 In fact, the definition above is a definition of the right Leibniz algebra, whereas the identity for the left Leibniz algebra is as follows
\begin{equation} \label{Left} [x,[z,y]]=[[x,z],y]+[z,[x,y]],\end{equation} \index{Left Leibniz algebra identity}
for all $x, y, z\in L.$
The passage from the right to the left Leibniz algebra can be easily done by considering a new product "$[\cdot,\cdot]_{opp}$" on the same vector space defined by "$[x,y]_{opp}=[y ,x]$". \index{opposite Leibniz algebra} Therefore, the results proved for the right Leibniz algebras $(L,[\cdot.\cdot])$ can be easily reformulated for the left Leibniz algebras $(L,[\cdot,\cdot]_{opp})$ and vice versa.

Note that the versions
\begin{equation} \label{Right1} [x,[y,z]]= [[x,y],z]-[[x,z],y], \ \mbox{for all}\  x,y,z\in L \end{equation}
and
\begin{equation} \label{Left} [[x,z],y]=[x,[z,y]]-[z,[x,y]], \ \mbox{for all}\  x,y,z\in L.\end{equation}
of the identities (\ref{Right}) and (\ref{Left}) are also often used.


The Leibniz identity implies that the elements $[x,x], [x,y]+[y,x]$ for any $x, y \in L$ are in the right annihilator $Ann_r(L)$ of $L$ and $Ann_r(L)$ is a two-sided ideal of $L$.


For a given Leibniz algebra $(L,[\cdot,\cdot ])$ the sequences of two-sided ideals defined recursively as follows:
\[L^1=L, \ L^{k+1}=[L^k,L],  \ k \geq 1, \qquad \qquad
L^{[1]}=L, \ L^{[s+1]}=[L^{[s]},L^{[s]}], \ s \geq 1,
\]
are said to be the \emph{lower central} and the \emph{derived series} of $L$, respectively.

\begin{defn} A Leibniz algebra $L$ is said to be
\emph{nilpotent} (respectively, \emph{solvable}), if there exists $n\in\mathbb N$ ($m\in\mathbb N$) such that $L^{n}=0$ (respectively, $L^{[m]}=0$).
\end{defn}

Let $R$ be a solvable Lie algebra with nilradical $N$ and $Q$ be the subspace  complementary
to the nilradical, i.e., $R=N\oplus Q$ as a sum of vector spaces.

\begin{defn} \label{defncomplete}
A Leibniz algebra $L$ is said to be complete, if its center is trivial and any derivation is inner.
\end{defn}

A complete Lie algebra is the Lie algebra of some algebraic group. So it is important in the study of algebraic groups, and its turn methods of algebraic groups is natural for the study of complete Lie algebras.

\begin{exam}
\
\begin{itemize}
  \item Obviously, simple and semisimple Lie algebras are complete.
  \item If a Lie algebra has trivial center and nondegenerate Killing form, then it is complete.
\item There is E.Schenkman's famous derivation
tower theorem \cite{Schenkman} asserting that an algebra with zero center can be subinvariantly embedded in an algebra with only inner derivations.
 \item In \cite{Qobil} the following result on completeness of solvable Lie algebras has been given which we make use.
  \begin{thm}\label{thmLieH1} Let $R=N\oplus Q$ be a solvable Lie algebra such that $dim Q=dim (N/N^2).$ Then $R$ is complete.
\end{thm}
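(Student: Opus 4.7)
The plan is to exploit the extremal hypothesis $\dim Q = \dim(N/N^2)$, which puts us in the Mubarakzjanov maximal case: the operators $\mathrm{ad}\,x$ for $x\in Q$ are simultaneously diagonalizable nil-independent outer derivations of $N$. So I would begin by fixing a basis $\{x_1,\ldots,x_k\}$ of $Q$ and a compatible basis $\{e_1,\ldots,e_k,e_{k+1},\ldots,e_n\}$ of $N$, chosen so that $e_1,\ldots,e_k$ project to a basis of $N/N^2$ and $[x_i,e_j]=\alpha_{ij}e_j$. After a change of coordinates in $Q$ the $k\times k$ matrix $(\alpha_{ij})_{i,j\le k}$ becomes the identity; the weight of each $e_\ell$ with $\ell>k$ is then a nonzero, nonnegative integral combination of the rows of this invertible matrix (since $e_\ell$ is a nested bracket of generators), hence no $e_\ell$ lies in the zero weight space.

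For the triviality of the center, take $z=\sum a_j e_j+\sum b_i x_i\in Z(R)$. The relation $[z,x_i]=0$ forces $a_j\alpha_{ij}=0$ for all $i,j$, and since every weight vector has some nonzero weight, $a_j=0$. Then $z\in Q$, and the conditions $[e_j,z]=0$ for $j\le k$ combined with invertibility of $(\alpha_{ij})_{i,j\le k}$ yield $b_i=0$. Hence $Z(R)=0$.

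For the inner-ness of derivations, let $D\in\mathrm{Der}(R)$. Since $N$ is the nilradical (a characteristic ideal), $D(N)\subseteq N$. Write $D(x_i)=n_i+q_i$ with $n_i\in N$, $q_i\in Q$; applying $D$ to $[x_i,x_j]\in N$ and projecting onto $Q$ produces a linear system in the $q_i$ whose only solution, by the independence of the weights on $Q$, is $q_i=0$. Thus $D(x_i)\in N$. Next I would construct $y\in N$ such that $\mathrm{ad}\,y$ reproduces the values $D(x_i)$: because $\mathrm{ad}\,e_\ell$ acts on $x_i$ as $[e_\ell,x_i]=-\alpha_{i\ell}e_\ell$ and the weight matrix is of maximal rank, the required $y$ can be assembled weight component by weight component. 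Replacing $D$ by $\widetilde D=D-\mathrm{ad}\,y$, we may assume $\widetilde D(x_i)=0$. Then $\widetilde D$ commutes with each $\mathrm{ad}\,x_i$ and hence preserves the weight decomposition of $N$; comparing $\widetilde D$ with $\mathrm{ad}\,m$ on the generators $e_1,\ldots,e_k$ (which is again possible because the weights separate), and using the derivation identity to propagate equality to all brackets of generators, one obtains $\widetilde D|_N=\mathrm{ad}\,m$. Combining, $D=\mathrm{ad}(y+m)$ is inner.

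The main obstacle is the last step: ensuring that after matching $D$ with an inner derivation on the generators of $N$, the derivation rule automatically forces the match on all of $N$ and on all of $Q$. This rests on two points that need to be checked carefully but follow from the chosen basis: first, that no iterated bracket $e_\ell$ ($\ell>k$) acquires a zero weight (used already for the center), and second, that the inner derivations $\mathrm{ad}\,e_\ell$ exhaust each nonzero weight space of $\mathrm{ad}(Q)$ acting on $N$. Everything else reduces to routine weight-space linear algebra compatible with the Jacobi identity.
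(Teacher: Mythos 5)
Your overall weight-space strategy is viable, and your appeal to a simultaneously diagonal action of $Q$ with integral weights is legitimate in the context of this paper: it is exactly what Corollary \ref{cor36} supplies, and Remark \ref{rem15} gives your claim that no $e_\ell$ with $\ell>k$ has zero weight. The center computation is correct and coincides with the method of Proposition \ref{prop1}. But the derivation part contains a step that fails as stated: you deduce $\pi_Q(D(x_i))=0$ by applying $D$ to $[x_i,x_j]=0$ and ``projecting onto $Q$''. Writing $D(x_i)=n_i+q_i$, the expansion is $[n_i,x_j]+[x_i,n_j]=0$: since $Q$ is abelian, every term involving $q_i,q_j$ vanishes identically, and since $[R,R]\subseteq N$ the whole identity lives in $N$, so its $Q$-projection reads $0=0$ and places no constraint whatsoever on the $q_i$. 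The fact $D(Q)\subseteq N$ is true but must be extracted from the \emph{mixed} products: apply $D$ to $[e_i,x_j]=\delta_{ij}e_i$ for generators $i,j\le k$ and compare the coefficient of the generator $e_i$ -- such a coefficient cannot come from $[e_i,N]\subseteq N^2$ -- which kills the $Q$-component of each $D(x_j)$ (this is the computation that, in the rank-one example $[e,x]=e$, forces $D(x)\in\mathbb{C}e$).

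There is a mirror-image gap as well: the relation you consumed incorrectly is precisely what your construction of $y$ needs. With $D(x_i)=n_i\in N$, solving $[y,x_i]=n_i$ with $y=\sum_m y_m e_m$ amounts to $y_m\alpha_{m,i}=(n_i)_m$ for all $i,m$; for fixed $m$ this is an overdetermined system in the single unknown $y_m$, and its solvability is \emph{not} a consequence of the weight matrix having maximal rank, but of the compatibility $(n_i)_m\,\alpha_{m,j}=(n_j)_m\,\alpha_{m,i}$, which is exactly the identity $[n_i,x_j]=[n_j,x_i]$ obtained from $0=D([x_i,x_j])$. Finally, in your last matching step the element $m$ must be taken in $Q$, not assembled from the $\mathrm{ad}\,e_\ell$: since $[N,e_i]\subseteq N^2$ for a generator $e_i$, no inner derivation by an element of $N$ can reproduce $\widetilde D(e_i)=\lambda_i e_i$; one takes $m=-\sum_j\lambda_j x_j$, which then also matches $\widetilde D$ on $Q$ because $Q$ is abelian. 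With these repairs your argument closes, and it is genuinely different from the paper's treatment: the paper imports this theorem from \cite{Qobil} without proof, and for its Leibniz analogues in Section 6 it does not match a given derivation against an explicit inner one, but bounds dimensions, showing $dim\, Der(R)\leq dim\, Inner(R)$ via the quotient by the ideal generated by squares.
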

\end{itemize}
\end{exam}



Let us review some results obtained earlier to motivate the research conducted in the paper. We start with the following classification result on naturally graded $p$-filiform non-Lie Leibniz algebras that has been obtained in \cite{Camacho}.

\begin{thm}\label{p-filiform} An arbitrary $n$-dimensional naturally graded non-split non-Lie $p$-filiform Leibniz algebra $(n-p\geq 4)$ is isomorphic to one of the following non-isomorphic algebras:

$p=2k$
$$\begin{array}{ll}
\mu_1:\left\{\begin{array}{ll}
[e_i,e_1]=e_{i+1}, & 1\leq i\leq n-2k-1,\\[1mm]
[e_1, f_j] =f_{k+j}, & 1\leq j\leq k,\\[1mm]
 \end{array}\right.&

\mu_2:\left\{\begin{array}{ll}
[e_i,e_1]=e_{i+1}, & 1\leq i\leq n-2k-1,\\[1mm]
[e_1,f_1]=e_{2}+f_{k+1}, &\\[1mm]
[e_i,f_1]=e_{i+1}, & 2\leq i\leq n-2k-1,\\[1mm]
[e_1, f_j] =f_{k+j}, & 2\leq j\leq k,\\[1mm]
 \end{array}\right.
 \end{array}$$

$p=2k+1$

$$\mu_3: \quad [e_i,e_1]=e_{i+1},\ 2\leq i\leq n-2k-1,\quad [e_2,f_i]=f_{k+i},\ 1\leq i\leq k.$$
where $\{e_1,e_2,\dots,e_{n-p},f_1,f_2,\dots,f_{p}\}$ is a basis
of the algebra.
\end{thm}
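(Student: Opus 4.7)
The plan is to exploit the natural gradation together with the $p$-filiform hypothesis to fix an adapted basis, use the right Leibniz identity to force most structure constants to vanish, and finally apply graded changes of basis to normalize the remaining coefficients to one of the three listed families.

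First I would set up the gradation. Writing $L=L_1\oplus L_2\oplus \cdots\oplus L_{n-p}$ with $[L_i,L_j]\subseteq L_{i+j}$ and $L_1$ generating $L$, the $p$-filiform condition prescribes a length-$(n-p)$ chain $e_1,e_2,\ldots,e_{n-p}$ with $e_i\in L_i$ and $[e_i,e_1]=e_{i+1}$, and distributes the $p$ remaining generators $f_1,\ldots,f_p$ among the graded pieces in one of a small number of ways. Counting the dimensions $\dim L_i$ subject to the constraint that each $f_j$ is homogeneous shows that for $p$ even the $f$'s split as $k$ elements in $L_1$ and $k$ in $L_2$, while for $p$ odd the extra element forces a shift placing the principal $e$-generator in $L_2$ rather than $L_1$. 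This is the origin of the parity split and of the unusual index range in $\mu_3$.

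Second I would extract the multiplication table. Applying the Leibniz identity $[[x,y],z]=[[x,z],y]+[x,[y,z]]$ to triples drawn from the basis, together with the grading constraints, forces the products $[f_j,f_\ell]$, $[f_j,e_i]$ for $i\ge 2$, and the action of the $f$'s on $e_1$ to be heavily restricted. I would use the standard facts that $[x,x]\in Ann_r(L)$ and $[x,y]+[y,x]\in Ann_r(L)$ (recalled in the preliminaries) to pin down signs and absorb several parameters. What remains is a small family of free coefficients governing the products $[e_1,f_j]$ and, in the even case, $[e_i,f_1]$.

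Third I would normalize by graded automorphisms. Any change of basis preserving the gradation acts on these residual coefficients by a group action whose orbits one can compute directly. For $p=2k$ two orbits emerge in the non-split regime: one where the $e$-chain acts trivially on the $f$'s (yielding $\mu_1$) and one where a cross-term along $e_2$ survives, forcing the companion relations $[e_i,f_1]=e_{i+1}$ and the anomalous bracket $[e_1,f_1]=e_2+f_{k+1}$ (yielding $\mu_2$); the non-split hypothesis rules out the degenerate orbit with all $[e_i,f_j]=0$. For $p=2k+1$ the analogous orbit analysis produces the single family $\mu_3$. I expect the main obstacle to be precisely this step: showing that the orbit decomposition of the residual coefficient space consists of exactly the listed normal forms, with no further exotic orbits, demands a delicate case analysis on whether certain leading coefficients vanish, and the subtle contrast between $\mu_1$ and $\mu_2$ is where a mistake can easily slip in. Pairwise non-isomorphism would then be confirmed by simple graded invariants, for instance the rank of the right multiplication by $f_1$ restricted to $\langle e_2,\ldots,e_{n-p}\rangle$, which vanishes for $\mu_1$ but is maximal for $\mu_2$, together with the parity of $p$ separating both from $\mu_3$.
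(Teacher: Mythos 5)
You should know at the outset that the paper contains no proof of Theorem \ref{p-filiform} to compare your attempt against: the theorem is imported verbatim as a known classification from \cite{Camacho}, and the present paper only uses it as background (its own contributions start with Proposition \ref{thm31}). Measured against the method actually used in \cite{Camacho} — which is indeed the one you describe: homogeneous adapted basis for the natural gradation, Leibniz-identity constraints on graded structure constants, normalization by base changes — your text is a plan rather than a proof. The second and third steps, which carry all of the mathematical content, are asserted rather than executed; you concede yourself that the orbit analysis is ``the main obstacle,'' but that analysis \emph{is} the theorem. In particular, nothing in your outline establishes completeness of the list (absence of further orbits), and the non-Lie hypothesis is never genuinely used: naturally graded $p$-filiform \emph{Lie} algebras exist, the identities you invoke ($[x,x]\in Ann_r(L)$, $[x,y]+[y,x]\in Ann_r(L)$) are vacuous for them, and your argument as written cannot say where the Lie cases separate from $\mu_1$, $\mu_2$, $\mu_3$. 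Likewise, ``non-split'' must rule out any direct summand (e.g.\ a basis vector with all products zero), not merely the orbit where all $[e_i,f_j]$ vanish.

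Beyond incompleteness there are two concrete errors. First, your structural claim for odd $p$ is wrong: in $\mu_3$ nothing is shifted to degree two. From the table one reads $L^2=\langle e_3,\dots,\,f_{k+1},\dots,f_{2k}\rangle$, so \emph{both} $e_1$ and $e_2$ lie in the degree-one component; the actual effect of odd parity is that one of the $p$ singular directions is $e_1$ itself, which drives the chain by right multiplication while $e_2$ heads the filiform chain and acts on the $f_j$ — not that ``the principal $e$-generator'' lands in $L_2$. Second, your proposed invariant separating $\mu_1$ from $\mu_2$ — the rank of $R_{f_1}$ restricted to $\langle e_2,\dots\rangle$ — is basis-dependent and fails as stated: in $\mu_2$ the kernel of the right action of degree-one elements on the chain is the hyperplane through $e_1-f_1$ (indeed $[e_i,e_1-f_1]=e_{i+1}-e_{i+1}=0$ for $i\geq 2$), so an isomorphism is free to mix $e_1$ and $f_1$ and the rank of multiplication by the particular vector named $f_1$ is not an isomorphism invariant. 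An invariant replacement exists but is subtler: setting $K=\{v\in L:\,[L^2,v]=0\}$, one checks $[K,L]=0$ in $\mu_1$ (there $K=\langle f_1,\dots,f_k\rangle$ and each $f_j$ is a left annihilator), whereas in $\mu_2$ one has $e_1-f_1\in K$ with $[e_1-f_1,e_1]=e_2\neq 0$. This kind of verification — like the orbit computation it is meant to conclude — has to be carried out, not gestured at, before the proposal can count as a proof.
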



Let us denote by $R(N,m)$ the class of maximal (with respect to the dimension) solvable Leibniz algebras whose nilradical is $N$, that is, the class of solvable Leibniz algebras whose nilradical is $N$ and maximal possible codimension of the nilradical
is $m$.
 In \cite{p_filiform} it was proved that for the nilpotent Leibniz algebra $N=\mu_1$ the value of $m$ is equal to $k$ and the description of $R(\mu_1, k)$ was given as follows.

\begin{thm} \label{thmmu1}  An arbitrary algebra of the family $R(\mu_1, k)$ admits a basis such that the non-vanishing Leibniz brackets become:
\[R(\mu_1, k)(a_{i,j},\varphi_{i,j},\delta_{i,j}): \quad \left\{\begin{array}{ll}
[e_i,e_1]=e_{i+1}, & 1\leq i\leq n-2k-1,\\[1mm]
[e_1, f_j] =f_{k+j}, & 1\leq j\leq k,\\[1mm]
[e_i,x_j]=\sum\limits_{t=i+1}^{n-2k}a_{t-i+1,j}e_{t},&1\leq i\leq n-2k, \ 1\leq j\leq k,\\[1mm]
[f_i,x_i]=f_i,&1\leq i\leq k,\\[1mm]
[f_{k+i},x_i]=f_{k+i},&1\leq i\leq k,\\[1mm]
[x_i,f_i]=-f_i,&1\leq i\leq k,\\[1mm]
[x_i,f_j]=\varphi_{i,j}f_{k+j},&1\leq i\neq j\leq k,\\[1mm]
[x_i,x_j]=\delta_{i,j}e_{n-2k},&1\leq i, j\leq k.\\[1mm]
\end{array}\right.\]
\end{thm}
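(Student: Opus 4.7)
The plan is to treat $L=N\oplus Q$ with $N=\mu_1$ and $\dim Q=k$, fix the action of $Q$ on $N$ by classifying the outer derivations of $\mu_1$, and then use the Leibniz identity on triples involving elements of $Q$ to nail down the remaining products, with residual freedom absorbed into changes of basis.

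First I would compute $\Der(\mu_1)$. Because $\mu_1$ is generated by $\{e_1,f_1,\dots,f_k\}$, any derivation $d$ is determined by its values on these generators, and the Leibniz-identity compatibility with $[e_i,e_1]=e_{i+1}$ and $[e_1,f_j]=f_{k+j}$ produces a clean block description. Modulo the inner derivations, the outer derivations project onto a $k$-dimensional torus: one diagonal direction for each pair $(f_j,f_{k+j})$, since the $e$-chain is rigidly coupled to these scalars via $[e_1,f_j]=f_{k+j}$. This is precisely why the maximal codimension equals $k$.

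Choosing $x_1,\dots,x_k\in Q$ so that the nil-independent derivations $R_{x_j}|_N$ become simultaneously diagonal and so that the matrix of eigenvalues on the $f_j$'s is the identity, one obtains $[f_i,x_j]=\delta_{ij}f_i$ and, by applying the Leibniz rule to $[e_1,f_j]=f_{k+j}$, also $[f_{k+i},x_j]=\delta_{ij}f_{k+i}$. The nilpotent part of $R_{x_j}$ commutes with right multiplication by $e_1$, which is a cyclic nilpotent operator on $\langle e_1,\dots,e_{n-2k}\rangle$, so it is itself a polynomial shift; this yields $[e_i,x_j]=\sum_{t=i+1}^{n-2k}a_{t-i+1,j}e_t$ with free coefficients $a_{i,j}$. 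For the left-hand products, I would note that $[x_i,f_j]+[f_j,x_i]\in\mathrm{Ann}_r(L)$ and $\mathrm{Ann}_r(L)\cap N$ is contained in $\mathrm{span}(e_2,\dots,e_{n-2k},f_{k+1},\dots,f_{2k})$; combining this with the Leibniz identities applied to the triples $(x_i,e_1,f_j)$ and $(x_i,f_j,e_1)$ collapses the possibilities to $[x_i,f_i]=-f_i+\beta_i f_{k+i}$ and $[x_i,f_j]=\varphi_{i,j}f_{k+j}$ for $i\neq j$, after which the shift $x_i\mapsto x_i-\beta_i f_{k+i}$ kills the extra diagonal term without disturbing the previously normalised products. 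Finally, $[x_i,x_j]$ lies in $[L,L]\cap\mathrm{Ann}_r(L)$, and Leibniz identities such as ${\mathcal L}(x_i,x_j,e_1)=0$ push it into $\langle e_{n-2k}\rangle$, giving the parameters $\delta_{i,j}$.

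The main obstacle I expect is disciplined book-keeping: a large number of Leibniz-identity evaluations are required, and one must repeatedly confirm that the normalising basis changes inside $Q$, together with the corresponding adjustments by elements of $N$, do not reintroduce constants in brackets that have already been put in canonical form. The right annihilator of $L$ is strictly larger than that of $N$ once the $x_i$ are adjoined, and it is precisely this enlargement which allows the off-diagonal parameters $\varphi_{i,j}$ and the top parameters $\delta_{i,j}$ to survive; a careful inventory of $\mathrm{Ann}_r(L)$ is therefore the technical heart of the argument.
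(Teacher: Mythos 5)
Note first that the paper does not prove Theorem \ref{thmmu1}: it is quoted in the Preliminaries as a known result from \cite{p_filiform}, so your proposal can only be measured against what such a proof must contain — and there it has a genuine gap. Your claim that the outer derivations of $\mu_1$ project onto a $k$-dimensional torus, ``one diagonal direction for each pair $(f_j,f_{k+j})$'', with the $e$-chain rigidly coupled to these scalars, is false. The maximal torus in $\Der(\mu_1)$ is $(k+1)$-dimensional: besides the $k$ diagonal derivations $f_j\mapsto f_j$, $f_{k+j}\mapsto f_{k+j}$, there is the diagonal derivation $e_i\mapsto i\,e_i$, $f_j\mapsto 0$, $f_{k+j}\mapsto f_{k+j}$ (compatible with $[e_i,e_1]=e_{i+1}$ and $[e_1,f_j]=f_{k+j}$), which is outer (inner derivations of a nilpotent algebra are nilpotent) and nil-independent of the others. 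This matches $\dim(\mu_1/\mu_1^2)=k+1$, which the paper itself stresses right after Proposition \ref{propmu1} when contrasting the codimension $k$ with the number of generators $k+1$. So ``this is precisely why the maximal codimension equals $k$'' inverts the logic: the real content of the theorem is that, although a $(k+1)$-dimensional torus of nil-independent derivations exists, in a solvable \emph{Leibniz} extension of maximal codimension the operators $R_{x_j}|_N$ can carry no diagonal component along the $e$-chain, i.e. $[e_i,x_j]$ is forced to be a pure shift $\sum_{t\geq i+1}a_{t-i+1,j}e_t$ and hence $m=k$, not $k+1$. This is a Leibniz-specific obstruction produced by the left products $[x_j,\cdot]$ through the Leibniz identity; it cannot be obtained from the torus of $\Der(\mu_1)$, and your normalisation ``so that the matrix of eigenvalues on the $f_j$'s is the identity'' is unjustified without it — a priori the span of the diagonal parts of $R_{x_1},\dots,R_{x_k}$ could be any $k$-dimensional subspace of the $(k+1)$-dimensional torus, and no basis change inside $Q$ alters that span. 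Establishing that this span is exactly the subspace acting trivially (mod nilpotents) on the $e$-chain is the heart of the proof in \cite{p_filiform}, and your sketch assumes it away.

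A secondary, fixable slip: the substitution $x_i\mapsto x_i-\beta_i f_{k+i}$ does not remove the term $\beta_i f_{k+i}$ from $[x_i,f_i]$, because $[f_{k+i},f_i]=0$ in $\mu_1$; the correct shift is $x_i\mapsto x_i-\beta_i e_1$, using $[e_1,f_i]=f_{k+i}$. That shift, however, perturbs the already-normalised products $[e_t,x_i]$ and $[x_i,x_j]$ (via $[e_t,e_1]=e_{t+1}$ and $[e_1,x_j]=\sum_{t\geq 2}a_{t,j}e_t$), so the re-normalisation has to be re-run — exactly the book-keeping hazard you flag, but here it actually bites. The remaining steps (the shift structure of $[e_i,x_j]$ from ${\mathcal L}(e_i,e_1,x_j)=0$, locating $[x_i,f_j]$ via $\mathrm{Ann}_r$ and the triples $(x_i,e_1,f_j)$, pushing $[x_i,x_j]$ into $\langle e_{n-2k}\rangle$) are sound in outline.
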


Note that the center of $R(\mu_1, k)(a_{i,j},\varphi_{i,j},\delta_{i,j})$ is trivial and the fact that all derivations are inner can be obtained by a straightforward computation. Therefore, $R(\mu_1, k)(a_{i,j},\varphi_{i,j},\delta_{i,j})$ is complete.

There the necessary and sufficient conditions of the existence of an isomorphism between two algebras of the family $R(\mu_1, k)(a_{i,j},\varphi_{i,j},\delta_{i,j})$ also was given as follows.

\begin{prop} \label{propmu1} Two algebras $R(\mu_1, k)^\prime(a_{i,j}^\prime,\varphi_{i,j}^\prime,\delta_{i,j}^\prime)$ and $R(\mu_1, k)(a_{i,j},\varphi_{i,j},\delta_{i,j})$ are isomorphic if and only if there exists $A\in \mathbb{C}^*$ such that
$$\begin{array}{llll}
a_{i,j}^\prime=\frac{a_{i,j}}{A^{i-1}}, & 2\leq i\leq n-2k+1, & 1\leq j\leq k,&\\
\varphi_{i,j}^\prime=\frac{\varphi_{i,j}}{A}, &1\leq i\neq
j\leq k,& \delta_{i,j}^\prime=\frac{\delta_{i,j}}{A^{n-2k}},
& 1\leq i,j\leq k.
\end{array}$$
\end{prop}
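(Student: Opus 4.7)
The statement is an ``if and only if'', so I would treat the two directions separately. Sufficiency is a direct calculation: given $A\in\mathbb{C}^*$, define $\psi:R(\mu_1,k)(a_{i,j},\varphi_{i,j},\delta_{i,j})\to R(\mu_1,k)'(a_{i,j}',\varphi_{i,j}',\delta_{i,j}')$ on the canonical basis by $\psi(e_i)=A^{-i}e_i'$ for $1\leq i\leq n-2k$, $\psi(f_j)=f_j'$ and $\psi(f_{k+j})=A^{-1}f_{k+j}'$ for $1\leq j\leq k$, and $\psi(x_j)=x_j'$ for $1\leq j\leq k$. Substituting these expressions into the multiplication table of Theorem \ref{thmmu1}, one sees that the brackets $[e_i,e_1]$, $[e_1,f_j]$, $[f_i,x_i]$, $[f_{k+i},x_i]$ and $[x_i,f_i]$ are automatically preserved, while $[e_i,x_j]$, $[x_i,f_j]$ (for $i\neq j$) and $[x_i,x_j]$ produce exactly the relations $a'_{i,j}=a_{i,j}/A^{i-1}$, $\varphi'_{i,j}=\varphi_{i,j}/A$ and $\delta'_{i,j}=\delta_{i,j}/A^{n-2k}$.

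For necessity I would start with an arbitrary isomorphism $\psi$ and reduce it to the diagonal form above using the structural rigidity of the algebra. The invariants I would use are: first, $\psi$ maps the nilradical $N=\langle e_i,f_j\rangle$ onto itself, and from the table of $\mu_1$ one has $N^2=\langle e_2,\ldots,e_{n-2k},f_{k+1},\ldots,f_{2k}\rangle$, so $\{e_1,f_1,\ldots,f_k\}$ represents a basis of $N/N^2$; second, modulo $N^2$ the right multiplication $R_{e_1}$ on the $e$-chain is a Jordan block of maximal length $n-2k$, which distinguishes $e_1$ up to scalar; third, the operators $R_{x_i}$ on $N/N^2$ have $f_i$ as a weight-$1$ eigenvector and all other vectors as null weight, which separates the $f_j$-components from $e_1$ and from each other. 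These invariants force $\psi(e_1)\equiv A^{-1}e_1'\pmod{N^2}$ for some $A\in\mathbb{C}^*$, $\psi(f_j)\equiv\beta_j f_j'\pmod{N^2}$ for scalars $\beta_j\in\mathbb{C}^*$, and, inductively from $[e_{i-1},e_1]=e_i$ and $[e_1,f_j]=f_{k+j}$, $\psi(e_i)\equiv A^{-i}e_i'$ and $\psi(f_{k+j})\equiv A^{-1}\beta_j f_{k+j}'$ modulo deeper filtration terms. The relations $[f_i,x_i]=f_i$ and $[f_{k+i},x_i]=f_{k+i}$ then fix the $x_i'$-coefficient of $\psi(x_i)$ at $1$ and, together with the weight analysis, kill the off-diagonal $x_s'$-components for $s\neq i$.

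The remaining $\beta_j$-freedom is absorbed by post-composing $\psi$ with the diagonal automorphism $f_j\mapsto\beta_j^{-1}f_j$, $f_{k+j}\mapsto\beta_j^{-1}f_{k+j}$ of the target algebra, which leaves every structure constant unchanged. After this reduction, $\psi$ is precisely the diagonal map of the sufficiency part, and reading off the coefficients of the brackets $[e_i,x_j]$, $[x_i,f_j]$ and $[x_i,x_j]$ on both sides yields the three claimed identities.

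The main obstacle will be rigorously disposing of the ``lower-order'' off-diagonal contributions that appear when expanding a general $\psi$ on the basis vectors: these extra terms, which a priori could twist the parameter identifications by lower-dimensional cancellations, must be shown either to vanish or to be absorbable into an inner or diagonal automorphism of the target algebra. Controlling them requires an iterative use of the Leibniz identity together with the Jordan-block structure of $R_{e_1}$ and the weight decomposition under the $R_{x_i}$'s described above; once that rigidity is established, the three parameter identities fall out immediately from coefficient comparison.
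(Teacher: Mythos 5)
Note first that the paper itself contains no proof of Proposition \ref{propmu1}: it is quoted verbatim from \cite{p_filiform} as background motivation, so there is no in-paper argument to compare yours against; your attempt has to stand on its own. Your sufficiency direction does: the diagonal map $\psi(e_i)=A^{-i}e_i'$, $\psi(f_j)=f_j'$, $\psi(f_{k+j})=A^{-1}f_{k+j}'$, $\psi(x_j)=x_j'$ really does preserve the brackets $[e_i,e_1]$, $[e_1,f_j]$, $[f_i,x_i]$, $[f_{k+i},x_i]$, $[x_i,f_i]$, and comparing coefficients in $[e_i,x_j]$, $[x_i,f_j]$ and $[x_i,x_j]$ yields exactly $a'_{i,j}=a_{i,j}/A^{i-1}$, $\varphi'_{i,j}=\varphi_{i,j}/A$, $\delta'_{i,j}=\delta_{i,j}/A^{n-2k}$. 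That half is complete and correct.

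The necessity direction, however, has a genuine gap, and in fact two concrete ones. First, your weight-space argument only distinguishes the $e_1$-direction from the $f$-directions and the $f_j$'s from one another \emph{as an unordered family}: the eigenvalue data of the operators $R_{x_i}$ on $N/N^2$ is invariant under a simultaneous relabeling $\sigma\in S_k$ of the triples $(f_j,f_{k+j},x_j)$, and such a relabeling is an isomorphism carrying $(a_{i,j},\varphi_{i,j},\delta_{i,j})$ to the $\sigma$-permuted parameters, which is not of the stated form. Your sketch silently assumes $\psi(x_i)$ matches $x_i'$ rather than $x_{\sigma(i)}'$; either you must exclude nontrivial $\sigma$ or the identities you derive carry a permutation. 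Second, the ``lower-order off-diagonal contributions'' that you explicitly defer are precisely where the content lies, because the parameters are sensitive to them: for instance the shift $x_i\mapsto x_i+\mu e_1$ moves $\varphi_{i,j}$ to $\varphi_{i,j}+\mu$ and $a_{2,i}$ to $a_{2,i}+\mu$, so if such a shift could be absorbed into the canonical form the proposition would be false. It cannot --- it introduces a $\mu f_{k+i}$ term into $[x_i,f_i]=-f_i$ and pushes $[x_i,x_j]$ off the line $\mathbb{C}e_{n-2k}$ --- but showing that no combination of further corrections (adjusting $f_i$ by $f_{k+i}$, the $e$-chain by unipotent terms, etc.) repairs the damage is a finite but essential Leibniz-identity computation that your proposal names as ``the main obstacle'' and then does not perform. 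As written, the ``only if'' half is a plausible plan, not a proof.
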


From Proposition \ref{propmu1} we can get no uniqueness of the solvable Leibniz algebras found in the case of $dimQ < dim(N/N^2)$ with non-split $N$ (while due to Proposition \ref{cor311} we have uniqueness in the case of $dimQ =dim(N/N^2)$. Indeed, for the nilpotent Leibniz algebra $\mu_1$ we have $dim(\mu_1/\mu_1^2)=k+1$ and $codim (\mu_1)=k$, but the class of solvable Leibniz algebras $R(\mu_1,k)^\prime(a_{i,j}^\prime,\varphi_{i,j}^\prime,\delta_{i,j}^\prime)$ contains infinite numbers of non-isomorphic algebra.

This is the essential difference between maximal solvable Lie and non-Lie Leibniz extensions for the case of $dimQ < dim(N/N^2)$ with non-split $N$. The uniqueness of complex maximal solvable Lie extensions of nilpotent algebras was proved in \cite{Qobil}.

\

\section{Description of solvable Leibniz algebras with maximal possible codimension of the nilradical}

\

In this section we describe complex finite-dimensional solvable Leibniz algebras $R$ with a given nilradical $N$. Write $R$ as the direct sum of vector spaces $R=N\oplus Q$, where $Q$ is the subspace complementary to $N$.

Here is an auxiliary result we make use.
\begin{prop} \label{thm31}
Let $R=N\oplus Q$ be such that the number of linear independent basis generators of $N$ equals the dimension of $Q$. Then $R$ admits a basis $\{e_1, e_2, \dots, e_n, x_1, x_2, \dots, x_k\}$ such that the following products

\begin{equation}\label{eq1}
\left\{\begin{array}{ll}
[e_i,x_i]=e_i,& 1\leq i\leq k,\\[1mm]
[x_i,e_i]=(b_i-1)e_i, &b_i\in\{0,1\}, 1\leq i\leq k,\\[1mm]
[e_i,x_j]=0,& 1\leq i\neq j\leq k,\\[1mm]
[x_i,x_j]=0,& 1\leq i,j\leq k,\\[1mm]
\end{array}\right.
\end{equation}
hold true.

\end{prop}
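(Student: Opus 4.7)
The plan follows a Mubarakzjanov-type strategy adapted to Leibniz algebras: realize the operators $R_{x_j}|_N$ as a commuting, nil-independent family of derivations of the nilradical, simultaneously diagonalize them, and then extract the remaining bracket relations from the Leibniz identity.

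First, each $R_{x_j}|_N$ is a derivation of $N$ by the right Leibniz identity, and nil-independence follows from a standard argument: if $\sum c_j R_{x_j}|_N$ were nilpotent on $N$, then $R_{\sum c_j x_j}$ would be nilpotent on all of $R$, forcing $\sum c_j x_j \in N \cap Q = 0$. Since a derivation of a nilpotent Leibniz algebra is nilpotent iff its induced map on $N/N^2$ is nilpotent, this yields linear independence of the $\bar R_{x_j}$ on the $k$-dimensional space $N/N^2$; they commute there because $[x_i,x_j]\in N$ induces an inner derivation that vanishes modulo $N^2$.

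Next I argue that $k$ commuting, nil-independent operators on a $k$-dimensional space are simultaneously diagonalizable. By Jordan--Chevalley, the semisimple parts $S_j$ commute and remain linearly independent by nil-independence, so their joint weights on $N/N^2$ are pairwise distinct, each joint weight space is one-dimensional, and the nilpotent parts vanish there. After a linear change of basis in $Q$, the weights become the standard vectors $\epsilon_1,\dots,\epsilon_k\in\mathbb{C}^k$. Passing to $N$: since the $S_j$-weights on $N^s/N^{s+1}$ for $s\geq 2$ are sums of at least two of the $\epsilon_i$'s and hence differ from every $\epsilon_i$, the joint $\epsilon_i$-weight space in $N$ is one-dimensional. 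Choosing $e_i$ in this one-dimensional eigenspace (on which the nilpotent parts $N_j$ again vanish) yields $[e_i,x_j]=\delta_{ij}e_i$ exactly.

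For the remaining products I use the Leibniz identity. To obtain $[x_i,x_j]=0$, I expand $[x_i,x_j]\in N$ in the weight basis and apply
\[
[[x_i,x_j],e_l] = [[x_i,e_l],x_j] + [x_i,[x_j,e_l]]
\]
for each $l$; combined with the relations $[e_l,x_m]=\delta_{lm}e_l$ and the one-dimensionality of the relevant weight spaces, this forces every weight component of $[x_i,x_j]$ to vanish. In particular $[x_i,x_i]=0$. The relation $[e_i,x_i]+[x_i,e_i]\in Ann_r(R)$ together with the one-dimensionality of $N_{\epsilon_i}$ puts $[x_i,e_i]$ into the span of $e_i$, say $[x_i,e_i]=c_i e_i$; applying the Leibniz identity to $[[x_i,x_i],e_i]=0$ yields the polynomial identity $c_i(1+c_i)=0$, so $c_i\in\{0,-1\}$, i.e., $[x_i,e_i]=(b_i-1)e_i$ with $b_i\in\{0,1\}$.

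The main obstacle I anticipate is the interplay between the weight-space rigidity on $N$ and the Leibniz-identity manipulations in the last step: one must exploit the one-dimensionality of the $\epsilon_i$-weight spaces simultaneously with carefully chosen Leibniz identities to pin down both $[x_i,x_j]=0$ exactly and the dichotomy $b_i\in\{0,1\}$, with the two cases corresponding to whether $e_i\in Ann_r(R)$ or not. Once the weight decomposition is in place, the Jordan--Chevalley diagonalization and the identity-chasing are otherwise routine.
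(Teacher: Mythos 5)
Your analysis on $N/N^2$ is correct and complete: there the operators $\bar R_{x_j}$ genuinely commute, because $[R_{x_i},R_{x_j}]=R_{[x_j,x_i]}|_N$ maps $N$ into $N^2$, and your Jordan--Chevalley/weight-counting argument cleanly gives $[e_i,x_j]\equiv\delta_{ij}e_i \pmod{N^2}$. The genuine gap is the step ``Passing to $N$''. On $N$ itself the $R_{x_j}|_N$ do \emph{not} commute; they commute only modulo the nilpotent inner derivations $R_{[x_j,x_i]}|_N$, and commutativity modulo nilpotents does not make the semisimple parts commute: e.g. $A=\left(\begin{smallmatrix}1&0\\0&0\end{smallmatrix}\right)$ and $B=\left(\begin{smallmatrix}0&1\\0&1\end{smallmatrix}\right)$ are both semisimple with $[A,B]$ nilpotent and nonzero, so $S_A=A$ and $S_B=B$ do not commute. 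Hence the joint weight decomposition of $N$ under $\{S_j\}$, the one-dimensionality of the $\epsilon_i$-joint-weight space, the vanishing of the nilpotent parts there, and the exact relations $[e_i,x_j]=\delta_{ij}e_i$ are all unjustified. Worse, the argument is circular: the $R_{x_j}|_N$ commute exactly once one knows $R_{[x_i,x_j]}|_N=0$, which is essentially the statement $[x_i,x_j]=0$ you propose to deduce \emph{from} the diagonalization. Your identity-chase for $[x_i,x_j]=0$ has the same problem from the other side: $[[x_i,x_j],e_l]=[[x_i,e_l],x_j]+[x_i,[x_j,e_l]]$ involves the left products $[x_i,e_l]$, $[x_j,e_l]$, which are not yet controlled at that stage, and the left action does not respect your right weight decomposition --- the paper's own Remark 3.3 warns that the left root subspaces need not be one-dimensional. (The usable identity is ${\mathcal L}(e_l,x_i,x_j)=0$, which gives $[e_l,[x_i,x_j]]=0$ for all $l$ and hence $R_{[x_i,x_j]}=0$ on $N$; but that only yields $[N,[x_i,x_j]]=0$, not $[x_i,x_j]=0$.)

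This is exactly the difficulty the paper's proof is built to circumvent: instead of diagonalizing once and for all, it inducts along the lower central series, proving \emph{all} the relations simultaneously (including $[x_i,x_j]\equiv 0$) in each quotient $R/N^{m+1}$, correcting the generators by explicit base changes $e_1'=e_1+\cdots$ using the eigenvalue bounds $l_q\geq 2$ and $r_{i,l}\geq 1$ on the right-normed word basis; the base case $m=2$ is imported from the classification of solvable Leibniz algebras with abelian nilradical in \cite{Abelian} (your $N/N^2$ analysis in effect reproves that base case). Your final computation --- $[x_i,e_i]=c_ie_i$ and $c_i(1+c_i)=0$ from ${\mathcal L}(x_i,x_i,e_i)$ with $[x_i,x_i]=0$, giving $b_i\in\{0,1\}$ --- is fine once the earlier steps are secured. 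To repair the lift from $N/N^2$ to $N$ you would need either a level-by-level correction of the kind the paper performs, or a torus-conjugacy argument in the spirit of Lemma 3.2 of \cite{Qobil} allowing you to modify the $x_j$ (not merely change basis inside $Q$) so that the right actions land in a common torus; neither ingredient appears in your sketch.
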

\begin{proof}
Let $\{x_1, x_2, \dots, x_k\}$ be a basis of $Q$, $E=\{e_1, e_2, \dots, e_s\}$ be the set of linear independent generators of $N$. Denote by $\tau$ the nilindex of $N$, i.e., $N^{\tau-1}\neq0$ and $N^{\tau}=0$.

Introduce the following notations. Let $M$ be the span of all the right-normed words the length $m$ of the alphabet $E$. We break up $M$ into the following three subspaces $M_0, M_1$ and $M_2$, where $M_0$ is spanned by the right-normed words without $e_1$, $M_1$ is spanned by the right-normed words with only one $e_1$ and $M_2$ is the subspace spanned by the right-normed words with more than one involvement of $e_1$, respectively.

Obviously,
$$M_0=\bigcup_{t=2}^{k}M_{0,t},\ \mbox{and}\ M_1=\bigcup_{t=2}^{k}M_{1,t},$$
where $M_{0,t}$ and $M_{1,t}$ are the subspaces of $M_0$ and $M_1$, respectively,  spanned by the words with the minimal subindex of $\{e_2, e_3, \dots, e_s\}$ equals $t$.

Let set
$$\{u_{t,1}, u_{t,2}, \dots, u_{t,s_t}\}, \quad \{v_{t,1}, v_{t,2}, \dots, v_{t,k_t}\}, \ \mbox{and}\ \{w_1, w_2, \dots, w_p\}$$
to be bases of $M_{0,t}, M_{1,t}$ and $M_{2},$ respectively.

Leibniz identity gives that any word can be written as a linear combination of the right-normed words of the same length. Therefore, without loss of generality we can assume that the basis elements $u_{t,i}, v_{t,j}, w_{l}$ to be chosen as the right-normed words of the alphabet $E$.

Note also that for any $m$ one has
 $R/N^{m}=(N\oplus Q)/N^{m}=N/N^m\oplus Q$ and $N/N^m$ is the nilradical of the quotient algebra $R/N^{m}$ that gives chance to use the method of mathematical induction by $m$ ($2\leq m \leq \tau+1$) taking into account that the quotient algebra $R/N^{m}$ admits a basis such that the counterpart of the products \eqref{eq1} hold true.

Let $m=2$. Since $N/N^2$ is abelian, the solvable quotient Leibniz algebra algebra $R/N^2$ has abelian nilradical. Due to a result of the paper \cite{Abelian} the algebra $R/N^2$ has to have the table of multiplications \eqref{eq1}.

Let us assume that the products (\ref{eq1}) to be true for $m$. In order to prove (\ref{eq1}) for $(m+1)$ in the quotient algebra $R/N^{m+1}$ we need to consider the products:
$$[e_i,x_i],\ [x_i,e_i],\ [e_i,x_j],\ [x_j,e_i],\ [x_i,x_j],\ \ \ 1\leq i\neq j \leq k.$$

For the sake of a convenience further we use the congruences modulo $N^{m+1}$ without writing $N^{m+1}$.
From induction supposition we have

%

\qquad\qquad $[e_1,x_1]\equiv e_1+\sum\limits_{j=2}^{k}\sum\limits_{l=1}^{s_{j}}\alpha_{j,l}^1u_{j,l}+
\sum\limits_{j=2}^{k}\sum\limits_{l=1}^{k_{j}}\beta_{j,l}^1v_{j,l}
+\sum\limits_{q=1}^{p}\gamma_{q}^1w_{q},$

\qquad\qquad\ $[e_1,x_i]\equiv \sum\limits_{j=2}^{k}\sum\limits_{l=1}^{s_{j}}\alpha_{j,l}^iu_{j,l}+
\sum\limits_{j=2}^{k}\sum\limits_{l=1}^{k_{j}}\beta_{j,l}^iv_{j,l}
+\sum\limits_{q=1}^{p}\gamma_{q}^iw_{q},$ where $2\leq i \leq k.$

Observe that $[w_q,x_1]\equiv l_{q}w_q$ with $l_{q}\geq2.$ Indeed, applying the Leibniz identity $[[x,y],z]=[x,[y,z]]+[[x,z],y]$ and the induction supposition we derive
$$[w_q,x_1]=[[[...[e_{i_1},e_{i_2}],e_{i_{3}}],...],e_{i_m}],x_1]\equiv l_{q}w_q$$
where $l_{q}=\alpha_{i_{m}}+\alpha_{i_{m-1}}+...+\alpha_{i_{2}}+\alpha_{i_{1}}$ with $\alpha_{i_{j}}=\begin{cases} 1,\ \mbox{if} \ i_{j}=1,\\[1mm]
0,\ \mbox{if} \ i_{j}\neq 1.\\[1mm]
\end{cases}$

Since the involvement of $e_1$ in $w_{q}$ is more than once we conclude that $l_{q}\geq2$. Similarly, one can derive $[v_{i,l},x_i]\equiv r_{i,l}v_{i,l},$ where $r_{i,l}\geq 1.$

Apply the base change
$$e_1'=e_1+\sum\limits_{j=2}^{k}\sum\limits_{l=1}^{s_{j}}\alpha_{j,l}u_{j,l}-
\sum\limits_{j=2}^{k}\sum\limits_{l=1}^{k_{j}}\frac{\beta_{j,l}^{j}}{r_{j,l}}v_{j,l}
+\sum\limits_{q=1}^{p}\frac{\gamma_{q}}{1-l_{q}}w_{q},$$
to get

\qquad\qquad\qquad\qquad$[e_1,x_1]\equiv e_1+\sum\limits_{j=2}^{k}\sum\limits_{l=1}^{k_{j}}\beta_{j,l}v_{j,l}$\\
and

\qquad\qquad\qquad\qquad$[e_1,x_i]\equiv \sum\limits_{j=2}^{k}\sum\limits_{l=1}^{s_{j}}{\alpha_{j,l}^i}u_{j,l}+\sum\limits_{\tiny{\begin{array}{c} j=2,\\ j\neq i\end{array}}}^{k}\sum\limits_{l=1}^{k_{j}}{\beta_{j,l}^i}v_{j,l}+\sum\limits_{q=1}^{p}{\gamma_{q}^i}w_{q}, 2\leq i \leq k.$

Leibniz identities ${\mathcal L}(e_1, x_1, x_j)={\mathcal L}(e_1, x_i, x_q)=0$ imply

\qquad\qquad\qquad\qquad\qquad $[e_1,x_1]\equiv e_1,\ \ [e_1,x_i]\equiv \sum\limits_{\tiny{\begin{array}{c} j=2,\\ j\neq i\end{array}}}^{k}\sum\limits_{l=1}^{k_{j}}\beta_{j,l}^iv_{j,l},$

\begin{equation}\label{eq2}
[e_1,x_2]\equiv0, \quad [e_1,x_i]\equiv \sum\limits_{j=2}^{i-1}\sum\limits_{l=1}^{k_{j}}\beta_{j,l}^iv_{j,l},\ 3\leq i\leq k.
\end{equation}

Now we prove that
\begin{equation}\label{eq3}
[e_1,x_{h}]\equiv 0, \quad [e_1,x_i]\equiv \sum\limits_{j=h}^{i-1}\sum\limits_{l=1}^{k_{j}}\beta_{j,l}^iv_{j,l},\
\mbox{where}\ 2\leq h \leq i-1\ \mbox{and}\ 3\leq i\leq k,
\end{equation}
the \eqref{eq2} being the base of the induction. Let us assume that \eqref{eq3} holds true.

Thus, we obtain $$[e_1,x_1]\equiv e_1,\ \ [e_1,x_i]\equiv 0.$$

We put
$$[x_1,e_1]\equiv (b_1-1)e_1+\sum\limits_{j=2}^{k}\sum\limits_{l=1}^{s_{j}}a_{j,l}u_{j,l}+
\sum\limits_{j=2}^{k}\sum\limits_{l=1}^{k_{j}}b_{j,l}v_{j,l}+\sum\limits_{q=1}^{p}c_{q}w_{q}$$

$$[x_i,e_1]\equiv \sum\limits_{j=2}^{k}\sum\limits_{l=1}^{s_{j}}m_{j,l}^iu_{j,l}+\sum\limits_{j=2}^{k}\sum\limits_{l=1}^{k_{j}}n_{j,l}^iv_{j,l}+
\sum\limits_{t=1}^{p}h_{t}^iw_{t},\  \ \ 2\leq i\leq k.$$

From ${\mathcal L}(x_1, e_1, x_i)={\mathcal L}(x_i, e_1, x_1)={\mathcal L}(x_i, e_1, x_t)=0,\ \mbox{where}\ 1\leq i\leq k,\ \ 2\leq t\leq k,$ we get
$$[x_1,e_1]\equiv (b_1-1)e_1, \quad [x_i,e_1]\equiv 0, \ \mbox{for}\ 2\leq i\leq k.$$

These yield the products
$$[e_1,x_1]\equiv e_1, \quad [e_1,x_i]\equiv [x_i,e_1]\equiv 0, \quad [x_1,e_1]\equiv (b_1-1)e_1, \ \mbox{where}\ b_1\in \{1,0\}, \ \mbox{and}\ 2\leq i\leq k.$$

Successively applying the arguments used above for $i$ to the products $[e_i,x_j],\ [x_j,e_i]$ and taking into account that the base changes do not affect to the products $[e_t,x_j]$ and $[x_j,e_t]$ for $1\leq t \leq i-1,$ we deduce
$$[e_i,x_i]\equiv e_i, \quad [e_i,x_j]\equiv [x_j,e_i]\equiv 0, \quad [x_i,e_i]\equiv (b_i-1)e_i,\mbox{where}\ b_i\in \{1,0\}, \ \mbox{and}\ 2\leq i\neq j \leq k.$$

To prove $[x_i,x_j]\equiv 0$ for $1\leq i, j \leq k$ we need to consider the subspaces $M_t, \ 1\leq t\leq k,$
where $M_t$ is a subspace of $M$ spanned by the words with minimal subindex of $\{e_1, e_2, \dots, e_k\}$ equals $t.$

Again successively applying the arguments used above for the values of $i, \ \ 2\leq i \leq k$ to the products
$[x_i,x_j],\ [x_j,x_i],\ [x_i,x_i]$ and taking into account that
$[x_t,x_j]\equiv[x_j,x_t]\equiv 0$ for $1\leq t \leq i-1$ and the fact that in the expression of $[x_i,x_j]$ participate
only the basis elements $v_{t,q}$ with $i\leq t \leq k, \ 1\leq q \leq s_t$, we get
$[x_i,x_j]=0,\ \ 1\leq i,j\leq k.$

Taking in the products (\ref{eq1}) $m=\tau+1$ we complete the proof of the theorem.
\end{proof}

We set $[e_i,e_j]=\sum\limits_{t=k+1}^{n}\gamma_{i,j}^te_t, \ 1\leq i, j\leq n.$

\begin{thm} \label{thm35} Let $R=N\oplus Q$ be a solvable Leibniz algebra such that $dimQ=dim (N/N^2)=k.$ Then $R$ admits a
basis $\{e_1, e_2, \dots, e_n, x_1, x_2, \dots, x_k\}$ such that the table of multiplications of $R$ is given as follows
\begin{equation}\label{table1}
\left\{\begin{array}{ll}
[e_i,e_j]=\sum\limits_{t=k+1}^{n}\gamma_{i,j}^te_t,& 1\leq i, j\leq n,\\[1mm]
[e_i,x_i]=e_i,& 1\leq i\leq k,\\[1mm]
[x_i,e_i]=(b_i-1)e_i, & b_i \in \{0,1\}, 1\leq i\leq k,\\[1mm]
[e_i,x_j]=\alpha_{i,j}e_i,& k+1\leq i\leq n,\ \ 1\leq j\leq k,\\[1mm]
[x_j,e_i]=\sum\limits_{t=1}^{q}\beta_{j,i}^{i_t}e_{i_t}, &  k+1\leq i\leq n, \ 1\leq j\leq k,\\[1mm]
\end{array}\right.
\end{equation}
where $\alpha_{i,j}$ is the number of the involvement of generator basis element $e_j$ in non generator basis element $e_i$.
\end{thm}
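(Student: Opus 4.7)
The plan is to build the required basis in two stages. First, I apply Proposition \ref{thm31} to obtain $k$ linearly independent generators $\{e_1,\dots,e_k\}$ of $N$ together with a basis $\{x_1,\dots,x_k\}$ of $Q$ satisfying the ``diagonal'' products $[e_i,x_i]=e_i$ and $[x_i,e_i]=(b_i-1)e_i$, while $[e_i,x_j]$, $[x_j,e_i]$ and $[x_i,x_j]$ vanish for the remaining index choices in $\{1,\dots,k\}$. Second, I complete $\{e_1,\dots,e_k\}$ to a basis of $N$ by choosing $\{e_{k+1},\dots,e_n\}$ to be right-normed words in these generators which form a basis of a complement of $\mathrm{span}(e_1,\dots,e_k)$ in $N$, i.e., a basis of $N^2$. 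This is legitimate because $\{e_1,\dots,e_k\}$ generate $N$ modulo $N^2$ and, by the Leibniz identity, every element of $N^2$ is a linear combination of right-normed products of the generators.

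With this basis in place, the relation $[e_i,e_j]\in N^2=\mathrm{span}(e_{k+1},\dots,e_n)$ is automatic for all $1\le i,j\le n$, giving the first family of products, and the diagonal relations for generators come directly from Proposition \ref{thm31}. Since $N$ is an ideal of $R$, both $[e_i,x_j]$ and $[x_j,e_i]$ lie in $N$, so only the precise shape of these products for $k+1\le i\le n$ remains to be verified.

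The core computation is the claim $[e_i,x_j]=\alpha_{i,j}e_i$, where $\alpha_{i,j}$ counts the occurrences of $e_j$ in the right-normed word chosen for $e_i$. I would prove this by induction on the length $m$ of the word $e_i=[\,[\cdots[e_{i_1},e_{i_2}],\dots\,],e_{i_m}]$. Writing $e_i=[a,b]$ with $b=e_{i_m}$ a generator and $a$ a right-normed word of length $m-1$, the Leibniz identity \eqref{Right} gives
\[
[[a,b],x_j]=[[a,x_j],b]+[a,[b,x_j]].
\]
By the induction hypothesis $[a,x_j]=\alpha_{a,j}\,a$, while Proposition \ref{thm31} yields $[b,x_j]=b$ if $i_m=j$ and $[b,x_j]=0$ otherwise. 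A brief case analysis then produces $[e_i,x_j]=\alpha_{i,j}e_i$, using that the count of $e_j$ in $[a,b]$ is $\alpha_{a,j}$ plus one exactly when $i_m=j$.

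For the remaining products $[x_j,e_i]$ with $i>k$, the identity \eqref{Right1}, in the form $[x_j,[a,b]]=[[x_j,a],b]-[[x_j,b],a]$, together with an analogous induction on the length of the right-normed word reduces the computation to products that are already under control: by induction $[x_j,a]\in N$, and $[x_j,b]$ is a scalar multiple of a generator, so $[x_j,e_i]$ lies in $N$ and is therefore a linear combination of basis elements of the stated form. The step I expect to be most delicate is the coordination between the two stages: one must check that after the base changes of Proposition \ref{thm31} the new generators still admit a selection of right-normed words $\{e_{k+1},\dots,e_n\}$ spanning $N^2$ so that the counting argument for $\alpha_{i,j}$ is unambiguously valid on this specific basis.
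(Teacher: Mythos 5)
Your first two steps match the paper exactly: invoking Proposition \ref{thm31}, completing the generators by right-normed words spanning $N^2$, and the telescoping Leibniz induction giving $[e_i,x_j]=\alpha_{i,j}e_i$ are precisely the paper's argument (your worry about ``coordination'' is harmless, since the bases in Proposition \ref{thm31} are already chosen among right-normed words). The genuine gap is in your treatment of $[x_j,e_i]$ for $k+1\le i\le n$. The last line of \eqref{table1} is not the vacuous statement that $[x_j,e_i]\in N$; it asserts that $[x_j,e_i]$ is supported \emph{only} on the basis words $e_{i_1},\dots,e_{i_q}$ having the same structure as $e_i$, i.e., the same generator-content vector $(\alpha_{t,1},\dots,\alpha_{t,k})=(\alpha_{i,1},\dots,\alpha_{i,k})$. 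Your induction via $[x_j,[a,b]]=[[x_j,a],b]-[[x_j,b],a]$ only yields membership in $N$ and does not close on its own: the inductive hypothesis gives $[x_j,a]$ as an unknown element of $N$, so $[[x_j,a],b]$ involves products $[e_t,e_{i_m}]$ with arbitrary $\gamma$-coefficients, which mix words of different structures, and nothing in your argument controls the support of the result. In particular you have not even ruled out components of $[x_j,e_i]$ along the generators $e_1,\dots,e_k$.

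The paper's decisive step, absent from your proposal, is a weight-space argument rather than a word-length induction: writing $[x_j,e_i]=\sum_{t=k+1}^{n}\beta_{j,i}^{t}e_t$ and evaluating $[x_j,[e_i,x_p]]$ in two ways --- via \eqref{Right1} as $[[x_j,e_i],x_p]-[[x_j,x_p],e_i]=\sum_t\alpha_{t,p}\beta_{j,i}^{t}e_t$ (using $[x_j,x_p]=0$ and the already-established diagonal right action), and directly as $\alpha_{i,p}[x_j,e_i]$ --- one obtains
\begin{equation*}
\beta_{j,i}^{t}\left(\alpha_{t,p}-\alpha_{i,p}\right)=0,\qquad k+1\leq i,t\leq n,\ 1\leq j,p\leq k,
\end{equation*}
which is \eqref{eq10}. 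This forces $\beta_{j,i}^{t}=0$ whenever $e_t$ and $e_i$ differ in structure (and, combined with Remark \ref{rem15}, excludes generator components as well, since $\sum_p\alpha_{i,p}\ge 2$ while a generator $e_t$ has $\alpha_{t,p}=\delta_{t,p}$). In other words, $[x_j,\cdot]$ must preserve the simultaneous eigenspaces of the right action of $Q$ on $N$, and that is exactly what the last line of \eqref{table1} encodes. To repair your proof, replace the inconclusive induction for $[x_j,e_i]$ by this double evaluation of $[x_j,[e_i,x_p]]$.
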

\begin{proof} From Proposition \ref{thm31} we have the existence of a basis $\{e_1, e_2, \dots, e_n, x_1, x_2, \dots, x_k\}$ with the products (\ref{eq1}). Note that in Proposition \ref{thm31} we dealt with the generator basis elements of $N$. Remind that non generator basis elements of $N$ are given as the right normed words of the alphabet $E$.

Similarly, like in Proposition \ref{thm31} we have
$$[e_i,x_j]=[[[\cdots [e_{i_1},e_{i_2}],e_{i_{3}}], \dots],e_{i_t}],x_j]=\alpha_{i,j}e_i, \ k+1\leq i\leq n, 2\leq t \leq \tau,$$
where $\alpha_{i,j}=\alpha_{i_{1},j}+\alpha_{i_{2},j}+...+\alpha_{i_{t-1},j}+\alpha_{i_{t},j}$ with $\alpha_{i_{l},j}=\begin{cases}
1,\ \mbox{if} \ i_{l}=j,\\[1mm]
0,\ \mbox{if} \ i_{l}\neq j,\\[1mm]
\end{cases}$ for $1\leq l\leq t.$

Therefore, $[e_i,x_j]=\alpha_{i,j}e_i, \  k+1\leq i\leq n, \ 1\leq j\leq k.$

We set
$$[x_j,e_i]=\sum\limits_{t=k+1}^{n}\beta_{j,i}^te_t, \ k+1\leq i\leq n, \ 1\leq j\leq k.$$

Consider
$$[x_j,[e_i,x_p]]=[[x_j,e_i],x_p]-[[x_j,x_p],e_i]=\left[\sum\limits_{t=k+1}^{n}\beta_{j,i}^te_t,x_p\right]=
\sum\limits_{t=k+1}^{n}\alpha_{t,p}\beta_{j,i}^te_t.$$
On the other hand we have
$$[x_j,[e_i,x_p]]=\alpha_{i,p}[x_j,e_i]=\alpha_{i,p}\sum\limits_{t=k+1}^{n}\beta_{j,i}^te_t.$$
Therefore,
\begin{equation}\label{eq10}
\beta_{j,i}^t(\alpha_{t,p}-\alpha_{i,p})=0,\ k+1\leq i,t\leq n,\ 1\leq j,p\leq k.
\end{equation}

Let $\{e_{i_1}, e_{i_2}, \dots, e_{i_q}\}$ be a set of all non generator basis elements which have the same
structure as $e_i, \ \ k+1\leq i \leq n$. Then from (\ref{eq10}) we conclude that
 $$\beta_{j,i}^t=0,\ \mbox{for} \ k+1\leq t\leq n,\ \mbox{for}\ 1\leq j\leq k,\ t\neq i_1,i_2,\ldots,i_q.$$
Hence,
$$[x_j,e_i]=\sum\limits_{t=1}^{q}\beta_{j,i}^{i_t}e_{i_t}, \ \mbox{where} \ k+1\leq i\leq n,\ \ 1\leq j\leq k.$$
\end{proof}

\begin{rem} \label{prop38}\emph{}

\begin{itemize}
  \item Note that in Theorem \ref{thm35} the subspace $Q=Span_{\mathbb{C}}\{x_1, \dots, x_k\}$ forms an abelian subalgebra of $R$. Therefore, we get a right action of $Q$ on the nilradical $N$, which decomposes $N$ into one-dimensional root subspaces. In fact, similarly to Lemma 3.2 in \cite{Qobil} we can prove that this is nothing but a torus of the right action on $N$ (recall that the torus is an abelian subalgebra of the Lie algebra $Der(N)$ consisting of diagonalizable derivations). Since in non-Lie Leibniz algebras the anti-commutativity property is longer true the action of $Q$ on $N$ gives another root decomposition, which need not be diagonal action (the root subspaces with respect to the left action of $Q$ on $N$ happen to be non one-dimensional);
  \item If among the non generator basis elements of the nilradical $N$ there are no elements of the same structure, then $$[x_j,e_i]=\beta_{j,i}e_{i}, \quad k+1\leq i\leq n, \ 1\leq j\leq k;$$
  \item From (\ref{table1}) we conclude that $\alpha_{i,j}$ are uniquely determined by the structure of the nilradical $N;$
  \item Note that for any $i,\ 1\leq i\leq n$ there exists $j,\ 1\leq j\leq k$ such that $\alpha_{i,j} \neq 0.$
\end{itemize}

%
%
%
\end{rem}

Below we give the assertion of the theorem above for Lie algebras case. This case has been studied in  \cite{liecase}, where the solvable Lie algebras considered had the nilradical with the codimension equals to the number of generators of the nilradical.

\begin{cor} \label{cor36} Let $R=N\oplus Q$ be a solvable Lie algebra such that $dimQ=dim (N/N^2)=k.$ Then $R$ admits a basis $\{e_1, e_2, \dots, e_n, x_1, x_2, \dots, x_k\}$ such that the table of multiplications of $R$ has the following unique form:
\begin{equation}\label{eq15} \left\{\begin{array}{ll}
[e_i,e_j]=\sum\limits_{t=k+1}^{n}\gamma_{i,j}^te_t,& 1\leq i\neq j\leq n,\\[1mm]
[e_i,x_i]=-[x_i,e_i]=e_i,& 1\leq i\leq k,\\[1mm]
[e_i,x_j]=-[x_j,e_i]=\alpha_{i,j}e_i,& k+1\leq i\leq n,\ \ 1\leq j\leq k.\\[1mm]
\end{array}\right.\end{equation}
\end{cor}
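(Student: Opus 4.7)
The plan is to specialise Theorem \ref{thm35} to the Lie case and use anti-commutativity to collapse the remaining free parameters. Since $R$ is, in particular, a solvable Leibniz algebra satisfying the hypotheses of Theorem \ref{thm35}, we obtain a basis $\{e_1,\dots,e_n,x_1,\dots,x_k\}$ for which the brackets \eqref{table1} hold. The task is then to show that the Lie condition $[a,b]=-[b,a]$ forces all the apparent moduli ($b_i$'s and $\beta_{j,i}^{i_t}$'s) to be uniquely determined by the structure of the nilradical, and in particular by the integers $\alpha_{i,j}$.

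First I would deal with the $e_i$--$e_j$ products. Anti-commutativity gives $[e_i,e_i]=0$, so the sum in the first line of \eqref{table1} restricts to $i\neq j$, and the $\gamma_{i,j}^t$ are skew in $(i,j)$; these constants are fixed by the nilradical $N$. Next I would compare the second and third lines of \eqref{table1}: from $[e_i,x_i]=e_i$ and $[x_i,e_i]=(b_i-1)e_i$ the relation $[x_i,e_i]=-[e_i,x_i]$ forces $b_i-1=-1$, i.e.\ $b_i=0$ for every $1\le i\le k$. This immediately rewrites those two lines in the symmetric form $[e_i,x_i]=-[x_i,e_i]=e_i$ appearing in \eqref{eq15}.

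The central step is handling the non-generator basis elements. For $k+1\le i\le n$, Theorem \ref{thm35} gives $[e_i,x_j]=\alpha_{i,j}e_i$ and $[x_j,e_i]=\sum_{t=1}^{q}\beta_{j,i}^{i_t}e_{i_t}$, the sum running over those basis elements $e_{i_t}$ having the same ``structure'' as $e_i$. Imposing $[x_j,e_i]=-[e_i,x_j]=-\alpha_{i,j}e_i$ and comparing the coefficient of each $e_{i_t}$ (the $e_{i_t}$ being linearly independent) yields $\beta_{j,i}^{i_t}=0$ whenever $i_t\neq i$ and $\beta_{j,i}^{i}=-\alpha_{i,j}$. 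Thus the fifth line of \eqref{table1} collapses to $[x_j,e_i]=-\alpha_{i,j}e_i$, matching the third line of \eqref{eq15}. Finally, the $[x_i,x_j]$-products vanish already in \eqref{eq1}, so no further constraint is needed on $Q$.

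Uniqueness is then immediate: in the resulting table \eqref{eq15} the coefficients $\gamma_{i,j}^t$ come from the fixed nilradical $N$, while the $\alpha_{i,j}$ are determined by the structure of $N$ as recorded in Remark \ref{prop38} (third bullet), so no free parameter survives. I expect the only delicate point to be the argument that the $e_{i_t}$ appearing in the decomposition of $[x_j,e_i]$ are genuinely linearly independent, which is why the per-coefficient comparison in the previous paragraph is legitimate; but this is already built into Theorem \ref{thm35} through the description of the ``same structure'' basis elements, so no additional work is required beyond invoking it.
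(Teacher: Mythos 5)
Your proposal is correct and follows exactly the route the paper intends: the corollary is stated there without a separate proof, as an immediate specialization of Theorem \ref{thm35}, and your use of anti-commutativity to force $b_i=0$ and to collapse $[x_j,e_i]=\sum_t\beta_{j,i}^{i_t}e_{i_t}$ to $-\alpha_{i,j}e_i$ (comparing coefficients of the linearly independent $e_{i_t}$) is precisely the omitted verification. Your uniqueness remark, that $\gamma_{i,j}^t$ and $\alpha_{i,j}$ are fixed by the nilradical as noted in Remark \ref{prop38}, matches the paper's reasoning as well.
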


Below we give a result which states that if the nilradical is non split Lie algebra under the condition of the theorem on the
 codimension of the nilradical then the Leibniz algebras described in the theorem become Lie algebras.

\begin{thm} \label{lem39} Let $R=N\oplus Q$ be a solvable Leibniz algebra such that $dimQ=dim (N/N^2)=k$ and $N$
be a non split Lie algebra. Then $R$ is a Lie algebra with the table of multiplications $(\ref{eq15})$.
\end{thm}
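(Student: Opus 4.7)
My approach is to refine the structure given by Theorem \ref{thm35} using the non-splitness of $N$ and the fact that $N$ is Lie. Working in the basis of Theorem \ref{thm35}, I need to show (a) each parameter $b_i$ from (\ref{table1}) is zero, and (b) the sum $[x_j,e_i]=\sum_t\beta_{j,i}^{i_t}e_{i_t}$ collapses to $-\alpha_{i,j}e_i$ for every non-generator basis element $e_i$. Combined with $[e_i,e_j]=-[e_j,e_i]$ from the Lie structure of $N$ and with $[x_i,x_j]=0$ from Proposition \ref{thm31}, these will give anti-commutativity of every basic bracket; since an anti-commutative right Leibniz bracket satisfies the Jacobi identity, $R$ will be Lie with the table (\ref{eq15}).

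For (a), sum the two mixed products in (\ref{table1}) to get $[e_i,x_i]+[x_i,e_i]=b_ie_i$. Since $[a,b]+[b,a]\in Ann_r(R)$ in any right Leibniz algebra, $b_i=1$ would force $e_i\in Ann_r(R)$, i.e.\ $[z,e_i]=0$ for every $z\in R$. In particular $[e_j,e_i]=0$ for all $j$, and anti-commutativity inside the Lie algebra $N$ then gives $[e_i,e_j]=0$, so $e_i$ is central in $N$. Centrality kills every right-normed word in the generators that involves $e_i$, so $N=\mathbb{C}e_i\oplus N'$ where $N'$ is the Lie subalgebra generated by $\{e_j:1\le j\le k,\ j\ne i\}$. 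Both summands are ideals ($\mathbb{C}e_i$ by centrality; $N'$ because $N^2\subseteq N'$ and $[e_i,N']=0$), and they intersect trivially since $e_i\notin N^2$. Non-splitness of $N$ forces $N$ to be non-abelian, hence $k\ge 2$ and $N'\ne 0$, producing a genuine ideal decomposition of $N$---a contradiction. Therefore $b_i=0$ for every $i$, and in particular $[x_j,e_l]=-\delta_{jl}e_l$ on every generator $e_l$.

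For (b), I induct on the length of the right-normed word expressing the non-generator basis element $e_i$, writing $e_i=[e',e_l]$ with $e_l$ a generator and $e'$ a shorter right-normed basis element. Extending the paper's notation by $\alpha_{l,j}:=\delta_{jl}$ on generators yields the additive identity $\alpha_{i,j}=\alpha_{e',j}+\delta_{jl}$. Applying the right Leibniz identity (\ref{Right1}) with $x=x_j$, $y=e'$, $z=e_l$ gives $[x_j,e_i]=[[x_j,e'],e_l]-[[x_j,e_l],e']$. Substituting $[x_j,e']=-\alpha_{e',j}e'$ from the induction hypothesis and $[x_j,e_l]=-\delta_{jl}e_l$ from the base case, and using Lie anti-commutativity $[e_l,e']=-[e',e_l]=-e_i$ inside $N$, collapses this to $-(\alpha_{e',j}+\delta_{jl})e_i=-\alpha_{i,j}e_i$. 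Thus every coefficient $\beta_{j,i}^{i_t}$ in Theorem \ref{thm35} with $i_t\ne i$ vanishes and $\beta_{j,i}^i=-\alpha_{i,j}$.

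Putting (a) and (b) together, every basic cross-bracket satisfies $[a,b]+[b,a]=0$: inside $N$ by hypothesis, between $N$ and $Q$ by (a) and (b), and inside $Q$ by Proposition \ref{thm31}. Hence $R$ is anti-commutative, and the right Leibniz identity then becomes the Jacobi identity, so $R$ is a Lie algebra with table exactly (\ref{eq15}). The main obstacle is the decomposition step in (a)---extracting a non-trivial ideal direct-sum decomposition of $N$ from the centrality of a single generator; once that is secured, the induction in (b) and the final anti-commutativity tally are routine.
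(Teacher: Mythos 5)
Your proof is correct and takes essentially the same route as the paper's: you force $b_i=0$ from $b_ie_i=[e_i,x_i]+[x_i,e_i]\in Ann_r(R)$ together with non-splitness (your explicit decomposition $N=\mathbb{C}e_i\oplus N'$ merely fills in the paper's parenthetical ``otherwise the nilradical would be split''), and you obtain $[x_j,e_i]=-\alpha_{i,j}e_i$ by the same telescoping application of the Leibniz identity along the right-normed word, using anti-commutativity of the Lie nilradical at each step, before closing with the standard fact that an anti-commutative Leibniz bracket is Lie. The one caveat, shared equally by the paper's own proof (note that Theorem \ref{thm33} explicitly adds a ``non abelian'' hypothesis), is the degenerate case $N\cong\mathbb{C}$, where your claim that non-splitness forces $N$ to be non-abelian fails and the algebra $[e_1,x_1]=e_1$, $[x_1,e_1]=0$ would be a counterexample unless one adopts the convention that excludes this case.
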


\begin{proof} Due to Theorem \ref{thm35} we have the existence of a basis $\{e_1, e_2, \dots, e_n, x_1, x_2, \dots, x_k\}$
such that the table of multiplications of $R$ has the form (\ref{table1}). Since the nilradical of $R$ is non split, the generator
 basis elements of the nilradical $N$ are not in the right annihilator, i.e., $e_1, e_2, \dots, e_k \not \in Ann_r(R)$
 (otherwise the nilradical $N$ would be split).

From $b_ie_i=[e_i,x_i]+[x_i,e_i]\in Ann_r(R)$ we get $b_i=0,\ 1 \leq i\leq k.$

Similarly as in the proof of Theorem \ref{thm35} we can assume that non generator basis elements $e_i,\ k+1\leq i\leq n$ have the form:
$e_i=[[\cdots [[e_{i_1},e_{i_2}],e_{i_{3}}],\cdots],e_{i_m}],\ \mbox{where}\ 2\leq m\leq \tau$.

Therefore,

 $[x_j,e_i]=[x_j,[[\cdots [[e_{i_1},e_{i_2}],e_{i_{3}}],\cdots,e_{i_{m-1}}],e_{i_m}]]$

\ $$=[[x_j,[[\cdots [[e_{i_1},e_{i_2}],e_{i_{3}}],\cdots ],e_{i_{m-1}}]],e_{i_m}]-[[x_j,e_{i_m}],[[\cdots [[e_{i_1},e_{i_2}],e_{i_{3}}],\cdots ],e_{i_{m-1}}]]$$
$$=[[x_j,[[\cdots [[e_{i_1},e_{i_2}],e_{i_{3}}],\cdots ],e_{i_{m-1}}]],e_{i_m}]-\delta_{j,i_m}[e_{i_m},[[\cdots [[e_{i_1},e_{i_2}],e_{i_{3}}],\cdots ],e_{i_{m-1}}]]$$
$$=[[x_j,[[\cdots [[e_{i_1},e_{i_2}],e_{i_{3}}],\cdots ],e_{i_{m-1}}]],e_{i_m}]+\delta_{j,i_m}[[\cdots [[e_{i_1},e_{i_2}],e_{i_{3}}],\cdots ],e_{i_{m-1}}],e_{i_m}]$$
$$ \cdots $$
\qquad\quad \ \ \ $=(\delta_{j,i_{m}}+\delta_{j,i_{m-1}}+\dots+\delta_{j,i_{2}}+\delta_{j,i_{1}})[[\cdots [[e_{i_1},e_{i_2}],e_{i_{3}}],\cdots,e_{i_{m-1}}],e_{i_m}]$

\quad\quad \ \ $=(\delta_{j,i_{m}}+\delta_{j,i_{m-1}}+\dots+\delta_{j,i_{2}}+\delta_{j,i_{1}})e_{i},$

where
$$\delta_{j,i_{l}}=\begin{cases}
-1,\ \mbox{if} \ i_{l}=j,\\[1mm]
0,\ \mbox{if} \ i_{l}\neq j,\\[1mm]
\end{cases} 1\leq l\leq m.$$

Consequently, $$\delta_{j,i_{m}}+\delta_{j,i_{m-1}}+ \dots+\delta_{j,i_{2}}+\delta_{j,i_{1}}=-\alpha_{i,j},\ \ \ 1\leq j\leq k,\ k+1\leq i\leq n,\ 2\leq m\leq  \tau. $$

Thus, we obtain $[x_j,e_i]=-\alpha_{i,j}e_i,\ \ \ k+1\leq i\leq n,\ 1\leq j\leq k$.
\end{proof}

\section{The uniqueness of solvable Leibniz extensions of the nilradical with the maximal possible codimension
}

\

In this section we treat the uniqueness of the solvable Leibniz algebras of the form $R=N\oplus Q$ with $dimQ=dim (N/N^2)=k$. Here we keep using the basis of Theorem \ref{thm35}.
Remind that $\{e_1, e_2, \dots, e_{k_1}\}\cap Ann_r(N)=\emptyset$ and $\{e_{k_1+1}, e_{k_1+2}, \dots, e_{k}\}\subset Ann_r(N).$

\begin{prop} \label{cor311} Let $R=N\oplus Q$ be a solvable Leibniz algebra such that $dimQ=dim (N/N^2)=k$ and $N$ be
 a non split nilradical. Then $R$ is isomorphic to a unique algebra with the following table of multiplications
\begin{equation}\label{eq22}\left\{\begin{array}{ll}
[e_i,e_j]=\sum\limits_{t=k+1}^{n}\gamma_{i,j}^te_t,& 1\leq i, j\leq n,\\[1mm]
[e_i,x_i]=e_i,& 1\leq i\leq k,\\[1mm]
[x_i,e_i]=-e_i, & 1\leq i\leq k_1,\\[1mm]
[e_i,x_j]=\alpha_{i,j}e_i,& k+1\leq i\leq n,\ \ 1\leq j\leq k,\\[1mm]
[x_j,e_i]=\sum\limits_{t=1}^{q}\beta_{j,i}^{i_t}e_{i_t}, &  k+1\leq i\leq n, \ 1\leq j\leq k,\\[1mm]
\end{array}\right.\end{equation}
where $\alpha_{i,j}$ is the number of involvement of the generator basis
element $e_j$ in the writing of the non generator basis element
$e_i$.
\end{prop}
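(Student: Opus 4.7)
The plan is to start from the canonical basis provided by Theorem \ref{thm35} and then invoke two ingredients---the non-split hypothesis on $N$ and the universal Leibniz identity $[a,b]+[b,a]\in Ann_r(L)$---to fix the value $b_i=0$ on the non-annihilator generators, following the pattern already used in the proof of Theorem \ref{lem39}.

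Concretely, applying the identity above to $a=e_i$ and $b=x_i$ with the products \eqref{table1} gives
\[
b_i\,e_i=[e_i,x_i]+[x_i,e_i]\in Ann_r(R).
\]
Since $e_i\in N$ we have $b_i e_i\in Ann_r(R)\cap N\subseteq Ann_r(N)$, because any element of $N$ right-annihilated by all of $R$ is in particular right-annihilated by all of $N$. The non-split hypothesis guarantees that the generators $e_1,\dots,e_{k_1}$ lie outside $Ann_r(N)$, forcing $b_i=0$, and hence $[x_i,e_i]=-e_i$, for $1\le i\le k_1$, which is exactly the form displayed in \eqref{eq22}.

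For the uniqueness statement I would take two solvable extensions $R$ and $R'$ of $N$ both written in the form \eqref{eq22} and exhibit an explicit isomorphism. Since the structure constants $\gamma_{i,j}^t$ are intrinsic to $N$ and the coefficients $\alpha_{i,j}$ are determined by the structure of $N$ (Remark \ref{prop38}), only the parameters $\beta_{j,i}^{i_t}$ can potentially differ. Using the Leibniz identities of type ${\mathcal L}(x_j,e_i,e_p)=0$ the $\beta$-coefficients at non-generator basis elements are expressed recursively through those at generators via the $\gamma$'s; the remaining freedom at the generator level is then absorbed by translations $x_j\mapsto x_j+\sum c_{j,i}e_{i}$ lying in suitable right-$Q$ root spaces of $N$, together with scalings of the $e_i$ compatible with the already normalised products $[e_i,x_i]=e_i$.

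The main obstacle is the uniqueness half: verifying that every residual degree of freedom in the $\beta_{j,i}^{i_t}$ is exhausted by the admissible basis changes. The non-split hypothesis on $N$ is indispensable here because, as illustrated by Proposition \ref{propmu1}, relaxing it produces genuine moduli of non-isomorphic solvable extensions; the bookkeeping core of the argument consists in keeping track of which non-generator basis vectors $e_{i_t}$ share a given $Q$-weight vector $(\alpha_{i,1},\dots,\alpha_{i,k})$ with $e_i$, and translating $x_j$ weight-component by weight-component to kill the corresponding $\beta_{j,i}^{i_t}$ without disturbing the previously normalised products.
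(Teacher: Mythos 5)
There is a genuine gap in the first half: you only determine $b_i$ for $1\le i\le k_1$, but the table \eqref{eq22} also asserts, implicitly, that $[x_i,e_i]=0$ (i.e.\ $b_i=1$) for the generators $e_{k_1+1},\dots,e_k$ which lie in $Ann_r(N)$, and your argument says nothing about these. Your annihilator reasoning $b_ie_i\in Ann_r(R)$ forces $b_i=0$ only when $e_i\notin Ann_r(R)$, which cannot be derived for $i>k_1$; in fact the correct value there is the \emph{opposite} one, $b_i=1$. The paper settles this as follows: for $k_1+1\le i\le k$ and $1\le j\le k_1$ one has $[e_i,e_j]=[e_i,e_j]+[e_j,e_i]\in Ann_r(R)$ (since $[e_j,e_i]=0$), so the Leibniz identity gives
\begin{equation*}
0=[x_i,[e_i,e_j]]=[[x_i,e_i],e_j]-[[x_i,e_j],e_i]=(b_i-1)[e_i,e_j],
\end{equation*}
and the non-split hypothesis supplies some $e_j$, $1\le j\le k_1$, with $[e_i,e_j]\neq0$, forcing $b_i=1$. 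This is where non-splitness does its real work; note also that your claim that non-splitness ``guarantees that $e_1,\dots,e_{k_1}$ lie outside $Ann_r(N)$'' inverts the logic --- that is the indexing convention fixed before the proposition, not a consequence of the hypothesis. Without this step the algebras with $b_i=0$ and $b_i=1$ for some $i>k_1$ would both survive and uniqueness would fail.

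The second half is also misdirected, though it contains the right kernel. Your recursion observation is correct, but there is no ``remaining freedom at the generator level'' to absorb: Theorem \ref{thm35} (via Proposition \ref{thm31}) already gives $[x_j,e_i]=0$ for generators with $i\neq j$, and $[x_i,e_i]=(b_i-1)e_i$ is pinned once the $b_i$ are determined as above. Writing a non-generator $e_i=[w,e_{i_m}]$ with $w$ a shorter right-normed word, the expansion $[x_j,[w,e_{i_m}]]=[[x_j,w],e_{i_m}]-[[x_j,e_{i_m}],w]$ then determines every $\beta_{j,i}^{i_t}$ by induction on the word length, using only the (intrinsic) $\gamma_{i,j}^t$. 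Hence the whole table \eqref{eq22} is forced by $N$ and uniqueness is immediate --- this is exactly the paper's argument. Your proposed translations $x_j\mapsto x_j+\sum_i c_{j,i}e_i$ and weight-by-weight normalization are not only unnecessary but problematic: such changes generically destroy the already-normalized products $[x_i,x_j]=0$ and $[x_j,e_i]=0$, so that route would require substantial extra verification which you do not supply. Repairing the $b_i$ determination for $i>k_1$ and replacing the normalization scheme by the forcing recursion yields a complete proof.
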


\begin{proof} From Theorem \ref{thm35} we have the table of multiplications (\ref{table1}). It is easy to see that the structure constants $\alpha_{i,j}$ are uniquely determined by the table of multiplications of the nilradical $N$.

For $1\leq i\leq k_1$ we have $b_ie_i=[e_i,x_i]+[x_i,e_i]\in Ann_r(R)$, which imply $b_i=0$ and $[x_i, e_i]=-e_i,$ for $1 \leq i\leq k_1$.

Observe that for $i$ and $j$ such that $k_1+1 \leq i \leq k, 1 \leq j \leq k_1$ we also have $[e_i,e_j]=[e_i,e_j]+[e_j,e_i] \in Ann_r(R)$. Therefore,
$$0=[x_i,[e_i,e_j]]=[[x_i,e_i],e_j]-[[x_i,e_j],e_i]=(b_i-1)[e_i,e_j].$$
Since the nilradical $N$ is non split, this yields that for any $e_{i}$, where $k_1+1 \leq i \leq k$ there exists
$e_j, \ 1 \leq j \leq k_1$ such that $[e_i,e_j]\neq 0$. Hence, $b_i=1$ and $[x_i,e_i]=0, \ k_1+1\leq i\leq k$.

Let us take a non generator basis element $e_i$, where $k+1\leq i\leq n$ in the form
$e_i=[[\cdots [[e_{i_1},e_{i_2}],e_{i_{3}}],\cdots],e_{i_m}],\ 2\leq m\leq \tau$. Then the uniqueness of the structure constants $\beta_{j,i}^{i_t}$ in the products $[x_j,e_i]$ for $k+1\leq i\leq n, \ 1\leq j\leq k$ follows from the uniqueness of $\gamma_{i,j}^t$, the induction by $m$ and the equality

$[x_j,e_i]=[x_j,[[\cdots [[e_{i_1},e_{i_2}],e_{i_{3}}],\cdots,e_{i_{m-1}}],e_{i_m}]]$

\qquad \ \ \ $=[[x_j,[[\cdots [[e_{i_1},e_{i_2}],e_{i_{3}}],\cdots ],e_{i_{m-1}}]],e_{i_m}]-[[x_j,e_{i_m}],[[\cdots [[e_{i_1},e_{i_2}],e_{i_{3}}],\cdots ],e_{i_{m-1}}]].$
\end{proof}

Let us now consider more general case, where the nilradical of the solvable Leibniz algebra is split and without abelian ideals.
%

\begin{thm}\label{thm33} Let $R=N\oplus Q$ be a solvable Leibniz algebra with $dimQ=dim (N/N^2)=k$ and $N=\bigoplus\limits_{t=1}^{s}N_t$, where $N_t$ is non split and non abelian ideal of $N$ with $dimN_t=n_t$.
Then
\begin{itemize}
  \item the algebra $R$ admits a basis $\{e_1^t, e_2^t, \dots, e_{k_t^1}^t, e_{k_t^1+1}^t, e_{k_1^1+2}^t, \dots, e_{k_t}^t, x_1^t, \dots,x_{k_t}^t\},\ 1\leq t\leq s,$
such that  the table of multiplications of $R$ has the following form:
\begin{equation}\label{table22}\left\{\begin{array}{ll}
[e_i^t,e_j^t]=\sum\limits_{p=k_t+1}^{n_1}\gamma_{i,j}^{t,p}e_t,& 1\leq i, j\leq n_t,\ 1\leq t\leq s,\\[1mm]
[e_i^t,x_i^t]=e_i^t,& 1\leq i\leq k_t,\ 1\leq t\leq s,\\[1mm]
[x_i^t,e_i^t]=-e_i^t,&1\leq i\leq k_t^1,\ 1\leq t\leq s,\\[1mm]
[e_i^t,x_j^t]=\alpha_{i,j}^te_i^t,& k_t\leq i\leq n_t,\  1\leq j\leq k_t, \ 1\leq t\leq s,\\[1mm]
[x_j^t,e_i^t]=\sum\limits_{l=1}^{q}\beta_{t,j,i}^{i_l}e_{i_l}^t, &  k_t+1\leq i\leq n_t, \ 1\leq j\leq k_t,\ 1\leq t\leq s;\\[1mm]
\end{array}\right.\end{equation}

 \item  $R$ is a unique, up to isomorphism, algebra and it has the form $R=\bigoplus\limits_{t=1}^{s}R_t,$ with $R_t=N_t\oplus Q_t,$ where $N_t$ is the nilradical of $R_t$ and $Q_t$ is the subspace complementary to $N_t$.
\end{itemize}
\end{thm}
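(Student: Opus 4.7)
The plan is to reduce the theorem to a component-wise application of Proposition~\ref{cor311}, with the main work being to show that the solvable extension respects the direct-sum decomposition of $N$. As a first step, I would invoke Theorem~\ref{thm35} to obtain a basis $\{e_1,\ldots,e_n,x_1,\ldots,x_k\}$ of $R$ with multiplication table \eqref{table1}. Since $N=\bigoplus_{t=1}^{s}N_t$ is a direct sum of ideals, one has $N/N^2=\bigoplus_t N_t/N_t^2$, so the $k$ generators of $N$ partition as generators of the individual $N_t$'s, and the non-generator basis elements of $N$ (being right-normed words in these generators) partition accordingly because $[N_t,N_{t'}]\subseteq N_t\cap N_{t'}=0$ for $t\neq t'$. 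Relabel the generator and non-generator basis elements of $N_t$ as $e_1^t,\ldots,e_{n_t}^t$, and the associated elements of $Q$ (those $x_i$ with $[e_i,x_i]=e_i$ and $e_i\in N_t$) as $x_1^t,\ldots,x_{k_t}^t$.

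The key technical step is to show that all cross-action products between different components vanish, i.e.\ $[e_i^t,x_j^{t'}]=0$ and $[x_j^{t'},e_i^t]=0$ whenever $t\neq t'$. For the right action this follows immediately from Theorem~\ref{thm35} together with the last bullet of Remark~\ref{prop38}: the coefficient $\alpha_{i,j}$ counts occurrences of the generator $e_j^{t'}$ in a right-normed word lying entirely in $N_t$, hence is zero. The left action requires induction on the word length of $e_i^t$. For generators this is part of the basis obtained in Proposition~\ref{thm31}. For a non-generator written as $e_i^t=[u,e_{i_m}^t]$ with $u\in N_t$ and $e_{i_m}^t$ a generator of $N_t$, the Leibniz identity gives
\[
[x_j^{t'},e_i^t]=[[x_j^{t'},u],e_{i_m}^t]-[[x_j^{t'},e_{i_m}^t],u],
\]
and both terms vanish: the first by induction on word length, the second because $[x_j^{t'},e_{i_m}^t]=0$ already at the generator level. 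The products $[x_i^t,x_j^{t'}]$ are identically zero by Proposition~\ref{thm31}.

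Once these vanishings are in place, setting $Q_t=\mathrm{Span}\{x_1^t,\ldots,x_{k_t}^t\}$ and $R_t=N_t\oplus Q_t$ makes each $R_t$ a subalgebra of $R$ whose nilradical is $N_t$ and which satisfies $\dim Q_t=\dim(N_t/N_t^2)=k_t$ with non-split $N_t$. Proposition~\ref{cor311} then applies to $R_t$ and fixes the remaining structure constants: the coefficients $b_i^t$ equal $0$ for the non-annihilator generators ($1\le i\le k_t^1$) and $1$ for the annihilator generators, and the constants $\beta_{t,j,i}^{i_l}$ in $[x_j^t,e_i^t]$ are uniquely determined. The vanishing of all cross-products then yields the direct-sum decomposition $R=\bigoplus_{t=1}^{s} R_t$ as Leibniz algebras, and the uniqueness of $R$ follows from the uniqueness of each $R_t$ granted by Proposition~\ref{cor311}. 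The main obstacle is the induction on word length for the left cross-action $[x_j^{t'},e_i^t]$; once that is in hand, the rest is a matter of carefully tracking the decomposition through the basis and the Leibniz identity.
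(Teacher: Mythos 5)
Your proposal is correct and follows essentially the same route as the paper: obtain the basis and table from Theorem~\ref{thm35}, show all cross-component products $[e_i^{t_1},x_j^{t_2}]$ and $[x_j^{t_2},e_i^{t_1}]$ vanish (the right action because $\alpha_{i,j}$ counts occurrences of $e_j^{t_2}$ in a word lying in $N_{t_1}$, the left action by the Leibniz-identity recursion), and then apply Proposition~\ref{cor311} to each $R_t=N_t\oplus Q_t$ to fix the remaining constants and get uniqueness. Your explicit induction on word length for $[x_j^{t'},e_i^t]$ merely fills in a step the paper asserts tersely, so there is no substantive difference.
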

\begin{proof} Consider the basis $\{e_1^t, e_2^t, \dots, e_{k_t}^t,e_{k_t+1}^t,\dots, e_{n_t}^t\}$ of the ideal $N_t,$ where $\{e_1^t, e_2^t, \dots, e_{k_t}^t\}$ are generators of $N_t$, $1\leq t \leq s$ and such that $\{e_1^t, e_2^t, \dots, e_{k_t^1}^t\}\cap Ann_r(N_t)=\emptyset,$ $\{e_{k_t^1+1}^t, e_{k_1^1+2}^t, \dots, e_{k_t}^t\}\subset Ann_r(N_t).$ From the conditions of the theorem and Theorem \ref{thm35} we get the existence of the basis
$\{e_1^t, e_2^t, \dots, e_{k_t}^t,e_{k_t+1}^t,\dots, e_{n_t}^t,x_1^t, \dots,x_{k_t}^t\}$, where
$\{e_1^t, e_2^t, \dots, e_{k_t}^t,e_{k_t+1}^t,\dots, e_{n_t}^t\}$ is a basis of $N_t,$ and
$\{e_1^t, e_2^t, \dots, e_{k_t}^t\}$ are generators of $N_t$, $1\leq t \leq s.$ Obviously, in
this basis $R$ has the following products
\begin{equation}\left\{\begin{array}{ll}\label{3.16}
[e_i^t,x_i^t]=e_i^t,& 1\leq i\leq k_t,\  1\leq t\leq s,\\[1mm]
[x_i^t,e_i^t]=(b_i^t-1)e_i^t,& b_i^t \in \{0,1\},\  1\leq i\leq k_t,\ 1\leq t\leq s,\\[1mm]
[e_i^t,x_j^t]=\alpha_{i,j}^te_i^t,& k_t+1\leq i\leq n_t,\  1\leq j\leq k_t,\\[1mm]
[x_i^{t_1},x_j^{t_2}]=0,&1\leq i\leq k_{t_1},\ 1\leq j\leq k_{t_2},\  1\leq t_1,t_2\leq s.\\[1mm]
\end{array}\right.\end{equation}

Since $[N_{t_1},N_{t_2}]=0$ for $t_1\neq t_2$, then in $e_i^{t_1}$ with $k_{t_1}+1\leq i\leq n_{t_1}$ the elements $e_j^{t_2},$ $1\leq j\leq k_{t_2}$ are not involved. Therefore, $[e_i^{t_1},x_j^{t_2}]=[x_j^{t_2},e_i^{t_1}]=0$.

Using the products (\ref{3.16}) arguing similarly to that in the proof of Theorem \ref{thm35} we obtain
$$[x_j^t,e_i^t]=\sum\limits_{p=1}^{q}\beta_{j,i}^{i_p,t}e_{i_p}^t \ , \quad  k_t+1\leq i\leq n_t, \ 1\leq j\leq k_t,\ 1\leq t\leq s.$$

Thus, for each $t$, $1\leq t\leq s$ the subspace $R_t$ spanned by the basis elements $\{e_1^t, e_2^t,
\dots, e_{k_t}^t,e_{k_t+1}^t,\dots, e_{n_t}^t,x_1^t,\dots,x_{k_t}^t\}$ forms an ideal of $R$. Moreover,
$R=\bigoplus\limits_{t=1}^{s}R_t$ and $R_t$ has the form $R_t=N_t\oplus Q_t$, where $N_t$ is the nilradical of
$R_t$ and $Q_t=Span\{x_1^t,\dots,x_{k_t}^t\}$ is the subspace complementary to $N_t$, which is an abelian
subalgebra of $R_t$. The uniqueness of $R$ and (\ref{table22}) is imediate from Proposition \ref{cor311}.
\end{proof}

For the case of solvable Leibniz algebras whose nilradical is Lie algebra without abelian ideals we have the following result.

\begin{cor}\label{lie}
Let $R=N\oplus Q$ be a solvable Leibniz algebra with $dimQ=dim (N/N^2)=k$ and $N=\bigoplus\limits_{t=1}^{s}N_t$, where $N_t$ is non
split non abelian Lie ideal of $N$. Then

\begin{itemize}
  \item algebra $R$ admits a basis $\{e_1^t,\dots,e_{n_t}^t,x_1^t,\dots,x_{k_t}^t\},\ 1\leq t\leq s,$
such that  the table of multiplications of $R$ has the following form:
\begin{equation}\label{table2}\left\{\begin{array}{ll}
[e_i^t,e_j^t]=\sum\limits_{p=k_t+1}^{n_1}\gamma_{i,j}^{t,p}e_t,& 1\leq i, j\leq n_t,\ 1\leq t\leq s,\\[1mm]
[e_i^t,x_i^t]=-[x_i^t,e_i^t]=e_i^t,& 1\leq i\leq k_t,\ 1\leq t\leq s,\\[1mm]
[e_i^t,x_j^t]=-[x_j^t,e_i^t]=\alpha_{i,j}^te_i^t,& k_t\leq i\leq n_t,\  1\leq j\leq k_t, \ 1\leq t\leq s;\\[1mm]
\end{array}\right.\end{equation}

 \item $R$ is a unique, up to isomorphism, Lie algebra given as follows: $R=\bigoplus\limits_{t=1}^{s}R_t,$ with $R_t=N_t\oplus Q_t,$ where $N_t$ is the nilradical of $R_t$ and $Q_t$ is the subspace complementary to $N_t$.
\end{itemize}
\end{cor}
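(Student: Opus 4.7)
The plan is to derive this corollary as a direct specialization of Theorem \ref{thm33} together with Theorem \ref{lem39}. First I would apply Theorem \ref{thm33} to $R$: since the nilradical $N = \bigoplus_{t=1}^s N_t$ is decomposed into non-split non-abelian ideals and $\dim Q = \dim(N/N^2)$, the theorem provides a basis in which the multiplication table (\ref{table22}) holds and gives the ideal decomposition $R = \bigoplus_{t=1}^s R_t$ with $R_t = N_t \oplus Q_t$, $Q_t = \mathrm{Span}\{x_1^t,\dots,x_{k_t}^t\}$, and $N_t$ being the nilradical of $R_t$.

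Next I would observe that each summand $R_t$ satisfies the hypotheses of Theorem \ref{lem39}. Indeed, $R_t$ is a solvable Leibniz algebra with nilradical $N_t$ that is a non-split Lie algebra, and $\dim Q_t = k_t = \dim(N_t/N_t^2)$ by construction. Applying Theorem \ref{lem39} to each $R_t$ individually forces $R_t$ to be a Lie algebra with multiplication table of the shape (\ref{eq15}); in particular the brackets $[x_i^t,e_i^t]$ and $[x_j^t,e_i^t]$ for non-generator $e_i^t$ become $-e_i^t$ and $-\alpha_{i,j}^t e_i^t$ respectively, so that (\ref{table22}) collapses to (\ref{table2}). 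Since $[R_{t_1},R_{t_2}] = 0$ for $t_1 \ne t_2$ (by Theorem \ref{thm33}) and each $R_t$ is a Lie algebra, the whole $R = \bigoplus R_t$ is a Lie algebra.

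For the uniqueness claim I would simply invoke the uniqueness part of Theorem \ref{thm33}: any two solvable Leibniz algebras satisfying the hypotheses share the same normal form (\ref{table22}), and once the Lie condition has been imposed the remaining structure constants $\alpha_{i,j}^t$ and $\gamma_{i,j}^{t,p}$ are determined (the $\alpha_{i,j}^t$ by the structure of $N_t$, and the $\gamma_{i,j}^{t,p}$ are those of the given nilradical). Hence the Lie algebra $R$ is unique up to isomorphism, completing the proof.

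I do not expect any genuine obstacle here; the corollary is essentially a restatement of Theorem \ref{thm33} in the Lie setting, with the key additional input being Theorem \ref{lem39}, which is precisely the tool that eliminates the parameters $b_i^t$ and converts the one-sided actions $[x_j^t,e_i^t] = \sum \beta_{j,i}^{i_l,t} e_{i_l}^t$ into the antisymmetric form $-\alpha_{i,j}^t e_i^t$. The only point that requires a moment of care is checking that Theorem \ref{lem39} may indeed be applied componentwise to each $R_t$, which follows because $R_t$ inherits the dimensional condition $\dim Q_t = \dim(N_t/N_t^2)$ and $N_t$ is a non-split Lie algebra by assumption.
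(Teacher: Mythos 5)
Your proposal is correct and follows essentially the same route as the paper, which states Corollary \ref{lie} without a separate proof precisely because it is the specialization of Theorem \ref{thm33} to a Lie nilradical: one applies Theorem \ref{thm33} to obtain $R=\bigoplus_{t=1}^{s}R_t$ with table \eqref{table22}, then Theorem \ref{lem39} componentwise to each $R_t$ (legitimate, as you verify, since $N_t$ is a non-split Lie nilradical of $R_t$ and $\dim Q_t=k_t=\dim(N_t/N_t^2)$) to kill the parameters $b_i^t$ and antisymmetrize $[x_j^t,e_i^t]=-\alpha_{i,j}^t e_i^t$, with uniqueness inherited from Proposition \ref{cor311} via Theorem \ref{thm33}. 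Your closing observations — that $[R_{t_1},R_{t_2}]=0$ makes the componentwise Lie property global, and that the dimensional hypothesis passes to each summand — are exactly the two points needed to make the paper's implicit deduction rigorous.
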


\begin{rem} Note that the condition that the underlying field is $\mathbb{C}$ in Proposition \ref{cor311}, Theorem \ref{thm33} and Corollary \ref{lie} is essential. Indeed, if we consider the three-dimensional Heisenberg Lie algebra
\begin{equation}
\label{exm1} H_1: \quad [e_2,e_3]=-[e_3, e_2]=e_1,
\end{equation}
as the nilradical then there exists, up to isomorphism, a unique complex solvable five-dimensional Lie algebra with the nilradical $H_1$ having the table of multiplications as follows
\begin{equation}\label{exm2}
[e_2,e_3]=e_1,\ [e_2,x_1]=e_2,\ [e_1,x_1]=e_1,\ [e_3,x_2]=e_3,\ [e_1,x_2]=e_1.
\end{equation}

However, if we take $\mathbb{R}$ as the underlying field, then according to a result of the paper \cite{mubor} there are two non isomorphic solvable Lie algebras

$\emph{g}_{5,36}: \ [e_2,e_3]=e_1,\ [e_1,e_4]=e_1,\ \ [e_2,e_4]=e_2,\ [e_2,e_5]=-e_2,\ [e_3,e_5]=e_3,$\\
and

$\emph{g}_{5,37}: \ [e_2,e_3]=e_1,\ [e_1,e_4]=2e_1,\ [e_2,e_4]=e_2,\ [e_3,e_4]=e_3,\ \ \ [e_2,e_5]=-e_3,\ [e_3,e_5]=e_2$\\
one of them, namely $\emph{g}_{5,36}$, being isomorphic to \eqref{exm2} via the
base change $x_1=e_4,\ x_2=e_4+e_5.$


\end{rem}

Finally, below we give a result on solvable Leibniz algebras with the nilradical in more general form $N=\bigoplus\limits_{t=1}^{s}N_t\oplus\mathbb{C}^p$.

\begin{thm} \label{thm3.13} Let $R=N\oplus Q$ be a solvable Leibniz algebra with $dimQ=dim (
N/N^2)=k+p$ and $N=\bigoplus\limits_{t=1}^{s}N_t\oplus\mathbb{C}^p$, where $N_t$ is non
split non abelian ideal of $N$. Then

\begin{itemize}
  \item the algebra $R$ admits a basis $\{e_1^t, e_2^t, \dots, e_{k_t^1}^t, e_{k_t^1+1}^t, e_{k_1^1+2}^t, \dots, e_{k_t}^t, x_1^t, \dots,x_{k_t}^t, f_1,\dots,f_p,y_1,\dots,y_p\},\ 1\leq t\leq s,$
such that  the table of multiplications of $R$ has the following form:
\begin{equation}R_1(b_1,b_2,\dots.b_p):\left\{\begin{array}{ll}
[e_i^t,e_j^t]=\sum\limits_{p=k_t+1}^{n_1}\gamma_{i,j}^{t,p}e_t,& 1\leq i, j\leq n_t,\ 1\leq t\leq s,\\[1mm]
[e_i^t,x_i^t]=e_i^t,& 1\leq i\leq k_t,\ 1\leq t\leq s,\\[1mm]
[x_i^t,e_i^t]=-e_i^t,&1\leq i\leq k_t^1,\ 1\leq t\leq s,\\[1mm]
[e_i^t,x_j^t]=\alpha_{i,j}^te_i^t,& k_t\leq i\leq n_t,\  1\leq j\leq k_t, \ 1\leq t\leq s,\\[1mm]
[x_j^t,e_i^t]=\sum\limits_{l=1}^{q}\beta_{t,j,i}^{i_l}e_{i_l}^t, &  k_t+1\leq i\leq n_t, \ 1\leq j\leq k_t,\ 1\leq t\leq s,\\[1mm]
[f_i,y_i]=f_i,& 1\leq i\leq p,\\[1mm]
[y_i,f_i]=(b_i-1)f_i,& b_i\in \{0,1\},\ 1\leq i\leq p;\\[1mm]
\end{array}\right.\end{equation}

\item  $R$ has the form $R=\bigoplus\limits_{t=1}^{s}R_t\oplus R(\mathbb{C}^p),$
with $R_t=N_t\oplus Q_t,$ where $N_t$ is the nilradical of $R_t$ and $Q_t$ is the subspace complementary to $N_t$.
\end{itemize}
\end{thm}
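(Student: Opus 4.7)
The plan is to mimic the proof strategy of Theorem \ref{thm33} but with the additional split summand $\mathbb{C}^p$ handled separately, since the non-split/non-abelian argument that fixes the parameters $b_i^t$ on each $N_t$ cannot be applied to the abelian part.

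First, I would fix a basis of $N$ adapted to the decomposition: for each $t$, a basis $\{e_1^t,\dots,e_{k_t}^t,e_{k_t+1}^t,\dots,e_{n_t}^t\}$ of $N_t$ with $\{e_1^t,\dots,e_{k_t}^t\}$ a set of generators (and the subset not meeting $Ann_r(N_t)$ coming first, indexed up to $k_t^1$), and a basis $\{f_1,\dots,f_p\}$ of $\mathbb{C}^p$. Since $\dim Q = \dim(N/N^2) = k+p$ with $k=\sum k_t$, Proposition \ref{thm31} produces a basis $\{x_1^t,\dots,x_{k_t}^t\}_{t=1}^s\cup\{y_1,\dots,y_p\}$ of $Q$ satisfying the relations \eqref{eq1} on the generators, in particular $[x_i^{t_1},x_j^{t_2}]=[x_i^t,y_j]=[y_i,y_j]=0$, and giving the diagonal products $[e_i^t,x_i^t]=e_i^t$, $[x_i^t,e_i^t]=(b_i^t-1)e_i^t$, $[f_i,y_i]=f_i$, $[y_i,f_i]=(b_i-1)f_i$.

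Second, I would extend this to the non-generators $e_i^t$ for $k_t+1\le i\le n_t$ via Theorem \ref{thm35}. The key point is that since $[N_{t_1},N_{t_2}]=0$ for $t_1\neq t_2$ and $[N_t,\mathbb{C}^p]=0$, every non-generator of $N_t$ is a right-normed word in $\{e_1^t,\dots,e_{k_t}^t\}$ alone. Consequently the scalars $\alpha_{i,j}^t$ depend only on $N_t$, and $[e_i^t,x_j^{t'}]=0$ for $t\neq t'$, $[e_i^t,y_j]=0$, and (by the same Leibniz-identity computation as in the proof of Theorem \ref{thm35}) also $[x_j^{t'},e_i^t]=0$ and $[y_j,e_i^t]=0$. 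Likewise $[f_i,x_j^t]=[x_j^t,f_i]=0$, because $f_i$ is its own generator and lies in a separate summand. This already yields the claimed decomposition $R=\bigoplus_{t=1}^s R_t\oplus R(\mathbb{C}^p)$ of $R$ as a direct sum of two-sided ideals, where $R_t=N_t\oplus Q_t$ with $Q_t=\mathrm{Span}\{x_1^t,\dots,x_{k_t}^t\}$ and $R(\mathbb{C}^p)=\mathbb{C}^p\oplus\mathrm{Span}\{y_1,\dots,y_p\}$.

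Third, on each $R_t$ the hypothesis that $N_t$ is non-split and non-abelian allows me to invoke Proposition \ref{cor311}: exactly as in the proof of Theorem \ref{thm33}, the relation $b_i^te_i^t=[e_i^t,x_i^t]+[x_i^t,e_i^t]\in Ann_r(R)$ forces $b_i^t=0$ for $1\le i\le k_t^1$, and the identity ${\mathcal L}(x_i^t,e_i^t,e_j^t)=0$ combined with non-splitness of $N_t$ forces $b_i^t=1$ for $k_t^1+1\le i\le k_t$; the structure constants $\beta_{t,j,i}^{i_l}$ on non-generators are then determined by induction on the length of the right-normed word using the Leibniz identity, so $R_t$ coincides up to isomorphism with the algebra of Proposition \ref{cor311}.

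The main obstacle is the abelian summand: for $R(\mathbb{C}^p)$ the non-split/non-abelian argument is not available, so nothing forces $b_i\in\{0,1\}$ to a specific value — this is why the theorem lists the algebra as a family $R_1(b_1,\dots,b_p)$ rather than claiming uniqueness as in Theorem \ref{thm33}. What still has to be checked carefully there is only that the relations from Proposition \ref{thm31} (namely $[f_i,y_j]=[y_j,f_i]=0$ for $i\neq j$ and $[y_i,y_j]=0$) continue to hold after the base changes performed on the $x_i^t$ pieces, which is immediate since those changes live entirely inside the non-abelian ideals and do not involve $f_i$ or $y_j$.
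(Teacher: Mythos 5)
Your proposal is correct and follows essentially the same route as the paper, which disposes of this theorem in one line by combining Theorem \ref{thm33} (whose proof you re-derive via Proposition \ref{thm31}, Theorem \ref{thm35} and Proposition \ref{cor311}) with the result of \cite{Abelian} on abelian nilradicals (which you obtain instead from Proposition \ref{thm31}, whose abelian base case rests on that same citation). Your extra care about the base changes not disturbing the $f_i$, $y_i$ relations and about why the $b_i$ remain free parameters on the abelian summand is sound but just makes explicit what the paper leaves implicit.
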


\begin{proof} The proof is immediate from Theorem \ref{thm33} along with a result of \cite{Abelian} (see Theorem 3.2).
\end{proof}

\begin{cor}\label{cor3.7} Any algebra under the condition of Theorem \ref{thm3.13} is isomorphic to one of the following pairwise non isomorphic $(p+1)$ algebras
$$R_1(0,0,\dots,0),\ R_1(1,0,\dots,0), \ R_1(1,1,0,\dots,0),\cdots, R_1(1,1,\dots,1,0),\ R_{1}(1,1\dots,1).$$
\end{cor}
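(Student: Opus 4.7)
The plan is to reduce the isomorphism problem to the abelian summand $R(\mathbb{C}^p)$ and then distinguish the $p+1$ candidate classes by the dimension of the right annihilator.

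First, I would invoke Theorem \ref{thm3.13}, which gives the decomposition $R=\bigoplus_{t=1}^{s}R_t\oplus R(\mathbb{C}^p)$ and, via Theorem \ref{thm33}, determines the summand $\bigoplus_{t=1}^{s}R_t$ uniquely (up to isomorphism) from the non-split non-abelian ideals $N_t$ of the nilradical. Consequently the entire isomorphism class of $R$ is governed by the remaining factor $R(\mathbb{C}^p)$, whose sole parameters are $(b_1,\dots,b_p)\in\{0,1\}^p$.

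Next, the table of Theorem \ref{thm3.13} (together with the convention that omitted products vanish) shows that the two-dimensional subspaces $S_i=\mathrm{span}\{f_i,y_i\}$ are pairwise commuting ideals of $R(\mathbb{C}^p)$, so $R(\mathbb{C}^p)=\bigoplus_{i=1}^{p}S_i(b_i)$. For any permutation $\sigma$ of $\{1,\dots,p\}$ the relabeling $f_i\mapsto f_{\sigma(i)}$, $y_i\mapsto y_{\sigma(i)}$ is an isomorphism, so the isomorphism type of $R$ depends only on the multiset $\{b_1,\dots,b_p\}$, equivalently on $r=\#\{i:b_i=1\}$. After reordering we obtain the $p+1$ candidate representatives $R_1(\underbrace{1,\dots,1}_{r},\underbrace{0,\dots,0}_{p-r})$, $0\le r\le p$.

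To prove these classes are pairwise non-isomorphic I would use $\dim\mathrm{Ann}_r(R)$ as an invariant. Within $S_i$, the relation $[y_i,f_i]=(b_i-1)f_i$ forces $f_i\in\mathrm{Ann}_r(R)$ iff $b_i=1$, while $y_i\notin\mathrm{Ann}_r(R)$ always, since $[f_i,y_i]=f_i\neq 0$. Because the ideal decomposition $R=\bigoplus R_t\oplus R(\mathbb{C}^p)$ forces $[R_t,R(\mathbb{C}^p)]=0$, there are no cross-contributions to $\mathrm{Ann}_r$, and hence
\[
\dim\mathrm{Ann}_r(R)=\dim\Bigl(\mathrm{Ann}_r(R)\cap\bigoplus_{t=1}^{s}R_t\Bigr)+r,
\]
where the first summand is an invariant of $\bigoplus N_t$ alone. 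Therefore different values of $r$ produce different $\dim\mathrm{Ann}_r(R)$, completing the classification.

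The only genuinely delicate step is this last one, and the main thing to verify is that the direct sum of ideals is rigid enough that no "hidden" annihilator elements appear from mixing the two summands; this is exactly what $[R_t,R(\mathbb{C}^p)]=0$ rules out, so the dimensions add as claimed. All the remaining steps are bookkeeping from the explicit table in Theorem \ref{thm3.13}.
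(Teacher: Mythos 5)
Your proof is correct. The paper itself gives no argument for this corollary: it is stated as an immediate consequence of Theorem \ref{thm3.13}, with the reduction to the multiset $\{b_1,\dots,b_p\}$ implicit in the permutation symmetry of the pairs $(f_i,y_i)$ (the same observation the paper makes in the four-dimensional example, where the cases $b_1=1,b_2=0$ and $b_1=0,b_2=1$ are identified ``due to the symmetricity of $x$ and $y$''), and with the pairwise non-isomorphy effectively imported from the classification of solvable Leibniz algebras with abelian nilradical in the cited reference \cite{Abelian}. What you do differently is make the non-isomorphy self-contained via the invariant $\dim\mathrm{Ann}_r(R)$: since $[f_i,y_i]=f_i$ and $[y_i,f_i]=(b_i-1)f_i$, one checks exactly as you say that $\mathrm{Ann}_r$ of the abelian-extension factor is $\mathrm{span}\{f_i : b_i=1\}$, and since the summand $\bigoplus_t R_t$ is literally the same algebra in all $p+1$ representatives (its structure constants being unique by Proposition \ref{cor311}), the annihilator dimensions are $c+r$ with $c$ a common constant, so distinct $r$ give non-isomorphic algebras. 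This is a clean replacement for the citation and, importantly, it is a global invariant, so it does not require an isomorphism to respect the direct-sum decomposition. That matters because your first paragraph slightly overstates what Theorem \ref{thm33} gives: uniqueness of $\bigoplus_t R_t$ up to isomorphism does not by itself show that an isomorphism $R\cong R'$ matches up the $R(\mathbb{C}^p)$ factors (Krull--Schmidt-type uniqueness of the decomposition would need a separate argument). Fortunately that claim is not load-bearing: parts (a) every algebra is isomorphic to a sorted representative, via Theorem \ref{thm3.13} plus relabeling, and (b) the representatives are pairwise non-isomorphic, via the annihilator count, together prove the corollary without it.
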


\begin{rem} If the nilradical is a Lie algebra the table of multiplications of $R$ in Theorem \ref{thm3.13} has a simple form as follows
\begin{equation}R_2(b_1,b_2,\dots.b_p):\left\{\begin{array}{ll}
[e_i^t,e_j^t]=\sum\limits_{p=k_t+1}^{n_1}\gamma_{i,j}^{t,p}e_t,& 1\leq i, j\leq n_t,\ 1\leq t\leq s,\\[1mm]
[e_i^t,x_i^t]=-[x_i^t,e_i^t]=e_i^t,& 1\leq i\leq k_t,\ 1\leq t\leq s,\\[1mm]
[e_i^t,x_j^t]=-[x_j^t,e_i^t]=\alpha_{i,j}^te_i^t,& k_t\leq i\leq n_t,\  1\leq j\leq k_t, \ 1\leq t\leq s,\\[1mm]
[f_i,y_i]=f_i,& 1\leq i\leq p,\\[1mm]
[y_i,f_i]=(b_i-1)f_i,& b_i\in \{0,1\},\ 1\leq i\leq p.\\[1mm]
\end{array}\right.\end{equation}
\end{rem}

%

\

\section{Some examples}

In this section we review some results obtained earlier that fit in the case studied in the papar as particular cases.

\begin{exam} Let as the nilradical be taken the nilpotent Leibniz algebra
$$L^1:\quad [e_1,e_1]=e_2, \ [e_i,e_1]=e_{i+1}, \ [e_1,e_{i}]=-e_{i+1}, \ 3\leq i \leq n-1.$$

The generator basis elements of $L_1$ are $e_1, e_3$.
Here is a result of the paper \cite{Shab}.

\begin{thm} \label{thm5.22} There is one solvable indecomposable Leibniz algebra up to isomorphism
with a codimension two nilradical $L^1,(n\geq 4)$, which is given below:
$$q_{n+2,1}: \left\{\begin{array}{lllll}
[e_1,e_1]=e_2, &[e_i,e_1]=e_{i+1}, & [e_1,e_{i}]=-e_{i+1}, & 3\leq i \leq n-1, \\[1mm]
[e_1,e_{n+1}]=e_1,&[e_i,e_{n+1}]=(i-3)e_i,&[e_{n+1},e_1]=-e_1,&[e_{n+1},e_2]=-2e_2,\\[1mm]
[e_{n+1},e_i]=(3-i)e_i,&[e_1,e_{n+2}]=e_1,&[e_i,e_{n+2}]=(i-2)e_i,&[e_{n+2},e_1]=-e_1,\\[1mm]
[e_{n+2},e_2]=-2e_2,&[e_{n+2},e_i]=(2-i)e_i,& \ (3 \leq i \leq n).&\\[1mm]
\end{array}\right.$$
\end{thm}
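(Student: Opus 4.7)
The plan is to apply Proposition \ref{cor311} to the nilradical $N=L^1$. I would first identify the generators of $L^1$: since $e_2=[e_1,e_1]\in (L^1)^2$ and $e_{i+1}=[e_i,e_1]\in (L^1)^2$ for every $3\le i\le n-1$, the only generators of $L^1$ are $e_1$ and $e_3$, so $\dim(L^1/(L^1)^2)=2$. This matches the codimension-two hypothesis of the theorem. Next, I would check that $L^1$ is non-split: the coupling $[e_1,e_3]=-e_4$ obstructs any decomposition of $L^1$ as a direct sum of ideals separating the generators, so the non-split hypothesis of Proposition \ref{cor311} is met.

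With these hypotheses verified, Proposition \ref{cor311} yields both the existence and the uniqueness (up to isomorphism) of the solvable extension $R=L^1\oplus Q$ with $\dim Q=2$, together with the general multiplication table (\ref{eq22}). The parameters $b_1,b_2$ are pinned down by the requirement $b_ie_i=[e_i,x_i]+[x_i,e_i]\in Ann_r(R)$; since $[e_3,e_1]=e_4\ne 0$ and $[e_1,e_3]=-e_4\ne 0$ show that neither generator lies in $Ann_r(L^1)$, both $b_i$ must vanish, giving $[x_i,e_i]=-e_i$. The weights $\alpha_{i,j}$ are then obtained by counting how many times each generator occurs inside the right-normed bracket word representing $e_i$: one finds $\alpha_{e_2,e_1}=2$, $\alpha_{e_2,e_3}=0$, and for $4\le i\le n$, $\alpha_{e_i,e_1}=i-3$ and $\alpha_{e_i,e_3}=1$. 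Because the pairs $(\alpha_{i,1},\alpha_{i,3})$ are pairwise distinct across the non-generator basis elements, the second bullet of Remark \ref{prop38} applies, so each left product reduces to a scalar $[x_j,e_i]=\beta_{j,i}e_i$, and the $\beta_{j,i}$ are determined recursively by iterated use of the Leibniz identity together with $[x_1,e_1]=-e_1$ and $[x_2,e_3]=-e_3$.

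The remaining step is an explicit basis change on $Q$. Setting $e_{n+1}:=x_1$ and $e_{n+2}:=x_1+x_2$, where $x_1,x_2$ are the canonical generators of $Q$ associated to $e_1,e_3$ via Proposition \ref{cor311}, one obtains $[e_i,e_{n+1}]=(i-3)e_i$ and $[e_i,e_{n+2}]=(\alpha_{i,1}+\alpha_{i,3})e_i=(i-2)e_i$ for $3\le i\le n$, together with the corresponding left products, which matches the algebra $q_{n+2,1}$ of the statement. The principal delicacy is convention-related: the current paper works with the right Leibniz identity, whereas the product $[e_{n+1},e_2]=-2e_2$ in Theorem \ref{thm5.22} is natural in the left Leibniz convention used in \cite{Shab}. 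The cleanest way to handle this is to first pass to the opposite algebra $L^{1}_{\rm opp}$, apply Proposition \ref{cor311} there, and then translate the resulting table back via $[x,y]_{\rm opp}=[y,x]$; this reconciliation of conventions is where one must be most careful, though it remains a formal bookkeeping step rather than a substantive computation.
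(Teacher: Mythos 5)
Your proposal is correct and follows essentially the same route as the paper's own derivation: apply the general structure results (Theorem \ref{thm35}, here in the sharpened form of Proposition \ref{cor311} for the non-split, two-generated nilradical $L^1$) to get $b_1=b_2=0$, the weights $\alpha_{2,1}=2$, $\alpha_{i,1}=i-3$, $\alpha_{i,2}=1$, the scalar left products $[x_j,e_i]=\beta_{j,i}e_i$, and then conclude with the same base change $e_{n+1}=x_1$, $e_{n+2}=x_1+x_2$. Two minor remarks: your recursive determination of the $\beta_{j,i}$ via the Leibniz identity subsumes the paper's separate check on the triplets $\{x,e_{n-1},e_1\}$, $\{y,e_{n-1},e_1\}$ (needed there because $e_n\in Ann_r(L^1)$, so the annihilator argument alone leaves $\beta_{1,n},\beta_{2,n}$ undetermined), and your explicit flagging of the left/right convention behind the entries $[e_{n+1},e_2]=[e_{n+2},e_2]=-2e_2$ of $q_{n+2,1}$ addresses a discrepancy the paper passes over in silence.
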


Applying Theorem \ref{thm35} we get the following $(n+2)$-dimensional solvable Leibniz algebra:
$$R(L_1):\quad \left\{\begin{array}{llll}
[e_1,e_1]=e_2, &[e_i,e_1]=-[e_1,e_{i}]=e_{i+1}, &  3\leq i \leq n-1, \\[1mm]
[e_1,x_1]=e_1,& [x_1,e_1]=(b_1-1)e_1, & b_1 \in \{0,1\},\\[1mm]
[e_3,x_2]=e_3,& [x_2,e_3]=(b_2-1)e_3, & b_2 \in \{0,1\},\\[1mm]
[e_2,x_1]=\alpha_{2,1}e_2, & [e_i,x_1]=\alpha_{i,1}e_i, & 4 \leq i \leq n,&\\[1mm]
[e_2,x_2]=\alpha_{2,2}e_2,& [e_i,x_2]=\alpha_{i,2}e_i, & 4 \leq i \leq n,&\\[1mm]
[x_1,e_2]=\beta_{1,2}e_2,&[x_1,e_i]=\beta_{1,i}e_i,& 4 \leq i \leq n,& \\[1mm]
[x_2,e_2]=\beta_{2,2}e_2,&[x_2,e_i]=\beta_{2,i}e_i,& 4 \leq i \leq n.\\[1mm]
\end{array}\right.$$

The table of multiplications of $L_1$ gives
$$\alpha_{i,1}=i-3,\  4 \leq i \leq n,\quad \alpha_{2,1}=2,\quad \alpha_{2,2}=0,\quad \alpha_{i,2}=1,\  4 \leq i \leq n$$
and $$e_1, e_3, \dots, e_{n-1} \not \in Ann_r(L_1), \quad e_2\in Ann_r(L_1).$$

Since $[e_i,z]+[z,e_i] \in Ann_r(R(L_1))$ for any $z\in R(L_1)$, we get
$$\beta_{1,2}=\beta_{2,2}=0,\quad \beta_{1,i}=3-i, \quad \beta_{2,i}=-1, \ 4 \leq i \leq n-1.$$

Verifying the Leibniz identity in $R(L_1)$ for the triplets $\{x,e_{n-1},e_1\}$ and $\{y,e_{n-1},e_1\}$ we obtain $\beta_{1,n}=3-n,\ \beta_{2,n}=-1$.

Therefore the table of multiplications of $R(L_1)$ looks like
$$R(L_1):\quad \left\{\begin{array}{llll}
[e_i,e_1]=-[e_1,e_{i}]=e_{i+1}, \ 3\leq i \leq n-1,&[e_1,e_1]=e_2, \\[1mm]
[e_1,x_1]=-[x_1,e_1]=e_1, & [e_2,x_1]=2e_2,& \\[1mm]
[e_i,x_1]=-[x_1,e_i]=(i-3)e_i,\ 4 \leq i \leq n,&[e_3,x_2]=-[x_2,e_3]=e_3,\\[1mm]
[e_i,x_2]=-[x_2,e_i]=e_i,& 4 \leq i \leq n.&\\[1mm]
\end{array}\right.$$

The base change $e_{n+1}'=x_1, \ e_{n+2}'=x_1+x_2$ in $R(L_1)$ leads to the algebra $q_{n+2,1}$.
\end{exam}

\begin{exam} Let us take as the nilradical the following split $2$-generated nilpotent Leibniz algebra
$$F_n^2:\quad [e_1,e_1]=e_3, \quad [e_i,e_1]=e_{i+1}, \ 3\leq i \leq n-1.$$

In the paper \cite{Filiform} the following classification theorem was proved.
\begin{thm} An arbitrary $(n+2)$-dimensional solvable
Leibniz algebra with the nilradical $F_n^2$ is isomorphic to one of the
following non isomorphic algebras:

$R_1:\left\{\begin{array}{llll}
[e_1,e_1]=e_3,&[e_i,e_1]=e_{i+1},& 3\leq i \leq n-1,\\[1mm]
[e_1,x]=e_1, &[x,e_1]=-e_1,\\[1mm]
[e_2,y]=-[y,e_2]=e_2, &[e_i,x]=(i-1)e_i,&3 \leq i \leq n.\\[1mm]
\end{array}\right.$
\end{thm}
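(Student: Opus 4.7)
The plan is to reduce this to the machinery already developed in Theorem \ref{thm3.13}. First I would observe that the nilradical $F_n^2$ is split as a direct sum of ideals: the basis vector $e_2$ does not appear in any bracket, so $\mathbb{C}e_2$ is a one-dimensional abelian ideal, and $N_1:=\mathrm{span}\{e_1,e_3,e_4,\ldots,e_n\}$ is a complementary non-split non-abelian ideal generated by the single element $e_1$. Thus $F_n^2=N_1\oplus \mathbb{C}e_2$ with $\dim(F_n^2/(F_n^2)^2)=2=\dim Q$, so Theorem \ref{thm3.13} applies with $s=1$, $k_1=1$, $p=1$.

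Next I would translate Theorem \ref{thm3.13} to the present setting, writing the basis of $Q$ as $\{x,y\}$ where $x$ plays the role of $x_1^1$ (acting on $N_1$) and $y$ plays the role of $y_1$ (acting on $\mathbb{C}e_2$). Since $e_1\notin \mathrm{Ann}_r(N_1)$ (because $[e_1,e_1]=e_3\neq 0$), Proposition \ref{cor311} forces the parameter $b_1^1$ to equal $0$, yielding $[e_1,x]=e_1$ and $[x,e_1]=-e_1$. The listed algebra $R_1$ further corresponds to $b_1=0$ in the abelian sector, giving $[e_2,y]=e_2$ and $[y,e_2]=-e_2$; the alternative value $b_1=1$ from Corollary \ref{cor3.7} would produce the other algebra in the classification.

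The remaining task is to pin down the scalars appearing in $[e_i,x]$ and $[x,e_i]$ for $3\le i\le n$. A straightforward induction shows that each $e_i$ is the right-normed word with exactly $i-1$ copies of $e_1$ (from $e_3=[e_1,e_1]$ and $e_{i+1}=[e_i,e_1]$), so $\alpha_{i,1}=i-1$ and hence $[e_i,x]=(i-1)e_i$. For $[x,e_i]$, the non-generator basis elements have pairwise distinct word-structures (distinct lengths in the alphabet $\{e_1\}$), so Theorem \ref{thm35} collapses the sum to $[x,e_i]=\beta_{1,i}e_i$. The Leibniz identity $\mathcal{L}(x,e_1,e_1)=0$ immediately forces $[x,e_3]=0$, and then iterating $\mathcal{L}(x,e_i,e_1)=0$ together with the vanishing $[e_1,e_i]=0$ for $i\ge 3$ propagates $\beta_{1,i+1}=\beta_{1,i}$, so all $\beta_{1,i}$ vanish and $[x,e_i]=0$.

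The main conceptual step is the structural reduction via Theorem \ref{thm3.13}, which rests on correctly identifying the decomposition $F_n^2=N_1\oplus\mathbb{C}e_2$; after that the $\alpha$-computation is an induction on word-length and the $\beta$-computation is a short chain of Leibniz identities. The only place I anticipate needing care is cleanly separating the non-generator contributions inside $N_1$ from the abelian factor $\mathbb{C}e_2$, but this is handled once the initial splitting is in hand.
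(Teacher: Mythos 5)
Your proof is correct, and the computations all check out: $F_n^2=N_1\oplus\mathbb{C}e_2$ with $N_1=\mathrm{span}\{e_1,e_3,\dots,e_n\}$ a one-generated, non-abelian ideal (and non-split, since a decomposition of $N_1$ would force $\dim (N_1/N_1^2)\geq 2$, contradicting one-generatedness), so Theorem \ref{thm3.13} applies with $s=p=1$; $[e_1,e_1]=e_3\neq 0$ gives $e_1\notin Ann_r(N_1)$, forcing $[x,e_1]=-e_1$ via Proposition \ref{cor311}; the word count gives $\alpha_{i,1}=i-1$; and your identities evaluate as claimed, since ${\mathcal L}(x,e_1,e_1)=-[x,e_3]$ and ${\mathcal L}(x,e_i,e_1)=(\beta_{1,i}-\beta_{1,i+1})e_{i+1}+[e_1,e_i]$ with $[e_1,e_i]=0$ for $i\geq 3$, so all $\beta_{1,i}$ vanish. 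Your route differs from the paper's at two points. First, the paper cites Theorem \ref{thm33} to get a parametrized table in which both $[x,e_i]=\beta_{1,i}e_i$ and $[y,e_i]=\beta_{2,i}e_i$ remain as unknowns; strictly, Theorem \ref{thm33} assumes every summand of the nilradical is non-abelian, so your explicit splitting off of $\mathbb{C}e_2$ and appeal to Theorem \ref{thm3.13} is the more accurate fit, and it disposes of the cross terms $[y,e_i]$, $[e_i,y]$ structurally (direct sum of ideals) rather than computationally. Second, the paper eliminates the $\beta$'s not by your Leibniz recursion but by an annihilator argument: $e_3=[e_1,e_1]$ is a square, $Ann_r(R)$ is a two-sided ideal, hence $e_i\in Ann_r(R)$ for all $i\geq 3$ and every left product $[z,e_i]$ vanishes at once, uniformly in $x$ and $y$. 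The annihilator shortcut is faster and characteristically Leibniz-theoretic; your recursion re-runs in miniature the uniqueness mechanism inside the proof of Proposition \ref{cor311}, which makes the argument self-contained, and your closing appeal to Corollary \ref{cor3.7} additionally delivers the pairwise non-isomorphism of $R_1$ (the case $b=0$) and its companion algebra $R_2$ (the case $b=1$, which belongs to the full statement of the theorem), a point the paper's example leaves implicit.
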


$R_2:\left\{\begin{array}{llll}
[e_1,e_1]=e_3,& [e_i,e_1]=e_{i+1},& 3\leq i \leq n-1,\\[1mm]
[e_1,x]=e_1,& [x,e_1]=-e_1,\\[1mm]
[e_2,y]=e_2,& [e_i,x]=(i-1)e_i, & 3 \leq i \leq n.\\[1mm]
\end{array}\right.$

If apply Theorem \ref{thm33} we get the table of multiplications of $R(F_n^2)$ as follows.
$$R(F_n^2):\left\{\begin{array}{lllll}
[e_1,x]=e_1,& [x,e_1]=-e_1,\\[1mm]
[e_2,y]=e_2,& [y,e_2]=(b_2-1)e_2,\ b_2 \in \{0,1\},\\[1mm]
[e_i,x]=(i-1)e_i,& 3 \leq i \leq n, & \\[1mm]
[x,e_i]=\beta_{1,i}e_i, & 3\leq i\leq n,\\[1mm]
[y,e_i]=\beta_{2,i}e_i, & 3\leq i\leq n.\\[1mm]
\end{array}\right.$$

From the table of multiplications of $F_n^2$ we get $e_3, \dots, e_{n} \in Ann_r(F_n^2)$ and $e_1 \not \in Ann_r(F_n^2)$. Since $[e_i,z]+[z,e_i] \in Ann_r(R(F_n^2))$ for any $z\in R(F_n^2)$, we obtain $\beta_{1,i}=\beta_{2,i}=0, \ 3 \leq i \leq n.$

If $b_2=0$, then we obtain $R_1$ and if $b_2=1$, then we get $R_2.$
\end{exam}

\begin{exam} In the paper \cite{4dim} the following classification theorem was proved.

\begin{thm} Let $L$ be a $4$-dimensional solvable Leibniz algebra with $2$-dimensional abelian nilradical. Then $L$ is isomorphic to one of the following pairwise non isomorphic algebras:
$$R_4^1:\left\{\begin{array}{ll}
[e_1,x]=e_1,\\[1mm]
[e_2,y]=e_2,\\[1mm]
\end{array}\right.
R_4^2:\left\{\begin{array}{ll}
[e_1,x]=-[x,e_1]=e_1,\\[1mm]
[e_2,y]=-[y,e_2]=e_2,\\[1mm]
\end{array}\right.
R_4^3:\left\{\begin{array}{ll}
[e_1,x]=e_1,\\[1mm]
[e_2,y]=-[y,e_2]=e_2.\\[1mm]
\end{array}\right.$$
\end{thm}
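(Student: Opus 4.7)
The plan is to recognize this classification as an immediate specialization of the general framework of Section 3 to the case of a purely abelian nilradical. Since $N \cong \mathbb{C}^{2}$ is two-dimensional abelian, one has $N^{2} = 0$, whence $\dim(N/N^{2}) = 2$, and the dimension constraint $\dim L = 4$ forces $\dim Q = 2 = \dim(N/N^{2})$. Thus the hypothesis of Proposition \ref{thm31} is met with $k = 2$, and we are exactly in the situation of Theorem \ref{thm3.13} with $s = 0$ and $p = 2$.

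First I would apply Proposition \ref{thm31} to produce a basis $\{e_{1}, e_{2}, x_{1}, x_{2}\}$ of $L$ satisfying
\begin{equation*}
[e_{i}, x_{i}] = e_{i}, \qquad [x_{i}, e_{i}] = (b_{i} - 1) e_{i}, \qquad [e_{i}, x_{j}] = [x_{i}, x_{j}] = 0 \ \ (i \neq j),
\end{equation*}
with $b_{1}, b_{2} \in \{0, 1\}$. This a priori yields four algebras indexed by $(b_{1}, b_{2}) \in \{0, 1\}^{2}$. The transposition $e_{1} \leftrightarrow e_{2}$, $x_{1} \leftrightarrow x_{2}$ identifies $(1, 0)$ with $(0, 1)$, so only three potential isomorphism classes survive.

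Next I would verify pairwise non-isomorphism of the remaining candidates $(0, 0)$, $(1, 0)$, $(1, 1)$ using the dimension of the right annihilator as an invariant: a direct computation gives $\dim Ann_{r}(L) = 0, 1, 2$ respectively. Setting $x = x_{1}$ and $y = x_{2}$ and reading off the multiplication tables yields the identifications $R_{4}^{1} \leftrightarrow (1, 1)$, $R_{4}^{2} \leftrightarrow (0, 0)$ and $R_{4}^{3} \leftrightarrow (1, 0)$. The same conclusion is delivered at once by Corollary \ref{cor3.7} with $p = 2$, which guarantees exactly $p + 1 = 3$ pairwise non-isomorphic algebras. No genuine obstacle arises: the substantive work is already carried out in Proposition \ref{thm31} and Corollary \ref{cor3.7}, leaving only a notational translation and the elementary non-isomorphism check via $Ann_{r}$.
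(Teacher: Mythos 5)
Your proposal is correct and follows essentially the same route as the paper: you specialize Proposition \ref{thm31} (equivalently Theorem \ref{thm35}) to the abelian nilradical to obtain the family $[e_i,x_i]=e_i$, $[x_i,e_i]=(b_i-1)e_i$ with $b_1,b_2\in\{0,1\}$, and identify $(1,0)$ with $(0,1)$ via the symmetry of $x$ and $y$, which is exactly the paper's argument. Your explicit non-isomorphism check via $\dim Ann_r(L)=0,1,2$ for $(0,0),(1,0),(1,1)$ is correct and is a small supplement to the paper, which leaves that step to Corollary \ref{cor3.7}.
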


Applying Theorem \ref{thm35} we derive
$$R(abelian):\quad \left\{\begin{array}{lll}
[e_1,x]=e_1,& [x,e_1]=(b_1-1)e_1,& b_1\in \{0,1\},\\[1mm]
[e_2,y]=e_2,& [y,e_2]=(b_2-1)e_2,& b_2\in \{0,1\}.\\[1mm]
\end{array}\right.$$

The algebras $R_4^i, i=1,2,3$ are obtained from $R(abelian)$ by considering all possible values of $b_1$ and $b_2$ (the cases $b_1=1, b_2=0$ and $b_1=0, b_2=1$ being isomorphic to each other algebras due to the symmetricity of $x$ and $y$).
\end{exam}

\begin{exam} Here is the result of the paper \cite{5dim}.

\begin{prop} \label{prop3.31} Let $L$ be a $5$-dimensional solvable Leibniz algebra whose nilradical is given by the product $[e_2,e_1]=e_3$. Then there exists a basis $\{e_1, e_2, e_3, x_1, x_2\}$ of $L$ such that its table of multiplications has the following form:
$$[e_2,e_1]=e_3,\quad [e_1,x_1]=-[x_1,e_1]=e_1, \quad
[e_2,x_2]=e_2,\quad [e_3,x_1]=e_3,\quad [e_3,x_2]=e_3.$$
\end{prop}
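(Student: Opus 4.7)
The plan is to realize Proposition \ref{prop3.31} as a direct application of the general machinery of Section 3 (specifically Theorem \ref{thm35} and its refinement Proposition \ref{cor311}) to the concrete $3$-dimensional nilradical $N = \mathrm{span}\{e_1,e_2,e_3\}$ with sole nontrivial product $[e_2,e_1]=e_3$. First I would verify the hypothesis: $N^2 = \mathrm{span}\{e_3\}$, so $\dim(N/N^2)=2$ with $\{e_1,e_2\}$ as generators, meaning $k=2$ and the condition $\dim Q = \dim(N/N^2)$ of Theorem \ref{thm35} is satisfied.

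Next, I would check that $N$ is non-split (both generators $e_1,e_2$ participate in the unique nontrivial bracket, so $N$ cannot decompose as a direct sum of proper ideals) in order to legitimately invoke Proposition \ref{cor311}. Then I would compute the right annihilator: since $[e_2,e_1]=e_3\neq 0$ we have $e_1 \notin Ann_r(N)$, while $e_2 \in Ann_r(N)$ (the products $[e_1,e_2], [e_2,e_2], [e_3,e_2]$ all vanish) and $e_3 \in Ann_r(N)$ (via Leibniz, $[e_2,e_3] = [e_2,[e_2,e_1]] = [[e_2,e_2],e_1]-[[e_2,e_1],e_2] = -[e_3,e_2]=0$, and similarly $[e_1,e_3]=0$). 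Hence $k_1 = 1$, so Proposition \ref{cor311} yields $[x_1,e_1]=-e_1$ while $b_2=1$, i.e.\ $[x_2,e_2]=0$.

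It then remains to pin down the products involving the single non-generator $e_3$. The coefficient $\alpha_{3,j}$ counts the involvement of the generator $e_j$ in the right-normed expression $e_3=[e_2,e_1]$, giving $\alpha_{3,1}=\alpha_{3,2}=1$ and hence $[e_3,x_1]=[e_3,x_2]=e_3$. For the $\beta$-coefficients a one-line Leibniz computation suffices:
\[
[x_1,e_3] = [x_1,[e_2,e_1]] = [[x_1,e_2],e_1] - [[x_1,e_1],e_2] = 0 - [-e_1,e_2] = 0,
\]
and analogously $[x_2,e_3] = [[x_2,e_2],e_1] - [[x_2,e_1],e_2] = 0$.

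Collecting these products reproduces exactly the table claimed, and the uniqueness clause of Proposition \ref{cor311} shows no further isomorphism classes can occur. The only minor care point — not really an obstacle — is correctly identifying which generator lies in $Ann_r(N)$ so that the value of $k_1$, and consequently which $b_i$ is forced to be $1$, is read off unambiguously from the nilradical; everything else is mechanical substitution into the general form \eqref{eq22}.
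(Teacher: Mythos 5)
Your proposal is correct and takes essentially the same route as the paper: the paper likewise specializes its general structure theorems to this two-generated nilradical (getting $[e_1,x_1]=-[x_1,e_1]=e_1$, $[e_3,x_1]=[e_3,x_2]=e_3$ from the $\alpha_{i,j}$'s) and then evaluates $[x_1,e_3]$ and $[x_2,e_3]$ by exactly your Leibniz-identity computation. The only cosmetic difference is that you settle $b_2=1$ (i.e.\ $[x_2,e_2]=0$) up front by verifying non-splitness and citing Proposition \ref{cor311}, whereas the paper's example leaves $b_2$ undetermined, computes $[x_2,e_3]=(b_2-1)e_3$, and forces $b_2=1$ from $e_3=[x_1,e_3]+[e_3,x_1]\in Ann_r(R)$ --- the same argument that underlies the proof of Proposition \ref{cor311}.
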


The nilradical $N$ here, with the table of multiplication $[e_2,e_1]=e_3$, is nilpotent and 2-generated. From Theorem \ref{thm33} we receive
$$\left\{\begin{array}{lllll}
[e_1,x_1]=e_1, & [x_1,e_1]=-e_1,& [e_3,x_1]=e_3, & [x_1,e_3]=\beta_{1,3}e_3,\\[1mm]
[e_2,x_2]=e_2, & [x_2,e_2]=(b_2-1)e_2,\ b_2 \in \{0,1\},& [e_3,x_2]=e_3,& [x_2,e_3]=\beta_{2,3}e_3.\\[1mm]
\end{array}\right.$$

The following chain of equalities

$[x_1,e_3]=[x_1,[e_2,e_1]]=[[x_1,e_2],e_1]-[[x_1,e_1],e_2]=0,$

$[x_2,e_3]=[x_2,[e_2,e_1]]=[[x_2,e_2],e_1]-[[x_2,e_1],e_2]=[(b_2-1)e_2,e_1]=(b_2-1)e_3,$\\
imply $[x_1,e_3]=0$ and $[x_2,e_3]=(b_2-1)e_3.$
Then due to $[x_1,e_3]+[e_3,x_1]=e_3$ we conclude that $e_3$ lies in the right annihilator of the solvable algebra, which in its turn implies $[x_2,e_3]=0$, i.e., $b_2=1.$ Thus, we obtain that in Proposition \ref{prop3.31}.
\end{exam}

%

\begin{exam}\label{exam49} Let us consider the nilpotent Leibniz algebra
$$NF_n: \quad [e_i,e_1]=e_{i+1}, \quad 1 \leq i \leq n-1.$$

This is a unique, up to isomorphism, in each dimension one-generated nilpotent algebra with the maximal index of nilpotency.
Here is a result of the paper \cite{Omirov}.

\begin{thm} \label{thm3.33} Let $R$ be a solvable Leibniz algebra whose nilradical is $NF_n$. Then there exists a basis
$\{e_1,e_2, \dots, e_n,x\}$ of the algebra $R$ such that the table of multiplications of $R$ with respect to this basis has the following form:
$$\left\{\begin{array}{lll}
[e_i,e_1]=e_{i+1}, &1 \leq i \leq n-1,\\[1mm]
[x,e_1]=e_1,& [e_i,x]=-ie_i,& 1 \leq i \leq n.\\[1mm]
\end{array}\right.$$
\end{thm}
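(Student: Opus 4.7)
The plan is to deduce Theorem~\ref{thm3.33} as a direct specialization of Proposition~\ref{cor311}. First I would note that the nilradical $NF_n$ is one-generated (by $e_1$), so $\dim(N/N^2)=1$, and consequently the complementary subspace $Q$ satisfies $\dim Q\le 1$; since the statement produces a basis of $R$ strictly larger than that of $N$, we are in the case $\dim Q=1$ and may write $Q=\langle x\rangle$. Thus the hypotheses of Theorem~\ref{thm35} and Proposition~\ref{cor311} are met with $k=1$.

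Next I would verify the finer hypotheses of Proposition~\ref{cor311}. The algebra $NF_n$ is non-split (it is one-generated and non-abelian for $n\ge 2$), and the single generator satisfies $e_1\notin Ann_r(NF_n)$ because $[e_1,e_1]=e_2\ne 0$. Hence $k=k_1=1$, and Proposition~\ref{cor311} yields a basis with $[e_1,x]=e_1$, $[x,e_1]=-e_1$, and $[e_i,x]=\alpha_{i,1}e_i$ for $2\le i\le n$, where $\alpha_{i,1}$ counts the occurrences of $e_1$ in the right-normed expression of $e_i$. Since in $NF_n$ one has $e_i=[[\dots[e_1,e_1],e_1],\dots,e_1]$ with exactly $i$ copies of $e_1$, this gives $\alpha_{i,1}=i$. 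Because each non-generator basis element $e_i$ has a unique structure (determined by its number of $e_1$'s), Proposition~\ref{cor311} also forces $[x,e_i]=\beta_i e_i$ for a single scalar $\beta_i$.

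The main computational step is to show $\beta_i=0$ for all $i\ge 2$, which I would carry out by induction using the Leibniz identity $[x,[y,z]]=[[x,y],z]-[[x,z],y]$. For the base case, $[x,e_2]=[x,[e_1,e_1]]=[[x,e_1],e_1]-[[x,e_1],e_1]=0$. For the inductive step with $i\ge 3$, writing $e_i=[e_{i-1},e_1]$ gives $[x,e_i]=[[x,e_{i-1}],e_1]-[[x,e_1],e_{i-1}]$; the first summand vanishes by the inductive hypothesis, and the second equals $-[-e_1,e_{i-1}]=[e_1,e_{i-1}]$, which is zero in $NF_n$ since the only nontrivial product with $e_1$ on the left is $[e_1,e_1]=e_2$. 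Hence $[x,e_i]=0$ for every $i\ge 2$.

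Finally, the sign discrepancy with the published form is removed by the base change $x\mapsto -x$, which converts $[e_1,x]=e_1$, $[x,e_1]=-e_1$, $[e_i,x]=ie_i$ into $[x,e_1]=e_1$ and $[e_i,x]=-ie_i$ for $1\le i\le n$, exactly reproducing the table claimed in Theorem~\ref{thm3.33}. The only subtlety is keeping track of the sign convention and noticing that the uniform formula $[e_i,x]=-ie_i$ covers the generator $i=1$ as well after this rescaling; everything else is either a direct invocation of Proposition~\ref{cor311} or the short Leibniz-identity induction above.
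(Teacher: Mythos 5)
Your proposal is correct and takes essentially the same route as the paper's own derivation in Example \ref{exam49}: specialize the general structure result (the paper quotes Theorem \ref{thm35}, you invoke Proposition \ref{cor311}, which also packages the non-splitness argument yielding $[x,e_1]=-e_1$), read off $\alpha_{i,1}=i$ from the right-normed word for $e_i$, show $[x,e_i]=0$ for $i\geq 2$, and finish with the rescaling $x\mapsto -x$. The only difference is that where the paper dispatches $\beta_i=0$ with the one-line remark that $e_2,\dots,e_n\in Ann_r(NF_n)$, you make that step explicit via the Leibniz-identity induction $[x,e_i]=[[x,e_{i-1}],e_1]-[[x,e_1],e_{i-1}]$, a rigorization of the same idea rather than a different argument.
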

Thanks to Theorem \ref{thm35} the solvable extension of $NF_n$ has the following table of multiplications:
$$R(NF_n):\quad \left\{\begin{array}{llll}
[e_1,x]=e_1, & [x,e_1]=-e_1,& \\[1mm]
[e_i,x]=ie_i,& 2 \leq i \leq n,& \\[1mm]
[x,e_i]=\beta_{i}e_i,& 2 \leq i \leq n.\\[1mm]
\end{array}\right.$$
Since $e_2, \dots, e_n \in Ann_r(NF_n)$, we conclude that $\beta_{i}=0,\ 2\leq i\leq n.$ Then the change $x'=-x$ leads to that in Theorem \ref{thm3.33}.
\end{exam}

\begin{exam} Let us take as the nilradical the following $n$-dimensional nilpotent Leibniz algebra
$$\mu_3: \quad [e_i,e_1]=e_{i+1},\ 2\leq i\leq n-2k-1,\quad [e_2,f_i]=f_{k+i},\ 1\leq i\leq k.$$

One of the results of \cite{p_filiform} was given as follows.
\begin{thm} \label{thmpfill} An arbitrary $(n+k+2)$-dimensional solvable Leibniz algebra with nilradicla $\mu_3$ admits a basis such that its table of multiplications has the following form:
$$\quad\left\{\begin{array}{ll}
[e_i,e_1]=e_{i+1},&2\leq i\leq n-2k-1,\\[1mm]
[e_2,f_i]=f_{k+i},&1\leq i\leq k,\\[1mm]
[e_1,y_1]=e_1,&\\[1mm]
[e_j,y_1]=(j-2)e_j,&3\leq j\leq n-2k,\\[1mm]
[y_1,e_1]=-e_1,&\\[1mm]
[e_j,y_2]=e_j,&2\leq j\leq n-2k,\\[1mm]
[f_{k+i},y_2]=f_{k+i},&1\leq i\leq k,\\[1mm]
[f_i,x_i]=f_i,&1\leq i\leq k,\\[1mm]
[f_{k+i},x_i]=f_{k+i},&1\leq i,j\leq k,\\[1mm]
[x_i,f_i]=-f_{i},& 1\leq i\leq k,\\[1mm]
\end{array}\right.$$
\end{thm}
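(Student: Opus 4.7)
The plan is to derive Theorem~\ref{thmpfill} as a direct application of the uniqueness result Proposition~\ref{cor311} to the specific nilradical $\mu_3$. First I would check that $\mu_3$ fits the hypotheses. Reading off the defining brackets, the generator set of $\mu_3$ is exactly $\{e_1,e_2,f_1,\dots,f_k\}$ (of cardinality $k+2$), since every other basis vector is a right-normed word built from these: $e_j=[[\cdots[e_2,e_1],e_1],\dots,e_1]$ with $j-2$ copies of $e_1$ for $3\le j\le n-2k$, and $f_{k+i}=[e_2,f_i]$ for $1\le i\le k$. Hence $\dim(\mu_3/\mu_3^{\,2})=k+2$, which matches $\dim Q=k+2$ in the $(n+k+2)$-dimensional extension $R$.

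Second, I would verify that $\mu_3$ is non-split as a Leibniz algebra. Assuming an ideal decomposition $\mu_3=A\oplus B$, the bracket $[e_2,e_1]=e_3\ne 0$ forces $e_1$ and $e_2$ to lie in the same summand (otherwise $e_3$ would belong to $A\cap B=0$), and then $[e_2,f_i]=f_{k+i}\ne 0$ forces each $f_i$ into the same summand. Since the generators then all lie in one piece, the decomposition is trivial. With non-splitness confirmed, Proposition~\ref{cor311} applies and furnishes a basis $\{e_1,\dots,e_{n-2k},f_1,\dots,f_{2k},y_1,y_2,x_1,\dots,x_k\}$ with the multiplication of the unique form \eqref{eq22}, the symbols $y_1,y_2,x_i$ being paired with the generators $e_1,e_2,f_i$ respectively.

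Next, I would pin down the right action of $Q$ on the non-generators via the integers $\alpha_{i,j}$ counting generator multiplicities in the right-normed expressions. Direct inspection of $\mu_3$ gives $\alpha_{e_j,y_1}=j-2$, $\alpha_{e_j,y_2}=1$, $\alpha_{e_j,x_i}=0$ for $3\le j\le n-2k$, and $\alpha_{f_{k+i},y_2}=\alpha_{f_{k+i},x_i}=1$ with all other $\alpha_{f_{k+i},\cdot}=0$, which reproduces the lines $[e_j,y_1]=(j-2)e_j$, $[e_j,y_2]=e_j$, $[f_{k+i},y_2]=f_{k+i}$, $[f_{k+i},x_i]=f_{k+i}$ of Theorem~\ref{thmpfill}. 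I would then compute $Ann_r(\mu_3)$: only $e_1$ and $f_1,\dots,f_k$ appear as right arguments in the defining brackets of $\mu_3$, so $e_2\in Ann_r(\mu_3)$ while $e_1,f_1,\dots,f_k\notin Ann_r(\mu_3)$. By the argument $b_ie_i=[e_i,x_i]+[x_i,e_i]\in Ann_r(R)$ used in Proposition~\ref{cor311}, this forces $[y_1,e_1]=-e_1$ and $[x_i,f_i]=-f_i$, while $[y_2,e_2]=0$.

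The only remaining step is to check that all the left actions $[q,z]$ with $q\in\{y_1,y_2,x_1,\dots,x_k\}$ and $z$ a non-generator vanish. By the uniqueness assertion of Proposition~\ref{cor311}, these are forced by the Leibniz identity $[q,[a,b]]=[[q,a],b]-[[q,b],a]$ via induction on the length of the right-normed word for $z$. For example, assuming inductively $[y_1,e_{j-1}]=0$, one obtains $[y_1,e_j]=[y_1,[e_{j-1},e_1]]=-[-e_1,e_{j-1}]=[e_1,e_{j-1}]=0$ because $[e_1,e_{j-1}]$ is absent from the defining table of $\mu_3$. The analogous inductive computations for $y_2,x_i$ on $e_j$ and on $f_{k+i}=[e_2,f_i]$ all collapse to zero. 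I expect the main obstacle to be only the bookkeeping in this final step: each generator–non-generator combination must be traversed explicitly, after which the listed products of Theorem~\ref{thmpfill} are fully accounted for and the classification is complete.
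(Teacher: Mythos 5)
Your proposal is correct and takes essentially the same route as the paper, whose entire proof of Theorem \ref{thmpfill} consists of the observation that $\mu_3$ is generated by $\{e_1,e_2,f_1,\dots,f_k\}$ with $\{e_3,\dots,e_{n-2k},f_{k+1},\dots,f_{2k}\}\subset Ann_r(\mu_3)$ and an appeal to Proposition \ref{cor311}; your computation of the multiplicities $\alpha_{i,j}$, the identification of $e_2$ as the unique generator in $Ann_r(\mu_3)$ (forcing $[y_1,e_1]=-e_1$, $[x_i,f_i]=-f_i$, $[y_2,e_2]=0$), and the inductive vanishing of the left products on non-generators simply make explicit what the paper compresses into ``we easily get.'' The one loose step is your ad hoc non-splitness check (basis vectors need not lie in the summands of a putative ideal decomposition, so the ``forces $e_1$ and $e_2$ into the same summand'' inference is not rigorous as stated), but this is harmless since non-splitness of $\mu_3$ is already part of the statement of Theorem \ref{p-filiform} and can simply be cited.
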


Since $\mu_3$ is generated by $\{e_1,e_2,f_1,\dots, f_{k}\}$ and $\{e_3,e_4,\dots,e_{n-2k},f_{k+1},\dots, f_{2k}\}\subset Ann_r(\mu_3)$, from Proposition \ref{cor311} we easily get the algebra of Theorem \ref{thmpfill}.
\end{exam}


\section{The completeness}
In this section we prove that the solvable extensions obtained in the paper are complete, i.e., they have the trivial center and all their derivations are inner.

Here is the proof that their center is trivial.
\begin{prop}\label{prop1} Let $R=N\oplus Q$ be a solvable Leibniz algebra such that $dimQ=dim (N/N^2)$. Then $Center(R)=0.$
\end{prop}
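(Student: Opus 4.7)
The plan is to work in the basis $\{e_1,\dots,e_n,x_1,\dots,x_k\}$ of $R$ supplied by Theorem \ref{thm35} and read off the centrality conditions directly from the multiplication table \eqref{table1}. Write an arbitrary central element as
\[
z=\sum_{i=1}^{n}\lambda_i e_i+\sum_{j=1}^{k}\mu_j x_j,
\]
and recall that $Q=\mathrm{Span}\{x_1,\dots,x_k\}$ is abelian, so $[x_l,x_j]=0$ for all $l,j$. The goal is to show that $z=0$ by extracting enough linear relations from the bracket identities $[z,y]=[y,z]=0$ for suitable test elements $y\in\{x_1,\dots,x_k,e_1,\dots,e_k\}$.

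First I would compute $[z,x_j]$ for each $j=1,\dots,k$. Using $[e_i,x_j]=\delta_{ij}e_i$ for $1\le i\le k$ (from Proposition \ref{thm31}) and $[e_i,x_j]=\alpha_{i,j}e_i$ for $k+1\le i\le n$ (from Theorem \ref{thm35}), together with $[x_l,x_j]=0$, this gives
\[
0=[z,x_j]=\lambda_j e_j+\sum_{i=k+1}^{n}\lambda_i\alpha_{i,j}e_i.
\]
Linear independence of $\{e_1,\dots,e_n\}$ then yields $\lambda_j=0$ for $1\le j\le k$, and $\lambda_i\alpha_{i,j}=0$ for every non-generator index $i\in\{k+1,\dots,n\}$ and every $j\in\{1,\dots,k\}$. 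Here I invoke the fourth bullet of Remark \ref{prop38}: for each such $i$ there exists at least one $j$ with $\alpha_{i,j}\neq 0$. Consequently $\lambda_i=0$ for all $k+1\le i\le n$ as well, and $z$ collapses to $z=\sum_{j=1}^{k}\mu_j x_j$.

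To kill the remaining $x$-coefficients, I would then compute $[e_i,z]$ for each $i=1,\dots,k$. Since $[e_i,x_j]=\delta_{ij}e_i$ in this range, this bracket equals $\mu_i e_i$, and the centrality condition forces $\mu_i=0$ for every $i=1,\dots,k$. Hence $z=0$ and $\mathrm{Center}(R)=0$. The argument is essentially mechanical once the table of multiplications of Theorem \ref{thm35} is in hand; the only non-formal input is Remark \ref{prop38}(iv), which guarantees that no non-generator $e_i$ can escape detection by the action of $Q$, so there is really no obstacle beyond bookkeeping the two ranges $1\le i\le k$ and $k+1\le i\le n$ separately.
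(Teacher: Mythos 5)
Your proof is correct and takes essentially the same approach as the paper's: both work in the basis of Theorem \ref{thm35} and extract linear relations by bracketing a putative central element with the $x_j$ and with the generators $e_i$, the one non-formal input in each case being that every basis element $e_i$ of $N$ admits some $j$ with $\alpha_{i,j}\neq 0$ (Remark \ref{prop38}). The only difference is cosmetic: you eliminate the $N$-component first via $[z,x_j]=0$ and then the $Q$-component via $[e_i,z]=0$, while the paper kills the $Q$-component first using $[e_{i_0},c]=0$ and then the $N$-component using $[c_1,x_{j_1}]=0$.
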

\begin{proof} Let us assume that $Center(R)\neq\{0\}$ and $c$ be a non zero element of the center. Due to Theorem \ref{thm31} $R$ admits
a basis $\{e_1, e_2, \dots, e_n, x_1, x_2, \dots, x_k\}$ such that in $R$ the products (\ref{eq1}) hold true.

Let $c=c_1+c_2$, where $c_1 \in N, \ c_2 \in Q.$

If $0 \neq c_2=\sum\limits_{i=1}^{k}\alpha_i x_i$, then there exists $i_0$, where $1\leq i_0 \leq k$ such that $\alpha_{i_{0}} \neq 0.$
From
$$0=[e_{i_0},c]=[e_{i_0},c_1]+[e_{i_0},\sum\limits_{i=1}^{k}\alpha_i x_i]=\sum\limits_{i=k+1}^{n}(*)e_i+\alpha_{i_0} e_{i_0}$$
we deduce $\alpha_{i_{0}}=0,$ which is a contradiction with the assumption that $\alpha_{i_{0}} \neq 0$. Therefore, in $c=c_1+c_2$ the term $c_2$ has to be zero, i.e., $c=c_1$
and let $c_1=\sum\limits_{i=1}^{n}\alpha_ie_i$. Without loss of generality one can assume that $\alpha_{j_0}\neq0,\ 1\leq j_0\leq n$. Let $e_{j_1},$ where $1\leq j_1 \leq k$, be in the word $e_{j_0}$. Then $$0=[c_1,x_{j_1}]=\left[\sum\limits_{i=1}^{n}\alpha_ie_i,x_{j_1}\right]=\alpha_{j_0,j_1}\alpha_{j_0}e_{j_0}
+\sum\limits_{i=1,\ i\neq j_0}^{n}\alpha_{i,j_1}\alpha_ie_i, \ \mbox{where} \ \alpha_{j_0,j_1}\geq 1\ \mbox{and}\ \alpha_{j_0}=0,$$ this contradicts again with the assumption that $c_1\neq 0$.
\end{proof}

To prove that all derivations of $R$ are inner we make use the following proposition.
\begin{prop}\label{der4} Let $R=N\oplus Q$ be a solvable Leibniz algebra such that $dimQ=dim (N/N^2)$. Then $I=Ann_r(R),$ where $I=id<[x,x] \ | \ x\in R>$.
\end{prop}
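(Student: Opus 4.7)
My plan is to prove the two inclusions separately. The containment $I\subseteq Ann_r(R)$ is a standard fact for Leibniz algebras: for any $y,r\in R$, the right Leibniz identity gives
\[ [r,[y,y]]=[[r,y],y]-[[r,y],y]=0, \]
so every square lies in $Ann_r(R)$; since $Ann_r(R)$ is a two-sided ideal, it must contain the ideal $I$ generated by the squares.

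For the reverse inclusion $Ann_r(R)\subseteq I$, I would first describe $I$ explicitly as a vector space. Over $\mathbb{C}$ the polarization identity $[a,b]+[b,a]=[a+b,a+b]-[a,a]-[b,b]$ yields
\[ I=\mathrm{Span}_{\mathbb{C}}\{[a,b]+[b,a]\,:\,a,b\in R\}, \]
and a direct check using the Leibniz identity shows this span is already a two-sided ideal: left multiplications annihilate it and right multiplications return elements of the same form. Evaluating on basis elements from Theorem~\ref{thm35}, $I$ is spanned by the generator contributions $b_ie_i=[e_i,x_i]+[x_i,e_i]$ for $1\leq i\leq k$, the mixed contributions $\alpha_{i,j}e_i+\sum_{t=1}^{q}\beta_{j,i}^{i_t}e_{i_t}=[e_i,x_j]+[x_j,e_i]$ for $k+1\leq i\leq n$ and $1\leq j\leq k$, and the pure $N^2$-contributions $[e_i,e_j]+[e_j,e_i]$ for $1\leq i,j\leq n$; the pure $Q$-$Q$ terms $[x_p,x_q]+[x_q,x_p]$ vanish by~$(\ref{eq1})$.

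Next, given $z\in Ann_r(R)$, I would repeat the argument of Proposition~\ref{prop1} to show that the conditions $[e_i,z]=0$ force the $Q$-component of $z$ to vanish, so $z=\sum_{i=1}^{n}c_ie_i\in N$. Expanding $[x_j,z]=0$ and extracting the coefficient of the generator $e_j$ (noting that for $i>k$ the bracket $[x_j,e_i]$ lies in the span of non-generator basis elements of the same structure as $e_i$) yields $c_j(b_j-1)=0$; hence either $c_j=0$, or $b_j=1$, in which case $c_je_j=c_jb_je_j\in I$. Subtracting these terms from $z$, I may assume $z=\sum_{i>k}c_ie_i\in N^2$. By the last item of Remark~\ref{prop38}, every non-generator $e_i$ satisfies $\alpha_{i,j}\neq0$ for some $j$, so the mixed contributions above let me rewrite each $c_ie_i$ modulo $I$ as a linear combination of non-generator basis elements of the same structure as $e_i$. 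The remaining equations $[x_j,z]=0$, restricted to each such ``structure class,'' then match exactly the defining relations of the mixed (and where needed, $N^2$) contributions in that class, forcing the residual coefficients to vanish.

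The main technical hurdle is this final step: I need to organize the non-generator basis elements into the ``same-structure'' equivalence classes of Theorem~\ref{thm35} and verify that within each class the mixed and $N^2$ contributions span exactly the space of annihilating coefficient vectors. The block-diagonal nature of the right $Q$-action (since $\beta_{j,i}^t=0$ unless $e_t$ has the same structure as $e_i$, by Theorem~\ref{thm35}) makes each block tractable, but the cross-class bookkeeping introduced by the $N^2$ contributions is delicate. I would handle it by induction on the nilpotency class $\tau$ of $N$, working along the filtration $R/N^m$ already exploited in the proof of Proposition~\ref{thm31}, so that at each step only finitely many basis elements and relations enter.
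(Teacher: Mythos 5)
Your first half is sound and matches what the paper leaves implicit: $I\subseteq Ann_r(R)$ via squares and polarization, $Ann_r(R)\subseteq N$ by the argument of Proposition \ref{prop1}, and the removal of generator components using $b_je_j=[e_j,x_j]+[x_j,e_j]\in I$. The genuine gap is precisely the step you yourself flag as the ``main technical hurdle'': the claim that, within each same-structure class, the relations $[x_j,z]=0$ ``match exactly the defining relations of the mixed (and where needed, $N^2$) contributions,'' forcing the residual coefficients to vanish, is asserted but never proved, and the induction on $\tau$ along the filtration $R/N^m$ is only named, not executed. Worse, the route you chose makes this step look harder than it is: the relations $[x_j,z]=0$ involve the structure constants $\beta_{j,i}^{i_t}$, which are not combinatorially determined (they are pinned down only a posteriori in Proposition \ref{cor311} and may well vanish), so the per-class rank comparison you would need has no visible reason to hold without additional input. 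As written, the proof stops one step short of its conclusion.

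The missing idea --- and the paper's actual proof --- is to trade the left relations for right multiplications. For $z\in Ann_r(R)$ one has $[z,x_j]=[z,x_j]+[x_j,z]\in I$, and by Theorem \ref{thm35} the right action of each $x_j$ is \emph{diagonal}: $[e_i,x_j]=\alpha_{i,j}e_i$, with the $\alpha_{i,j}$ determined by the word structure of $e_i$; the unknown $\beta$'s never enter. Concretely, the paper assumes $f=\sum_{i=1}^{k}a_ie_i+\sum_{i=k+1}^{n}b_ie_i\in Ann_r(R)\setminus I$, picks $i_0>k$ with $b_{i_0}\neq0$ and $j_0$ with $\alpha_{i_0,j_0}\neq 0$ (possible by the last item of Remark \ref{prop38}), and replaces $f$ by $f-\frac{1}{\alpha_{i_0,j_0}}[f,x_{j_0}]$, which kills the $e_{i_0}$-component without leaving $Ann_r(R)\setminus I$; after at most $n-k$ steps $f$ is supported on generators, and then $f=\bigl[f,\sum_{i=1}^{k}x_i\bigr]+\bigl[\sum_{i=1}^{k}x_i,f\bigr]\in I$, a contradiction. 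In fact your own class-by-class setup collapses to this computation: since the left action preserves structure classes, $[x_j,z]=0$ gives $[x_j,z_C]=0$ for each class part $z_C$, whence $\sum_{s}c_{i_s}\bigl([e_{i_s},x_j]+[x_j,e_{i_s}]\bigr)=[z_C,x_j]=\alpha_{C,j}z_C$, where $\alpha_{C,j}$ is the common value of $\alpha_{i,j}$ on the class; so $z_C=\frac{1}{\alpha_{C,j}}[z_C,x_j]\in I$ for any $j$ with $\alpha_{C,j}\neq0$. No induction on $\tau$ and no rank bookkeeping are needed.
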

\begin{proof} Due to Theorem \ref{thm35} we get $I\subseteq Ann_r(R).$  Let suppose that $Ann_r(R)\varsubsetneq I$, $f\in Ann_r(R)\setminus I$
and set
$$f=\sum\limits_{i=1}^{k}a_ie_i+\sum\limits_{i=k+1}^{n}b_ie_i.$$
Let also assume that $b_{i_0}\neq 0$ for some $i_0,$ where $k+1\leq i_0\leq n$ and let $j_0$ for $1\leq j_0\leq k$ such that $\alpha_{i_0,j_0}\neq 0$.
Consider
$$[f,x_{j_0}]=a_{j_0}e_{j_0}+\alpha_{i_0,j_0}b_{i_0}e_{i_0}+\sum\limits_{i=k+1,\ i\neq i_0}^{n}\alpha_{i,j_0}b_ie_i.$$

Since $[f,x_{j_0}]=[f,x_{j_0}]+[x_{j_0},f] \in I$ we obtain
$$f^1=f-\frac{1}{\alpha_{i_0,j_0}}[f,x_{j_0}]=\sum\limits_{i=1}^{k}a_i^1e_i+\sum\limits_{i=k+1,\ i\neq i_0}^{n}b_i^1e_i \in Ann_r(R)\setminus I.$$

Iterating $(n-k)$-times, we obtain
$f^{n-k}=\sum\limits_{i=1}^{k}a_i^{n-k}e_i\in Ann_r(R)\setminus I.$
Then from the chain of equalities
$$I\ni\left[\sum\limits_{i=1}^{k}x_i,f^{n-k}\right]+\left[f^{n-k},\sum\limits_{i=1}^{k}x_i\right]=
\sum\limits_{i=1}^{k}a_i^{n-k}e_i=f^{n-k}\in Ann_r(R)\setminus I$$
we get a contradiction with the assumption that $Ann_r(R)\varsubsetneq I$.
\end{proof}

\begin{rem} \label{rem15} Since the basis elements $e_i, \ i, \ k+1\leq i\leq n$ are in $N^2$, at least two of the generator basis elements have to be in $e_i$.
Therefore,
%
$\sum\limits_{t=1}^{k}\alpha_{i,t}\geq 2$.
Later on we will repeatedly make use this property of $\alpha_{i,t}$.
\end{rem}

\begin{thm} \label{thm53} Let $R=N\oplus Q$ be a solvable Leibniz algebra such that $dimQ=dim (N/N^2)$. Then all derivations of $R$ are inner.
\end{thm}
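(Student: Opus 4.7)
The plan is to show that any $d \in \Der(R)$ can be written as $d = R_{\xi}$ for some $\xi \in R$, by successively subtracting explicit inner derivations from $d$ until we are left with the zero map. Throughout, I work in the basis of Theorem \ref{thm35}, and I use as a fundamental tool the semisimple inner derivation $R_{X}$, where $X = x_{1}+x_{2}+\dots+x_{k}$. By Theorem \ref{thm35} together with Remark \ref{rem15}, $R_{X}$ acts diagonally on $N$ with eigenvalue $1$ on each generator $e_{i}$ ($1\leq i\leq k$) and with eigenvalue $\sum_{t=1}^{k}\alpha_{i,t}\geq 2$ on each non-generator $e_{i}$ ($k+1\leq i\leq n$), while $R_{X}|_{Q}=0$. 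Consequently the weight-$1$ subspace of $R_{X}$ inside $N$ is precisely $\mathrm{span}\{e_{1},\dots,e_{k}\}$, the weight-$0$ subspace is $Q$, and every other weight is $\geq 2$.

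\emph{Step 1 (normalizing $d$ on $Q$).} Write $d(x_{j})=\sum_{m}a_{jm}e_{m}+\sum_{m}b_{jm}x_{m}$. Applying $d$ to the relations $[x_{i},x_{j}]=0$, $[e_{i},x_{i}]=e_{i}$ and $[e_{i},x_{j}]=0$ ($i\neq j$, $i,j\leq k$), and comparing the $Q$-components via the multiplication table \eqref{table1}, I expect to deduce $b_{jm}=0$ for all $j,m$, so that $d(x_{j})\in N$. The identity $d([x_{j},x_{j}])=0$ then gives $b_{j}a_{jj}=0$, so the diagonal coefficient $a_{jj}$ is either free (when $b_{j}=0$) or forced to vanish (when $b_{j}=1$). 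Subtracting the inner derivation $R_{\xi_{1}}$ with $\xi_{1}=-\sum_{j:\,b_{j}=0}a_{jj}e_{j}$ absorbs the diagonal part, and then using $d([x_{i},x_{j}])=0$ for $i\neq j$ one shows $a_{jm}=0$ for all $m\leq k$, $m\neq j$. Hence after this normalization $d(x_{j})\in N^{2}$ for every $j$.

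\emph{Step 2 (weight decomposition).} Decompose $d=\sum_{\mu}d_{\mu}$ according to the adjoint action of $R_{X}$: each $d_{\mu}$ is itself a derivation sending $V_{\lambda}\to V_{\lambda+\mu}$, where $V_{\lambda}$ is the $\lambda$-weight space (with $V_{0}=Q$). For $\mu\neq 0$, $d_{\mu}(Q)\subseteq V_{\mu}\subseteq N$, while for $\mu=0$, $d_{0}$ preserves $Q$ and each $V_{\lambda}$. I plan to treat the non-zero weight components first: for each $\mu\neq 0$, the weight-$\mu$ component of $d(x_{j})$ lies in $V_{\mu}$, so by choosing $\xi_{\mu}\in V_{\mu}$ (using the explicit $R_{\xi}$-formula on $x_{j}$'s from the multiplication table) we can match $d_{\mu}|_{Q}=R_{\xi_{\mu}}|_{Q}$; the remaining derivation $d_{\mu}-R_{\xi_{\mu}}$ vanishes on $Q$. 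Since every non-generator $e_{i}$ ($i>k$) is a right-normed word in the generators, its image under a derivation is determined by the values on generators together with the values on $Q$ (via $[e_{i},x_{j}]=\alpha_{i,j}e_{i}$ and the Leibniz identity). Combining this with $d_{\mu}$-vanishing on $Q$, one propagates the vanishing to all of $R$.

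\emph{Step 3 (zero-weight component).} For $d_{0}$, an analogous analysis of $d_{0}(e_{i})=p_{ii}e_{i}+\sum_{l>k}p_{il}e_{l}$ (for $i\leq k$) via $d_{0}([e_{i},x_{i}])=d_{0}(e_{i})$ and $d_{0}([e_{i},e_{j}])$, coupled with Proposition \ref{der4} ($\mathrm{Ann}_{r}(R)$ equals the ideal generated by squares), shows that $d_{0}$ coincides with the inner derivation $R_{\eta}$ for a suitable $\eta=\sum c_{i}x_{i}\in Q$; here the $c_{i}$ are read off from the diagonal scalars $p_{ii}$. Summing up, $d$ equals $R_{\xi_{1}+\sum_{\mu\neq 0}\xi_{\mu}+\eta}$. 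The main obstacle I anticipate is the bookkeeping in Step 2: because of the non-antisymmetry, the left action of $Q$ on $N$ encoded by the $\beta_{j,i}^{i_{t}}$ is not diagonal, so the linear system defining $\xi_{\mu}$ mixes distinct basis elements of the same structural type. The crucial input that keeps this system solvable is the rigidity enforced by Proposition \ref{cor311}, which pins down the $\beta$-coefficients uniquely in terms of the nilradical structure; together with the strict positivity of weights on $N$, this will prevent any spurious weight-$0$ obstruction from appearing.
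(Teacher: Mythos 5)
Your overall strategy (toral weight decomposition with respect to the inner derivation $R_X$, $X=x_1+\dots+x_k$) is sound in its setup: $R_X$ is indeed diagonalizable with $R_X|_Q=0$, eigenvalue $1$ exactly on $\mathrm{span}\{e_1,\dots,e_k\}$ and eigenvalues $\geq 2$ on $N^2$ (Remark \ref{rem15}), and your Step 1 and Step 3 computations can be made rigorous. But there is a genuine gap at the heart of Step 2: the assertion that for each $\mu\neq 0$ one ``can match $d_\mu|_Q=R_{\xi_\mu}|_Q$'' by choosing $\xi_\mu\in V_\mu$. The linear map $\xi\mapsto R_\xi|_Q$, i.e.\ $\xi\mapsto [x_j,\xi]$, is governed by the \emph{left} action of $Q$ on $N$ (the coefficients $\beta_{j,i}^{i_t}$ and $b_i-1$), and in general it is far from surjective onto the space where $d_\mu|_Q$ lives; it can even vanish identically on $V_\mu$. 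A concrete instance is $R(NF_n)$ of Example \ref{exam49}: there $[x,e_i]=0$ for all $i\geq 2$, so $R_\xi|_Q=0$ for every $\xi\in N^2$, and your matching is possible only if $d_\mu|_Q$ already vanishes for $\mu\geq 2$ --- which is true, but is exactly what must be \emph{proved} (e.g.\ $d(x)=e_2$ is excluded only by applying $d$ to the relation $[x,e_1]=-e_1$, an argument your sketch never makes). Your appeal to Proposition \ref{cor311} does not fill this hole: that proposition gives \emph{uniqueness} of the structure constants $\beta_{j,i}^{i_t}$, which says nothing about the solvability of the linear system $R_{\xi_\mu}|_Q=d_\mu|_Q$, i.e.\ nothing about whether the constrained values of $d_\mu$ on $Q$ lie in the image of $\xi\mapsto R_\xi|_Q$. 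Since the vanishing of positive-weight outer derivations is precisely the substance of the theorem, Step 2 assumes what is to be proved.

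Two smaller points, and the contrast with the paper. First, your propagation claim (``vanishing on $Q$ propagates to all of $R$'') also needs an argument: from $\delta_{ij}\,d_\mu(e_i)=[d_\mu(e_i),x_j]+[e_i,d_\mu(x_j)]$ and Remark \ref{rem15} one can kill $d_\mu$ on the generators once $d_\mu|_Q=0$, but this is not automatic from $N$ being generated by $e_1,\dots,e_k$; second, in Step 3 your ansatz $d_0(e_i)=p_{ii}e_i+\sum_{l>k}p_{il}e_l$ is inconsistent with weight preservation ($e_l$, $l>k$, has weight $\geq 2$), though this slip is harmless since those terms vanish anyway. The paper avoids the whole weight analysis: it quotients by the ideal $I$ generated by squares, which by Proposition \ref{der4} equals $Ann_r(R)$, obtains a solvable \emph{Lie} algebra $G=R/I$ satisfying the same codimension condition, invokes the Lie completeness theorem (Theorem \ref{thmLieH1}, from \cite{Qobil}) to get $\Der(G)=\mathrm{Inner}(G)$, and then shows every $d\in\Der(R)$ is determined by its $G$-component, yielding $\dim\Der(R)\leq \dim G=\dim R-\dim Ann_r(R)=\dim \mathrm{Inner}(R)$ (with a separate case when $I(N)\neq I(R)$). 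If you wish to keep your direct route, you must actually establish the positive-weight vanishing that the paper imports from \cite{Qobil}; as written, your proof replaces that hard step by an unsupported surjectivity claim.
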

\begin{proof}
To prove we have to go through a long straightforward arguments as follows. Let $I(N)$ and $I(R)$ denote the ideals generated by squires in $N$ and $R,$ respectively. Here for a subspace $W$ of a vector space $V$ the notation $\pi_W$ stands for the projection to the subspace $W$. The proof is split into two cases.
\begin{itemize}
\item{\textbf{Case 1.}} Let us suppose that $I(N)=I(R).$ Then $I\subseteq N^2$ and we get $G=R/I=(N+Q)/I=\widetilde{N}+Q$. The quotient Lie algebra $G$ satisfies the condition $dim\widetilde{N}/\widetilde{N}^2=dimQ.$ Due to Theorem
\ref{thmLieH1} all its derivations are inner.

Let take $d\in Der(R)$. Then
$$d=d_{G,G}+d_{G,I}+d_{I,I},\ \mbox{where}\ d_{G,G}: G\rightarrow G,\ \ d_{G,I}: G\rightarrow I,\ \ d_{I,I} : I\rightarrow I.$$
Obviously,
$$d(e_i)=d_{G,G}(e_i)+d_{G,I}(e_i), \quad d(x_i)=d_{G,G}(x_i)+d_{G,I}(x_i),\quad 1\leq i\leq k.$$
Moreover, $d_{G,G}\in Der(G).$

It is true that
$$0=d([x_i,x_j])=[d(x_i),x_j]+[x_i,d(x_j)]=[d_{G,G}(x_i),x_j]+[d_{G,I}(x_i),x_j]$$

\qquad \quad $+[x_i,d_{G,G}(x_j)]=d_{G,G}([x_i,x_j])+[d_{G,I}(x_i),x_j]=[d_{G,I}(x_i),x_j],\
\mbox{for}\ 1\leq i,j\leq k.$\\
Now we use Remark \ref{rem15} and the embedding $I\subseteq N^2=Span\{e_{k+1}, \dots, e_n\}$ to get $d_{G,I}(x_i)=0,$ where $1\leq i\leq k$.

We also note that for $1\leq i,j\leq k$ the following holds true
$$d_{G,G}([e_i,x_j])-[d_{G,G}(e_i),x_j]-[e_i,d_{G,G}(x_j)]=d_{G,G}([e_i,x_j])
-[d_{G,G}(e_i),x_j]$$

\qquad $-\pi_G[e_i,d_{G,G}(x_j)]-\pi_I[e_i,d_{G,G}(x_j)]=-\pi_I[e_i,d_{G,G}(x_j)].$\\
Therefore, the components of $\pi_I[e_i,d_{G,G}(x_j)]$ are expressed via structure constants of the algebra $R$ and the components of $d_{G,G}$.

The equality with $1\leq i\neq j\leq k$
$$0=d([e_i,x_j])=[d(e_i),x_j]+[e_i,d(x_j)]=[d_{G,G}(e_i),x_j]+[d_{G,I}(e_i),x_j]$$
$$+[e_i,d_{G,G}(x_j)]=[d_{G,I}(e_i),x_j]+\pi_I[e_i,d_{G,G}(x_j)]$$
gives
\begin{equation}\label{co9}
[d_{G,I}(e_i),x_j]=-\pi_I[e_i,d_{G,G}(x_j)],\quad 1\leq i\neq j\leq k.
\end{equation}

And the following equality with $1\leq i\leq k$
$$d_{G,G}([e_i,x_i])+d_{G,I}(e_i)=d(e_i)=d([e_i,x_i])=[d(e_i),x_i]+[e_i,d(x_i)]
=[d_{G,G}(e_i),x_i]$$
$$+[d_{G,I}(e_i),x_i]+[e_i,d_{G,G}(x_i)]=d_{G,G}([e_i,x_i])+[d_{G,I}(e_i),x_i]+\pi_I[e_i,d_{G,G}(x_i)]$$
implies
\begin{equation}\label{co10}
d_{G,I}(e_i)-[d_{G,I}(e_i),x_i]=\pi_I[e_i,d_{G,G}(x_i)] \quad 1\leq i\leq k.
\end{equation}

If add up (\ref{co9}) for any values of $j$ and subtract (\ref{co10}) from the result we get
$$\sum\limits_{j=1}^{k}[d_{G,I}(e_i),x_j]-d_{G,I}(e_i)=-\sum\limits_{j=1}^{k}\pi_I[e_i,d_{G,G}(x_j)],\ \ 1\leq i\leq k.$$

Due to Remark \ref{rem15} we conclude that the components of $d_{G,I}(e_i)$ are expressed via structure constants of the algebra $R$ and the components of $d_{G,G}$.

Thus, applying Theorem \ref{thmLieH1}, Proposition \ref{der4} and the arguments above we obtain
$$dimDer(R)\leq dimDer(G)=dimInner(G)=dim G-dim Center(G)$$
$$=dim G=dim R-dimI=dim R - dim Ann_r(R)=dim Inner (R),$$
which completes the proof of the theorem in Case 1.

\item{\textbf{Case 2.}} Alternatively, let now suppose that $I(N)\neq I(R)$. We set $\{e_{k_1+1},\dots,e_{k}\}=I(R)\cap \{e_1,\dots,e_{k}\}$ and
$J=ideal<e_{k_1+1},\dots,e_{k}>=Span\{e_{k_1+1},\dots,e_{k}, e_{n_1+1},e_{n_1+2},\dots,e_{n}\}.$ Then $J\subseteq I(R)\cap N.$

Consider the Leibniz algebra $\widetilde{R}=R/J=(N\oplus Q)/J=\widetilde{N}\oplus \widetilde{Q}\oplus Q_1$,where $\widetilde{N}=\{e_{1},e_{2},\dots,e_{k_1},e_{k+1},\dots,
e_{n_1}\}$, $\widetilde{Q}=\{x_1,x_2,\dots,x_{k_1}\}$ and $Q_1=\{x_{k_1+1},\dots,x_k\}$.

The solvable Leibniz algebra $L=\widetilde{N}\oplus \widetilde{Q}$ satisfies the conditions $I(\widetilde{N})=I(L)$ and $dim\widetilde{Q}=dim(\widetilde{N}/\widetilde{N}^2)=k_1.$
Then applying Case 1 we get $dim Der(L)=dim Inner (L)$ and  $dimInner(R)=dim Der(L)+dimQ_1=dimDer(L)+k-k_1.$

The fact $J\subseteq I$ and $d(I)\subseteq I$ ($d\in Der (R)$) yields $d(J)\subset I$.
For $d\in Der (R)$ we get a decomposition
$$d=d_{L,L}+d_{L,J}+d_{L,Q_1}+d_{J,I}+d_{Q_1,N}+d_{Q_1,Q},$$
where
$$d_{L,L}: L \rightarrow L,\quad d_{L,J}: L \rightarrow J,\quad d_{L,Q_1}: L \rightarrow Q_1,$$
$$\quad d_{J,I} : J\rightarrow I, \quad d_{Q_1,N}: Q_1\rightarrow N, \quad d_{Q_1,Q}: Q_1\rightarrow Q.$$
Then

$d(e_i)=d_{L,L}(e_i)+d_{L,J}(e_i)+d_{L,Q_1}(e_i),\quad d(x_i)=d_{L,L}(x_i)+d_{L,J}(x_i)+d_{L,Q_1}(x_i),\ 1\leq i\leq k_1,$

$d(e_i)=\sum\limits_{t=k_1+1}^{k}\theta_{i,t}e_t+d_{J,N^2}(e_i), \quad d(x_i)=d_{Q_1,N}(x_i)+d_{Q_1,Q}(x_i), \ k_1+1\leq i\leq k.$\\
Evidently, $d_{L,L} \in Der(L).$

Taking into account Remark \ref{rem15},
the embedding
$J\subseteq Ann_r(R)$
and
$$0=d([x_i,x_j])=[d(x_i),x_j]+[x_i,d(x_j)]=[d_{Q_1,N}(x_i),x_j],$$
with $k_1+1 \leq i\leq k, \ 1\leq j\leq k,$
we derive
\begin{equation}\label{1222}
  d_{Q_1,N}(x_i)=0,\ k_1+1 \leq i\leq k.
\end{equation}

Consider the chain of equalities
$$0=d([x_i,x_j])=[d(x_i),x_j]+[x_i,d(x_j)]=[d_{L,L}(x_i),x_j]+[d_{L,J}(x_i),x_j]$$
$$+[x_i,d_{L,L}(x_j)]=d_{L,L}([x_i,x_j])+[d_{L,J}(x_i),x_j]=[d_{L,J}(x_i),x_j].$$
i.e., we get
\begin{equation}\label{eq1234}
[d_{L,J}(x_i),x_j]=0.
\end{equation}
particularly,
\begin{equation}\label{eq12345}
  [d_{L,J}(x_i),x_j]=0,\ \mbox{for}\ 1\leq i,j\leq k_1.
\end{equation}
Now we make use (\ref{1222}) in (\ref{eq1234}) with $1 \leq i\leq k_1, \ k_1+1\leq j\leq k$ to obtain
$[d_{L,J}(x_i),x_j]=0,\ 1\leq i\leq k_1,\ k_1+1\leq j\leq k,$ and that along with (\ref{eq12345}) gives $[d_{L,J}(x_i),x_j]=0,\ 1\leq i\leq k_1,\ 1\leq j\leq k.$
Hence,
$$d_{L,J}(x_i)=0,\ 1\leq i\leq k_1 \ \ \mbox{and} \ \ d(x_i)=d_{L,L}(x_i)+d_{L,Q_1}(x_i),\ 1\leq i\leq k_1.$$
Observe that for $1\leq i,j\leq k_1$ we have
\begin{equation}\label{der5}\begin{array}{c}
d_{L,L}([e_i,x_j])-[d_{L,L}(e_i),x_j]-[e_i,d_{L,L}(x_j)]=d_{L,L}([e_i,x_j])-[d_{L,L}(e_i),x_j]-\\[1mm]
\pi_L[e_i,d_{L,L}(x_j)]-\pi_J[e_i,d_{L,L}(x_j)]=-\pi_J[e_i,d_{L,L}(x_j)],\\[1mm]
\end{array}\end{equation}
Considering
$$0=d([e_i,x_j])=[d(e_i),x_j]+[e_i,d(x_j)]=[d_{L,L}(e_i),x_j]+[d_{L,J}(e_i),x_j]
+[e_i,d_{L,L}(x_j)]$$
 \quad $=d_{L,L}([e_i,x_j])+\pi_{J}([e_i,d_{L,L}(x_j)])+[d_{L,J}(e_i),x_j]=
\pi_{J}([e_i,d_{L,L}(x_j)])+[d_{L,J}(e_i),x_j]$\\ for $1\leq i\neq j\leq k_1$ we obtain
\begin{equation}\label{co4}
[d_{L,J}(e_i),x_j]=-\pi_{J}([e_i,d_{L,L}(x_j)]),\ 1\leq i\neq
j\leq k_1.
\end{equation}
Note that for $1 \leq i\leq k_1$ the following takes place
$$d(e_i)=d([e_i,x_i])=[d(e_i),x_i]+[e_i,d(x_i)]=[d_{L,L}(e_i),x_i]+[d_{L,J}(e_i),x_i]$$
$$+[e_i,d_{L,L}(x_i)]=d_{L,L}(e_i)+\pi_{J}([e_i,d_{L,L}(x_i)])+[d_{L,J}(e_i),x_i].$$
On the other hand

$\qquad d(e_i)=d([e_i,x_i])=d_{L,L}(e_i)+d_{L,J}(e_i)+d_{L,Q_1}(e_i).$\\
Hence,
\begin{equation}\label{co5}
d_{L,Q_1}(e_i)=0,\quad
d_{L,J}(e_i)-[d_{L,J}(e_i),x_i]=\pi_{J}([e_i,d_{L,L}(x_i)]),\
1\leq i\leq k_1.
\end{equation}
Due to (\ref{co5}) and (\ref{1222}) for $1\leq i\leq k_1,\ k_1+1\leq j\leq k$ we have
$$0=d([e_i,x_j])=[d(e_i),x_j]+[e_i,d(x_j)]=[d_{L,J}(e_i),x_j]+[e_i,d_{Q_1,Q}(x_j)].$$
which implies
\begin{equation}\label{co11}
d_{Q_1,\widetilde{Q}}(x_j)=0,\quad [d_{L,J}(e_i),x_j]=0,\ 1\leq i\leq k_1,\
k_1+1\leq j\leq k.
\end{equation}
Like in Case 1 from (\ref{co4}), (\ref{co5}) and (\ref{co11}) we derive
$$d_{L,J}(e_i)-\sum\limits_{j=1}^{k}[d_{L,J}(e_i),x_j]=\sum\limits_{j=1}^{k_1}\pi_J([e_i,d_{L,L}(x_j)]),\ 1 \leq i\leq k_1$$
and which gives that the components $d_{L,J}(e_i)$ are expressed via structure constants of the algebra
$R$ and the components of $d_{L,L}$.

The chain of the equalities with $k_1+1\leq i\leq k$:

$\sum\limits_{t=k_1+1}^{k}\theta_{i,t}e_t+d_{J,N^2}(e_i)=d(e_i)=d([e_i,x_i])
=[d(e_i),x_i]+[e_i,d(x_i)]$

$\left[\sum\limits_{t=k_1+1}^{k}\theta_{i,t}e_t+d_{J,N^2}(e_i),x_i\right]+[e_i,d_{Q_1,Q_1}(x_i)]=
\theta_{i,i}e_i+[d_{J,N^2}(e_i),x_i]+[e_i,d_{Q_1,Q_1}(x_i)],$\\
give
\begin{equation}\label{co6}\begin{array}{l}
[e_i,d_{Q_1,Q_1}(x_i)]=0,\quad \theta_{i,t}=0, \\ d_{J,N^2}(e_i)-[d_{J,N^2}(e_i),x_i]=0, \ \ k_1+1\leq i\neq t\leq k.\\[1mm]
\end{array}\end{equation}

For $k_1+1\leq i\neq j\leq k$ we have

$0=d([e_i,x_j])=[d(e_i),x_j]+[e_i,d(x_j)]=[\theta_{i,i}e_i+d_{J,N^2}(e_i),x_j]$

\qquad$[e_i,d_{Q_1,Q_1}(x_j)] =[d_{J,N^2}(e_i),x_j]+[e_i,d_{Q_1,Q_1}(x_j)].$

Therefore,
\begin{equation}\label{co7}
[d_{J,N^2}(e_i),x_j]=0, \quad \ k_1+1\leq i\neq j\leq k
\end{equation}
and
\begin{equation}\label{co77}
[e_i,d_{Q_1,Q_1}(x_j)]=0, \quad \ k_1+1\leq i\neq j\leq k.
\end{equation}

From (\ref{co6}), (\ref{co77}) we obtain $d_{Q_1,Q_1}(x_i)=0,\ k_1+1\leq i\leq k,$ that means
$$d(x_i)=0,\ k_1+1\leq i\leq k.$$

Since $d(x_j)=d_{L,L}(x_j)+d_{L,Q_1}(x_j)$ for $k_1+1\leq i\leq k,\ 1\leq j\leq k_1$ we have

$0=d([e_i,x_j])=[d(e_i),x_j]+[e_i,d(x_j)]$

\quad$=[\theta_{i,i}e_i+d_{J,N^2}(e_i),x_j]+[e_i,d_{L,L}(x_j)]
+[e_i,d_{L,Q_1}(x_j)]$

\quad$=[d_{J,N^2}(e_i),x_j]+[e_i,d_{L,L}(x_j)]+[e_i,d_{L,Q_1}(x_j)].$\\
This implies  $[e_i,d_{L,Q_1}(x_j)]=0,$ and therefore, $d_{L,Q_1}(x_j)=0$, that is, $$d(x_j)=d_{L,L}(x_j),\ 1\leq j\leq k_1$$
and
\begin{equation}\label{co12}\begin{array}{c}
[d_{J,N^2}(e_i),x_j]=-[e_i,d_{L,L}(x_j)],\ k_1+1\leq i\leq k,\ 1\leq j\leq k_1.\\[1mm]
\end{array}\end{equation}

Adding up the equalities (\ref{co7}) and (\ref{co12}) for possible values of $j$, then subtracting (\ref{co6}) from the result we obtain
$$d_{J,N^2}(e_i)-\sum\limits_{j=1}^{k}[d_{J,N^2}(e_i),x_j]=\sum\limits_{j=1}^{k_1}[e_i,d_{L,L}(x_j)],\ k_1+1\leq i\leq k.$$
Due to Remark \ref{rem15} we conclude that the components of
$d_{J,N^2}(e_i),\  k_1+1\leq i\leq k$ are expressed via structure
constants of the algebra $R$ and the components of
$d_{L,L}$.

Finally, applying the result of Case 1 and the relations obtained above we figure out the following argument
$$dimDer(R)\leq dimDer(L)+k-k_1=dim Inner(L)+k-k_1=dimInner(R),$$
which completes the proof of the theorem.
\end{itemize}
\end{proof}


Note that the completeness of the algebra $R(NF_n)$ from Example \ref{exam49} has been proved earlier in \cite{Ancochea} (Proposition 1). But now this immediately follows from Theorem \ref{thm53}.


In fact, earlier the notion of a complete (left) Leibniz algebra has been introduced in \cite{Ernie}. Since the present paper deals with the right Leibniz algebras the adapted version of the completeness for the right Leibniz algebras is as follows.

\begin{defn} \label{defnErnie} A Leibniz algebra $L$ is said to be complete if
\begin{itemize}
\item[(1)] $Center(L/I) = {0}$;
\item[(2)] for every derivation $d\in Der(L)$ there exists $x\in L$ such that $Im(d-R_x)\subseteq I$,
\end{itemize}
where $I=id<[x,x] \ | \ x\in L>$.
\end{defn}

Note that a complete Leibniz algebra in the sense of Definition \ref{defncomplete} also is complete in the sense of Definition \ref{defnErnie} and these two notions are not equivalent. Indeed, by Remark in \cite{Ernie} a semisimple Leibniz algebra is complete in the sense of Definition \ref{defnErnie}, whereas a semisimple Leibniz algebra need not be complete in the sense of Definition \ref{defncomplete} (see Example 3.4 in \cite{Ernie}). So, it seems that completeness in the sense of \cite{Ernie} is more adequate. But another classical result on the existence of outer derivations of nilpotent Lie algebra implies that nilpotent Lie algebras are not complete. This result is not true for nilpotent Leibniz algebra in the sense of completeness given in \cite{Ernie} (see Example 3.8 in \cite{Ernie}). We anticipate this result for nilpotent Leibniz algebras under Definition \ref{defncomplete}.

\end{document}